\numberwithin{equation}{section}
\theoremstyle{plain}
\newtheorem{thm}{Theorem}[section]
\newtheorem{prop}[thm]{Proposition}
\newtheorem{cor}[thm]{Corollary}
\newtheorem{lem}[thm]{Lemma}
\newtheorem{theorem*}{Theorem}[]
\theoremstyle{definition}
\newtheorem{example}[thm]{Example}
\theoremstyle{remark}
\newtheorem{rem}[thm]{Remark}
\newcommand{\C}{\mathbb{C}}
\newcommand{\N}{\mathbb{N}}
\newcommand{\R}{\mathbb{R}}
\newcommand{\Z}{\mathbb{Z}}
\newcommand{\Sch}{\mathbf{Sch} (\R)}
\newcommand{\HCat}{Ho\, \mathcal{C}}
\newcommand{\proj}{\mathbb{P}}
\newcommand{\inv}{^{-1}}
\DeclareMathOperator{\im}{im}
\DeclareMathOperator{\id}{id}
\DeclareMathOperator{\Ker}{Ker}
\newcommand{\Xb}{\overline X}
\newcommand{\Yb}{\overline Y}
\newcommand{\Zb}{\overline Z}
\newcommand{\fb}{\overline f}
\newcommand{\I}{\mathcal I}
\newcommand{\w}{\widetilde}
\DeclareMathOperator{\simple}{\mathbf s}
\date{August 21, 2012}
\title[Weight filtration II]{The weight filtration for real algebraic varieties II: Classical Homology}
\author[C. McCrory and A. Parusi\' nski]{ Clint McCrory and Adam Parusi\'nski }
\address {Mathematics Department, University of Georgia, Athens GA
30602, USA}
\email{clint@math.uga.edu}
\address {D\'epartement de Math\'ematiques,
Universit\'e de Nice - Sophia Antipolis,
Parc Valrose,
06108 Nice Cedex 02,
France}
\email{adam.parusinski@unice.fr}
\begin{document}

 \begin{abstract}
We associate to each real algebraic variety a filtered chain complex, the weight complex, which is well-defined up to filtered quasi-isomorphism, and which induces on classical (compactly supported) homology with $\Z_2$ coefficients an analog of the weight filtration for complex algebraic varieties. This complements our previous definition of the weight filtration of Borel-Moore homology.
 \end{abstract}
 
\subjclass[2000]{Primary: 14P25. Secondary: 14P10}


\maketitle

We define the weight filtration of the homology  of a real algebraic variety by first addressing the case of smooth noncompact varieties. As in Deligne's definition \cite{deligne2} of the weight filtration for complex varieties, given a smooth variety $X$ we consider a \emph{good compactification}, a smooth compactification $\Xb$ of $X$ such that $D = \Xb\setminus X$ is a divisor with normal crossings. Whereas Deligne's construction can be interpreted in terms of the action of a torus $(S^1)^N$, we use the action of a discrete torus $(S^0)^N$ to define a filtration of the chains of a semialgebraic compactification of $X$ associated to the divisor $D$. The resulting filtered chain complex is functorial for pairs $(\Xb,X)$ as above, and it behaves nicely for a blowup with a smooth center that has normal crossings with $D$.

We apply a result of Guill\' en and Navarro Aznar \cite[Theorem (2.3.6)]{navarro} to show that our filtered complex is independent of the good compactification of $X$ (up to quasi-isomorphism) and to extend our definition to a functorial filtered complex, the \emph{weight complex}, that is defined for all varieties and enjoys a generalized blowup property (Theorem \ref{navarro}). For compact varieties the weight complex agrees with our previous definition  \cite{weight} for Borel-Moore homology.

We work with homology rather than cohomology to take advantage of the topology of semialgebraic chains \cite[Appendix]{weight}. We denote by $H_k(X)$ the $k$th classical homology group of $X$, with compact supports and coefficients in $\Z_2$, the integers modulo 2. The vector space $H_k(X)$ is dual to $H^k(X)$, the classical $k$th cohomology group with closed supports. On the other hand, let  $H^{BM}_k(X)$ denote the $k$th Borel-Moore homology group of $X$ (\emph{i.e.} homology with closed supports) with coefficients in $\Z_2$. Then $H^{BM}_k(X)$ is dual to $H^k_c(X)$, the $k$th cohomology group with compact supports.

Our work owes much to the foundational paper \cite{navarro} of Guill\' en and Navarro Aznar. In particular we have been influenced by the viewpoint of section 5 of that paper, on the theory of motives. Using Guill\' en and Navarro Aznar's extension theorems, Totaro \cite{totaro} observed that there is a functorial weight filtration for the cohomology with compact supports of a real analytic variety with a given compactification. In \cite{weight} we developed this theory in detail for real algebraic varieties, working with Borel-Moore homology. Our task was simplified by the strong additivity property of Borel-Moore homology (or compactly supported cohomology) \cite[Theorem 1.1]{weight}. For classical homology or cohomology one does not have such an additivity property, and so the present construction of the weight filtration is more involved.

In section \ref{weightsmoothsection} below we define the weight filtration of a smooth, possibly noncompact, variety $X$, in terms of a good compactification $\Xb$ with divisor $D$ at infinity. First we define a semialgebraic compactification $X'$, the \emph{corner compactification} of $X$, with $X'$ contained in a principal bundle over $\Xb$ with group a discrete torus $\{1,-1\}^N$. We use the action of this group to define the \emph{corner filtration} of the semialgebraic chain group of $X'$. The filtered \emph{weight complex} is obtained from the corner filtration by an algebraic construction, the \emph{Deligne shift}. In section \ref{CechGysinsection} we analyze the relation of the weight complex to the homological \emph {Gysin complex} of the divisor $D$. Section \ref{functorialitysection} contains the proof of the crucial fact that the weight complex is functorial for pairs $(\Xb,X)$, together with an analysis of the functoriality of the Gysin complex. 

Sections \ref{BlowupSquaresection}, \ref{Blowuptransversesection}, and \ref{Blowupcontainedsection} treat the blowup properties of the weight complex of a smooth variety.  A key role is played by the Gysin complex. For example, in section \ref{Blowupcontainedsection} we use the fact that a homomorphism of weight complexes is a filtered quasi-isomorphism if and only if it induces an isomorphism of the homology of the corresponding Gysin complexes.

In section \ref{weightsingularsection} we use the theorems of Guill\' en and Navarro Aznar to extend the definition of the weight complex to singular varieties, and we describe some elementary examples. The appendix (section \ref{appendix}) is devoted to a canonical filtration of the $\Z_2$ group algebra of a discrete torus group. This is in effect a local version of the weight filtration.

By a \emph{real algebraic variety} we mean an affine real algebraic variety in the sense of Bochnak-Coste-Roy \cite{BCR}:  
a topological space with a sheaf of real-valued functions  isomorphic to a real algebraic set $X\subset \R^N$ with the Zariski topology and the structure sheaf of regular functions.  A \emph{regular function on $X$}  is the restriction of a rational function on $\R^N$ that is 
everywhere defined on $X$.  By a \emph{regular mapping} we mean a regular mapping in the sense  of Bochnak-Coste-Roy \cite{BCR}.  

For instance,  the set of real points of a reduced projective scheme over $\R$,  with the sheaf of regular functions, is an affine real algebraic variety in this sense.  
This follows from the fact that  real projective space is isomorphic, as a real algebraic variety, to a subvariety 
of an affine space  \cite[Theorem 3.4.4] {BCR}.  
We also adopt from \cite{BCR} the notion of an algebraic vector bundle.  We recall that such a bundle is, by definition, a subbundle of a trivial vector bundle, and hence it is the pullback of the universal vector bundle on the Grassmannian,  and its  fibers  are generated by global regular sections \cite[Chapter 12]{BCR}.

 By a \emph{smooth real algebraic variety} we mean a nonsingular  affine real algebraic variety.


\section{The weight filtration of a smooth variety}\label{weightsmoothsection}

In this section we define the weight filtration of the classical homology of a smooth variety $X$. We use a smooth compactification $\Xb$ with a normal crossing divisor at infinity to define a semialgebraic compactification $X'$ of $X$ and a surjective map $\pi:X'\to \Xb$ with finite fibers. This map is used to define the weight filtration of the semialgebraic chain complex of $X'$ with $\Z_2$ coefficients. Thus we obtain the weight filtration of the homology of $X'$, which is canonically isomorphic to the homology of $X$. We will prove in Section \ref{weightsingularsection} that this filtration of $H_*(X)$ does not depend on the choice of compactification $\Xb$.

\subsection{The corner compactification}
Let $M$ be a compact smooth   real algebraic variety and let $D\subset M$ be a smooth divisor. Associated to $D$ there is an algebraic line bundle $L$ over $M$ that has a section $s$ such that $D$ is the variety of zeroes of $s$. Let $S(L)$ be the space of oriented directions in the fibers of $L$.  
 It can be given the structure of a real algebraic variety as follows.  By \cite[Remark 12.2.5]{BCR}, $L$ is isomorphic to an algebraic subbundle of the trivial bundle $M\times \R^N$.  Denote by $\Psi:L\to \R^N$ the regular map  defined by this isomorphism.   The scalar product on $\R^N$ defines  a regular metric on $L$.  We identify  $S(L)$ with   the unit zero-sphere bundle of $L$; that is, with the real  algebraic variety $\Psi \inv (S^{N-1})$.   This structure is uniquely defined.  Indeed,  the standard projection $L\setminus M \to \Psi \inv (S^{N-1})$ is a regular map, and therefore 
two such unit sphere bundles are biregularly isomorphic.  Finally, $L$ is the pullback of the universal 
line bundle on $\proj^{N-1}$ under the regular map $M \to \proj^{N-1}$ induced by $\Psi$. 

Thus   $S(L)$ is a smooth real algebraic variety,  and the projection $\pi_L:S(L)\to M$ is an algebraic double covering. 
 Now the subvariety $\pi_L\inv D$ of $S(L)$ is the zero set of the  regular function $\varphi:S(L)\to \R$ defined by $\varphi(x,\ell)\cdot \ell = s(x)$, where $x\in M$ and $\ell$ is a unit vector in the fiber $L_x=\pi_L\inv(x)$. Note that the generator $\tau$ of the group of covering transformations of $S(L)$ changes the sign of $\varphi$, for $\varphi(\tau(x,\ell)) = \varphi(x,-\ell) = -\varphi(x,\ell)$.

Let $X$ be a smooth $n$-dimensional variety, and let $\Xb$ be a \emph{good compactification} of $X$ 
\cite[p.~89]{peterssteenbrink}: $\Xb$ is a compact smooth variety containing $X$, 
and $D = \Xb\setminus X$ is a divisor with simple normal crossings.  
Thus $D$ is a finite union of  smooth codimension one subvarieties $D_i$ of $\Xb$,
\begin{equation}\label{ncd}
D = \bigcup_{i\in I} D_i\ ,
\end{equation}
and the divisors $D_i$ meet transversely. Note that we do not assume that the divisors $D_i$ are irreducible.

For $i \in I$, let $L_i$ be the line bundle on $\Xb$ associated to $D_i$ and let $s_i$ be a section of $L_i$ that defines the divisor $D_i$. Let $\w \pi:\w X\to \Xb$ be the covering of degree $2^{|I|}$ defined as the fiber product of the double covers $\pi_{L_i}:S(L_i)\to \Xb$, and let $\w\varphi_i:\w X\to \R$ be the pullback of the function $\varphi_i:S(L_i)\to \R$ corresponding to the section $s_i$, so that the variety $\w\pi\inv D_i$ is the zero space of $\w\varphi_i$. The \emph{corner compactification} of $X$ associated to the good compactification $(\Xb,D)$ is the semialgebraic set $X'\subset \w X$ defined by
\begin{equation}\label{X'}
X'= \text{Closure}\{\w x\in \w X\ |\ \w\varphi_i(\w x)>0,\ i \in I \}.
\end{equation}
In the terminology of \cite[\S 3.2]{parusinski}, $X'$ is the variety $\Xb$ \emph{cut along the divisor} $D$. Let $\pi:X'\to \Xb$ be the restriction of the covering map $\w\pi:\w X\to \Xb$. 

Let $T$ be the group of covering transformations of the covering space $\w\pi:\w X\to \Xb$, with $\w\tau_i\in T$ the pullback of the nontrivial covering transformation $\tau_i$ of the double cover  $\pi_{L_i}:S(L_i)\to \Xb$. There is a canonical isomorphism $\theta:T\to G$, where $G$ is the multiplicative group of functions $g:I\to \{1,-1\}$, given by $\theta(\w\tau_i) = g_i$, with $g_i(i) = -1$  and $g_i(j) = 1$ for $i\neq j$. To emphasize the role of the group $G$ we prefer to consider
\begin{equation}\label{xi}
\xi=(\w\pi:\w X\to \Xb)
\end{equation} 
as a \emph{principal $G$-bundle} for the group $G=\{1,-1\}^I$ and then 
\begin{equation}\label{actiononsections}
\begin{aligned}
\w\varphi_i(g_i\cdot \w x) &= - \w\varphi_i(\w x),\\
\w\varphi_j(g_i\cdot \w x) &= \w\varphi_j(\w x),\ i\neq j.
\end{aligned}
\end{equation}  
If $U\subset \Xb$ is contractible then $\xi|U$ is trivial, i.e. 
\begin{equation}\label{trivial1}
\begin{aligned}
\w \pi \inv (U) \simeq U\times G
\end{aligned}
\end{equation}
as a  principal $G$-bundle. This isomorphism is uniquely defined by a choice of $x\in U$ and a point $\w x \in \w \pi \inv (x)$, which   we identify via \eqref{trivial1} with $(x, 1)\in U\times G$.   

\begin{prop}\label{localcoordinates}
The  semialgebraic map $\pi:X'\to \Xb$ is surjective. If $x\in \Xb$ let $J(x) = \{i\in I\ |\ x\in D_i\}$ and $G(x) = \{g\in G\ |\ g(i) = 1,\ i\notin J(x)\}$. The fiber $\pi\inv(x)= \{ \w x \in \w \pi \inv (x) \ |\ \w \varphi_i(\w x) >0 \text { for } i \in J(x)\}$.  
Thus $\pi\inv(x)$ is a regular orbit of the action of $G(x)$ on $\w X$;  i.e., a $G(x)$-torsor. Hence the number of points in $\pi\inv(x)$ is $2^{|J(x)|}$.
\end{prop}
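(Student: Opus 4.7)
The plan is to reduce everything to a local analysis around a point $x\in \Xb$, working in coordinates adapted to the normal crossings divisor $D$ and using the local trivialization \eqref{trivial1} of the principal $G$-bundle $\xi$. A minor point first: as written, the condition "$\w\varphi_i(\w x)>0$ for $i \in J(x)$" cannot be met (for $i\in J(x)$ we have $x\in D_i$, so $s_i(x)=0$ and hence $\w\varphi_i\equiv 0$ on $\w\pi\inv(x)$), so I read the statement as
\[
\pi\inv(x) \;=\; \{\,\w x \in \w\pi\inv(x)\ |\ \w\varphi_i(\w x)>0 \text{ for } i\in I\setminus J(x)\,\}.
\]

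\textbf{Step 1 (surjectivity).} For $x\in X$ we have $J(x)=\emptyset$, so no $\w\varphi_i$ vanishes on $\w\pi\inv(x)$; given any lift, the $G$-action flips each sign independently by \eqref{actiononsections}, so some lift has all $\w\varphi_i>0$ and lies in $X'$. For $x\in D$, since $X$ is dense in $\Xb$, choose a sequence $x_n\to x$ in $X$ and lifts $\w x_n$ with all $\w\varphi_i(\w x_n)>0$; extract a convergent subsequence by compactness of $\w X$. The limit lies in $\pi\inv(x)\cap X'$.

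\textbf{Step 2 (fiber description, the main step).} Fix $x\in\Xb$ and choose a contractible neighborhood $U$ of $x$ and local coordinates on which $D_i\cap U=\{y_i=0\}$ for $i\in J(x)$, with algebraic trivializations $L_i|_U\cong U\times\R$ under which $s_i = y_i\, e_i$ for $i\in J(x)$ and $s_i=u_i\, e_i$ with $u_i$ nonvanishing for $i\notin J(x)$. Trivialize $\xi|_U\cong U\times G$ via \eqref{trivial1} so that, in these coordinates, $\w\varphi_i(y,g)=g(i)\,\w\varphi_i(y,\1)$, where $\w\varphi_i(\cdot,\1)$ has the same sign as $y_i$ (resp.\ $u_i$) on $U$. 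For $\w x\in\pi\inv(x)$, take a sequence $\w x_n\to\w x$ with all $\w\varphi_i(\w x_n)>0$; continuity gives $\w\varphi_i(\w x)\geq 0$, with equality iff $i\in J(x)$. Conversely, given $\w x=(x,g)\in\w\pi\inv(x)$ satisfying the stated positivity for $i\notin J(x)$, construct an approximating sequence $(y_n,g)$ with $y_n\to x$ by choosing the sign of $y_{n,i}$ equal to $g(i)$ for $i\in J(x)$ (possible since $U\setminus D$ contains points of all coordinate signs). For $i\notin J(x)$ the function $\w\varphi_i(y_n,g)$ stays close to $\w\varphi_i(x,g)>0$; for $i\in J(x)$ it equals $g(i)\cdot y_{n,i}\cdot(\text{positive factor})>0$ by the sign choice. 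Hence $(y_n,g)$ lies in the open set of \eqref{X'} and $\w x$ is in its closure.

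\textbf{Step 3 (cardinality and torsor structure).} The description in Step 2 exhibits $\pi\inv(x)$ as the subset of $\w\pi\inv(x)$ cut out by the $|I\setminus J(x)|$ independent sign conditions $\w\varphi_i>0$, hence has cardinality $2^{|I|}/2^{|I\setminus J(x)|}=2^{|J(x)|}$. By \eqref{actiononsections}, the subgroup $G(x)$ preserves the sign of $\w\varphi_i$ for $i\notin J(x)$, so it acts on $\pi\inv(x)$; this action is free because $G$ acts freely on $\w X$, and an order count forces it to be transitive, making $\pi\inv(x)$ a $G(x)$-torsor.

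\textbf{Expected obstacle.} The only genuinely non-formal point is the reverse inclusion in Step 2: one must produce, from the abstract sign data on $\w x$, an actual sequence in the open locus of \eqref{X'} converging to $\w x$. This is exactly where the local form of the sections $s_i$ at the normal crossing, and the corresponding identification of $\w\varphi_i$ with a coordinate up to a positive factor, is used; the surjectivity assertion and the torsor count both follow essentially formally from this local picture.
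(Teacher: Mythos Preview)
Your proof is correct and follows essentially the same strategy as the paper: both reduce to a local analysis in coordinates adapted to the normal crossings divisor, using the trivialization \eqref{trivial1} of $\xi|_U$ to identify $\pi^{-1}(x)$ with $\{x\}\times G(x)$; the paper phrases this via the explicit decomposition $X\cap U=\bigcup_{g\in G(x)} X_g(U)$ into sign chambers rather than via your sequence-and-closure argument, but the content is the same. You are also right that the displayed description of $\pi^{-1}(x)$ contains a typo---it should read $i\notin J(x)$, since $\w\varphi_i$ vanishes identically on $\w\pi^{-1}(x)$ for $i\in J(x)$---and the paper's own proof confirms this reading, arriving at $\pi^{-1}(x)=\{x\}\times G(x)$ (i.e.\ the points where the $|I\setminus J(x)|$ nonvanishing signs are positive).
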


\begin{proof}
If $x\in X = \Xb\setminus D$, then $J(x) = \emptyset$ and $G(x)$ is trivial.  
For each $i\in I$ let $\ell_i(x) \in (L_i)_x$ be the unit vector such that $s_i(x)$ is a positive multiple of $\ell_i(x)$; that is, $\varphi_i(x,\ell_i(x))> 0$. Let $\w x = (x,\ell_i(x))_{i\in I}\in \w X$. Then by definition $\w x\in X'$ and $\pi\inv(x) = \{\w x\}$. If $U$ is a contractible, open neighborhood of $x$ in $X$ then the principal bundle $\xi|U$ is trivial, and $\pi\inv(U)$ is a connected component of 
$\w \pi\inv(U)$.  Denote 
\begin{equation}\label{X'+}
X_+'= \{\w x\in \w X\ |\ \w\varphi_i(\w x)>0,\ i \in I \}.
\end{equation}
Thus   $\pi\inv X = X'_+$,  and $\pi$ maps $X_+'$ homeomorphically onto $X$.

Since the divisor $D$ has simple normal crossings \eqref{ncd}, it follows that for every $x\in \Xb$ there is a regular system of  parameters $u_1,\dots, u_n$ for $\Xb$ at $x$, and a semialgebraic open neighborhood $U$ of $x$ such that $(u_1,\dots,u_n)$ is a real analytic, semialgebraic coordinate system on $U$ with $(u_1(x),\dots, u_n(x))=0$, and for each $i\in J(x)$ there is an index $k(i)\in \{1,\dots,n\}$ such that $D_i\cap U$ is the coordinate hyperplane $u_{k(i)}=0$ and 
$$
X\cap U = \{y\in U\ |\ u_{k(i)}(y)\neq 0\ \text{for all}\ i\in J(x) \}.
$$
Then 
\begin{equation}\label{XgU}
\begin{aligned}
X\cap U &= \bigcup_{g\in G(x)} X_g(U),\\
X_g(U) &=  \{y\in U\ |\ g(i)u_{k(i)}(y)> 0\ \text{for all}\ i\in J(x)\},
\end{aligned}
\end{equation}
the set of points of $U$ such that each of the coordinates $u_{k(i)}$ has the sign $g(i)$.

 We say that $(U, (u_1,\dots,u_n))$ is a \emph{good local coordinate system} on $(\Xb, D)$ at $x\in \Xb$ if, moreover, $U$ and all $X_g(U)$ for $g\in G(x)$ are contractible.  Thus $X\cap U$ has exactly $2^{|J(x)|}$ connected components.

Let $y\in X_1(U)$, \emph{i.e.} $u_{k(i)}(y)> 0$ for all $i\in J(x)$, and 
 let $\w U_1$ be the component of $\w\pi\inv U$ containing $\pi\inv X_1(U)$.   We choose the isomorphism in 
 \eqref{trivial1} so that $\w U_1$ corresponds to $U\times \{1\}$.  Then, by \eqref{actiononsections} ,
 \begin{align*}
 \pi\inv X_g(U) = \w \pi\inv X_g(U) \cap g \w U_1.  
 \end{align*}
 In other words  $\pi\inv X_g(U) $ corresponds to  $X_g (U) \times \{g\}$ via the isomorphism  \eqref{trivial1}.  
 In particular, $\pi \inv (x) = \{x\}\times G(x)$ as claimed.    
\end{proof}

As a corollary of the proof we have that every $x'\in X'$ has a neighborhood in $X'$ semialgebraically 
 homeomorphic to a quadrant $\{(u_1,\dots,u_n)\ |\ u_i\geq 0,\ i=1,\dots,m\} \subset \R^n$, where $m=|J(\pi(x'))|$. Thus $X'$ is a semialgebraic manifold with boundary $\partial X'$, and $X'\setminus \partial X' = X'_+$. The inclusion $X\hookrightarrow \Xb$ factors through $\pi$,
$$
\renewcommand{\arraystretch}{1.5}
\begin{array}[c]{ccc} 
& & X' \\
  &\nearrow &\hphantom{\pi}\downarrow{\pi} \\
X & \hookrightarrow & \Xb
\end{array}
$$
and the restriction of $\pi$ to  $X'\setminus \partial X'= X'_+$ is a semialgebraic homeomorphism onto $X$. Thus the inclusion $\lambda:X\to X'$ is a homotopy equivalence, and so $\lambda_*:H_k(X)\to H_k(X')$ is an isomorphism for all $k\geq 0$, where $H_k(X)$ denotes classical homology (with compact supports) with coefficients in $\Z_2$.

\begin{prop}The corner compactification $X'$ of $X$ does not depend  on the choice of sections $s_i$. \end{prop}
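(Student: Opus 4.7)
The plan is to show that any two systems of defining sections yield corner compactifications canonically identified by a semialgebraic automorphism of $\w X$ arising from the fiberwise $G$-action. Let $\{s_i\}_{i\in I}$ and $\{s_i'\}_{i\in I}$ be two systems of regular sections of the line bundles $L_i$ on $\Xb$, each vanishing exactly on $D_i$. Since $D_i$ is a smooth divisor, both $s_i$ and $s_i'$ vanish to order $1$ along $D_i$, so their ratio $f_i := s_i'/s_i$ is a regular function on $\Xb\setminus D_i$ that extends regularly and without zeroes across $D_i$. Compactness of $\Xb$ then forces $\epsilon_i := \sgn f_i$ to be a locally constant function $\Xb\to\{1,-1\}$.

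Next, I would assemble these signs into a locally constant map $h:\Xb\to G$ by setting $h(x)(i) := \epsilon_i(x)$, and use the principal $G$-bundle structure \eqref{xi} to define the semialgebraic map
\[
\Phi:\w X\longrightarrow \w X,\qquad \Phi(\w x) := h(\w\pi(\w x))\cdot \w x,
\]
which lies over the identity on $\Xb$. Because $h$ is locally constant, on the preimage of each connected component of $\Xb$ the map $\Phi$ is the action of a single element of $G$; hence $\Phi$ is a regular automorphism of the $G$-bundle $\xi$, and in particular a semialgebraic homeomorphism of $\w X$.

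To finish, I would combine the identity $\w\varphi_i'(\w x) = f_i(\w\pi(\w x))\,\w\varphi_i(\w x)$, which is immediate from $\varphi_i(x,\ell)\,\ell = s_i(x)$ and $\varphi_i'(x,\ell)\,\ell = s_i'(x)$, with the action relations \eqref{actiononsections} to compute
\[
\w\varphi_i'(\Phi(\w x)) = f_i(\w\pi(\w x))\,\epsilon_i(\w\pi(\w x))\,\w\varphi_i(\w x) = |f_i(\w\pi(\w x))|\,\w\varphi_i(\w x),
\]
which has the same sign as $\w\varphi_i(\w x)$. Thus $\Phi$ carries the open locus $\{\w x\in\w X\ |\ \w\varphi_i(\w x)>0,\ i\in I\}$ associated to $\{s_i\}$ onto the corresponding locus for $\{s_i'\}$, and by continuity its restriction to the closure in \eqref{X'} is the desired semialgebraic homeomorphism of the two corner compactifications over $\Xb$. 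The only subtlety is that $f_i$ may take different signs on different connected components of $\Xb$; this is precisely what forces $h$ to be locally constant rather than globally constant, and it is absorbed into the fiberwise $G$-action used to define $\Phi$.
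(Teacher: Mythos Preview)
Your proof is correct and follows essentially the same idea as the paper's: the two corner compactifications are related by the fiberwise $G$-action on $\w X$. The paper argues one index at a time and, for a fixed $j$, observes that either $\widehat X'=X'$ or $\widehat X'=g_j(X')$ according to whether $s_j$ and $\hat s_j$ lie in the same or opposite half-lines of $L_x\setminus\{0\}$; you instead package all indices together into a locally constant $h:\Xb\to G$ and write down the global bundle automorphism $\Phi$ explicitly. Your formulation has the minor advantage of handling a disconnected $\Xb$ cleanly (where the sign $\epsilon_i$ may vary across components), a case the paper's dichotomy glosses over, but the underlying mechanism is identical.
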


\begin{proof} Suppose that for all $i\in I$ we have sections $s_i$ and $\hat s_i$ of $L_i$ defining $D_i$, and these sets of sections define corner compactifications $X'$ and $\widehat X'$, respectively. Suppose there is an index $j\in I$ such that $s_i = \hat s_i$ for all $i\neq j$. If $s_j(x)$ and $\hat s_j(x)$ lie in the same component of the fiber $L_x\setminus \{0\}$ for $x\notin D_j$, then the corresponding functions $\varphi_j$ and $\hat\varphi_j$ have the same sign, so $X'=\widehat X'$.  If $s_j(x)$ and $\hat s_j(x)$ lie in different components of the fiber $L_x\setminus \{0\}$ for $x\notin D_j$, then the corresponding functions $\varphi_j$ and $\hat\varphi_j$ have opposite signs. Thus $g_j(X')=\widehat X'$. \end{proof}

\begin{prop} The corner compactification $X'$ does not depend on the choice of decomposition \eqref{ncd} of the divisor $D$ into smooth subvarieties; that is, two such compactifications are canonically semialgebraically homeomorphic. \end{prop}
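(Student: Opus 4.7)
Two smooth decompositions of $D$ admit a canonical common refinement: since each smooth subvariety $D_i$ has disjoint irreducible components, both given decompositions coarsen the decomposition of $D$ into its irreducible components, and the latter is itself smooth with normal crossings (disjointness of the pieces within a given $D_i$ together with the original transversality between $D_i$ and $D_{i'}$). It therefore suffices to construct, for any refinement $\{D_{i,\alpha}\}_{i\in I,\,\alpha\in A_i}$ of $\{D_i\}_{i\in I}$ with $D_i=\bigsqcup_\alpha D_{i,\alpha}$, a canonical semialgebraic homeomorphism $\Phi:X'_{\mathrm{ref}}\to X'$ between the associated corner compactifications, and to check that these maps are compatible with further refinement.

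To construct $\Phi$, note that $D_i=\sum_\alpha D_{i,\alpha}$ as Cartier divisors gives a canonical identification $L_i=\bigotimes_\alpha L_{i,\alpha}$; by the previous proposition I may assume $s_i=\bigotimes_\alpha s_{i,\alpha}$. Taking unit vectors, this identification yields a canonical regular surjection $\w X_{\mathrm{ref}}\to\w X$ of principal bundles covering the multiplication homomorphism $G_{\mathrm{ref}}=\{1,-1\}^{I'}\to G=\{1,-1\}^I$, where $I'=\bigsqcup_i A_i$. Under this map $\w\varphi_i$ pulls back to $\prod_\alpha\w\varphi_{i,\alpha}$, so the positivity locus \eqref{X'+} for the refined data lands in the positivity locus for the original; passing to closures yields the desired continuous semialgebraic map $\Phi$.

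To see that $\Phi$ is a homeomorphism, fix $x\in D$. Since the pieces of each $D_i$ are disjoint, for every $i\in J(x)$ there is a unique $\alpha(i)\in A_i$ with $x\in D_{i,\alpha(i)}$; thus $|J_{\mathrm{ref}}(x)|=|J(x)|$ and the multiplication map restricts to a bijection $G_{\mathrm{ref}}(x)\xrightarrow{\sim}G(x)$. Choose a good local coordinate system $(U,(u_1,\dots,u_n))$ at $x$ small enough that $D_{i,\alpha}\cap U=\emptyset$ for $\alpha\neq\alpha(i)$; it is then a good local coordinate system for both decompositions simultaneously, with the same distinguished coordinate $u_{k(i)}$ cutting out both $D_i$ and $D_{i,\alpha(i)}$. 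Proposition \ref{localcoordinates} realizes $\pi\inv(U)$ as $\bigsqcup_{g\in G(x)}\overline{X_g(U)}\times\{g\}$ and $\pi_{\mathrm{ref}}\inv(U)$ as an analogous disjoint union over $G_{\mathrm{ref}}(x)$ of the same closed quadrants; $\Phi$ sends each $g'$-quadrant isomorphically to the corresponding $g$-quadrant via the identity on $U$. Hence $\Phi$ is locally a homeomorphism and a bijection on every $\pi$-fiber, and as a continuous map of compact semialgebraic sets it is a semialgebraic homeomorphism.

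Canonicity follows because $\Phi$ is built from the canonical identification $L_i=\bigotimes_\alpha L_{i,\alpha}$ and is independent of the $s_{i,\alpha}$ by the previous proposition; transitivity of refinement identifies any two smooth decompositions via the triangle through the common irreducible refinement. The main obstacle is Step 3, i.e., matching the quadrant trivializations of the two corner compactifications at a boundary point so as to verify local bijectivity; once the principal-bundle comparison is set up, the rest of the argument is bookkeeping.
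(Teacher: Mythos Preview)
Your argument is correct and follows the same core construction as the paper: both identify $L_i$ with the tensor product of the line bundles of its pieces, build the resulting surjection of principal bundles $\w X_{\mathrm{ref}}\to\w X$, observe that $\w\varphi_i$ pulls back to the product of the refined $\w\varphi_{i,\alpha}$, and conclude that the closure of the positivity locus maps to the closure of the positivity locus.

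The differences are organizational. The paper treats the elementary case of splitting a single $D_j$ into two disjoint pieces $D_a\sqcup D_b$ and verifies bijectivity only on the open dense set $X'\setminus\partial X'$, by explicitly examining the two preimages of an interior point and checking that exactly one satisfies the positivity conditions. You instead (i) pass at once to the common irreducible refinement, which cleanly reduces the comparison of any two decompositions to a single functorial map, and (ii) verify bijectivity fiber-by-fiber over every $x\in\Xb$, using the observation that disjointness of the pieces forces $|J_{\mathrm{ref}}(x)|=|J(x)|$ and hence $G_{\mathrm{ref}}(x)\xrightarrow{\sim}G(x)$. Your verification is arguably more transparent at the boundary, where the paper's ``bijection on the interior suffices'' relies implicitly on the closure/covering structure; your approach trades that for invoking the local coordinate description of Proposition~\ref{localcoordinates} at every point.
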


\begin{proof} Suppose that the divisor $D_j$ is the union of two nonempty smooth divisors $D_a$ and $D_b$, $D_a\cap D_b = \emptyset$, and we replace $D_j$ with $D_a\cup D_b$ in the decomposition \eqref{ncd}. Then the line bundle $L_j$ equals $L_a\otimes L_b$, and we can take $s_j= s_a\otimes s_b$.   If we choose the metric 
 on $L_j$ to be the product of the metrics on $L_a$ and $L_b$, then $\varphi_j = \varphi_a\cdot \varphi_b$, \emph{i.e.} $\varphi_j(x,\ell_a\otimes \ell_b) = \varphi_a(x,\ell_a) \varphi_b(x,\ell_b)$, and we have a double cover $S(L_a)\times_{\Xb}S(L_b)\to S(L_j)$ given by $((x,\ell_a),(x,\ell_b))\mapsto (x,\ell_a\otimes \ell_b)$. Let $\w X(j)$ be the fiber product of the double covers $S(L_i)$ with $L_j$ replaced by $L_a$ and $L_b$, and let $ X'(j)$ be the resulting corner compactification. Then the double cover $p:\w X(j)\to \w X$ restricts to a semialgebraic homeomorphism $X'(j)\to X'$. To prove this it suffices to show that $p$ restricts to a bijection $X'(j)\setminus\partial X'(j)\to X'\setminus\partial X'$. Suppose that $\w x = (x,\ell_i)_{i\in I}\in X'\setminus\partial X'$. Then $\w \varphi_i(x,\ell_i)> 0$ for all $i\in I$, and in particular $\varphi(x,\ell_j)>0$. Let $\ell_j = \ell_a\otimes \ell_b$, so that $\varphi_j(x,\ell_j) = \varphi_a(x,\ell_a) \varphi_b(x,\ell_b)>0$. Now $p\inv(\w x)= \{\w y,\w z\}$, where $\w y$ is obtained from $\w x$ by replacing $(x,\ell_j)$ with $((x,\ell_a),(x,\ell_b))$ and $\w z$ is obtained from $\w x$ by replacing $(x,\ell_j)$ with $((x,-\ell_a),(x,-\ell_b))$. Thus if $\varphi_a(x,\ell_a)>0$ we have $\w y\in X'(j)$ and $\w z\notin X'(j)$, and if $\varphi_a(x,\ell_a)<0$ we have $\w y\notin X'(j)$ and $\w z\in X'(j)$. \end{proof}


\subsection{The corner filtration}
We will use the map $\pi:X'\to \Xb$ and the action of the group $G$ on $\w X$ to define a filtration of the semialgebraic chain complex $C_*(X')$ 
of the corner compactification $X'$. Given the decomposition \eqref{ncd} of $D$, for  $J\subset I$ we define 
\begin{equation}\label{DJ}
\begin{aligned}
D_J &= \bigcap_{i\in J} D_i\ ,\   D_\emptyset = \Xb,\\
\mathring D_J &=  D_J \setminus \bigcup_{i\notin J} D_i\ ,\ \mathring D_\emptyset = X .
\end{aligned}
\end{equation}
Then $\{\mathring D_J \}_{J\subset I}$ is a stratification of $\Xb$. This is a local condition that follows from the fact that every $x\in \Xb$ is contained in a good coordinate system $(U, (u_1,\dots,u_n))$, with 
$$
D\cap U = \{y\in U\ |\ u_{k(i)} = 0\ \text{for some}\ i\in J(x)\}.
$$ 
 In these coordinates, for $J\subset J(x)$ we have 
\begin{equation*}
\begin{aligned}
D_J\cap U &= \{y\in U\ |\ u_{k(i)}(y) = 0,\ i\in J\},\\
\mathring D_J\cap U &= (D_J\cap U) \cap \{y\in U\ |\ u_{k(i)}(y) \neq 0,\ i\in J(x) \setminus J\}.
\end{aligned}
\end{equation*}
Similarly we  can stratify $X'$ by taking $\pi \inv \mathring D_J$ as strata, and $\pi \inv D_J$ is the closure in $X'$ of the stratum $\pi \inv \mathring D_J$.  To prove these assertions, let $(U, (u_1,\dots,u_n))$ be a good coordinate system as above, and let $\w U_1$ be the component of $\w\pi\inv U$ containing $\pi\inv X_1(U)$. 
For $g\in G(x)$ let 
\begin{equation}\label{XbargU}\Xb_g(U) = \{y\in U\ |\ g(i) u_{k(i)}(y)\geq 0,\ i\in J(x)\},\end{equation}
 the closure in $U$ of $X_g(U)$.  
We have that $\w\pi$ maps $(g \w U_1, \pi\inv\Xb_g(U))$ homeomorphically onto $(U, \Xb_g(U))$, and $\pi\inv U$ is the disjoint union of the sets $\pi\inv\Xb_g(U)$.  Clearly $\{\mathring D_J\cap \Xb_g(U)\}_{J\subset J_0}$ is a stratification of $\Xb_g(U)$, and the closure of $\mathring D_J\cap \Xb_g(U)$ in $\Xb_g(U)$ is $ D_J\cap \Xb_g(U)$.

\vskip.1in

Now for each $J\subset I$ such that $D_J \ne \emptyset$,  let $G(J)=\{g\in G\ |\ g(i) = 1,\ i\notin J\}$.   
Then $G(J)$ is isomorphic to $\{1,-1\}^{|J|}$  and for each $x\in \mathring D_J$ we have $G(x) = G(J)$.   Thus the action of $G(J)$ on $\w X$ preserves $\pi\inv D_J$.  
 Consider the inclusion $C_* (\pi\inv D_J)  \to  C_*(X')$. 
 We denote by $F^{J} C_*(X')$  the image in $C_*(X')$ of the subcomplex of $C_* (\pi\inv D_J) $ of $G(J)$-invariant chains.
Then $F^{J} C_*(X')$ is a subcomplex of $C_*(X')$.   For $p\geq0$ let $ F^{p}C_*(X')$ be the subcomplex  of $C_*(X')$ generated by the  $F^{J} C_*(X')$ with $|J|=p$. 

If $J \subset K$ then $D_J \supset D_K$ and $G(J) \subset G(K)$.  Therefore $F^{J} C_*(X') 
\supset F^{K} C_*(X')$.  So for all $p\geq 0$ we have $F^{p+1}C_*(X')\subset F^pC_*(X')$. We obtain a filtration  
\begin{equation}\label{cornercomplex}
C_*(X') = F^0C_*(X')\supset F^1C_*(X')\supset F^2C_*(X')\supset \cdots ,
\end{equation}
with $F^{n-k+1}C_k(X') = 0$ for all $k\geq 0$, where $n = \dim X$.  We call this filtered complex the \emph{corner complex} of the good compactification $(\Xb,D)$ of $X$. 

The \emph{corner spectral sequence} $\widehat E^r_{p,q}$ is the spectral sequence associated to the  
increasing filtration $\widehat F_*$ obtained by setting $\widehat F_{-p} = F^p$,
\begin{equation}\label{cornerss}
\cdots\subset \widehat F_{-2}C_*(X')\subset \widehat F_{-1}C_*(X')\subset \widehat F_0C_*(X') = C_*(X').
\end{equation}
 This is a second quadrant spectral sequence: If $\widehat E^r_{p,q} \neq 0$ then $(p,q)$ lies in the closed triangle with vertices $(0,0)$, $(0,n)$, $(-n,n)$, $n = \dim X$. The corner spectral sequence converges to the homology of the corner compactification $X'$,
 $$
 \widehat E^r_{p,q}\implies H_{p+q}(X') .
 $$
 
It will be useful to describe the corner filtration on the level of semialgebraic sets, using the definition of semialgebraic chains given in the appendix of \cite{weight}. If $\Gamma$ is a closed $k$-dimensional semialgebraic subset of a semialgebraic set $X$, then $c=[\Gamma]\in C_k(X)$ is the semialgebraic chain represented by $\Gamma$. 

The vector subspace $F^pC_k(X')$ is generated by the subspaces $F^JC_k(X')$ for $J\subset I$, and our definition implies that $c\in F^JC^k(X')$ if and only if $c=[\Gamma]$, where $\Gamma\subset X'$ and $G(J)\Gamma = \Gamma$.

 \vskip.1in
Next we give an alternative  description of the corner filtration. 
For each $J\subset I$ such that $D_J \ne \emptyset$  consider the corner compactification $D'_J$ of  $\mathring D_J$ associated to  the good compactification $D_J$ of $\mathring D_J$ with divisor $ \bigcup_{i\notin J}( D_i\cap D_J)$, and let $\pi_J:D_J'\to D_J$ be the projection.

\begin{prop}\label{factor}
The projection $\pi\inv D_J \to D_J$ factors through $D'_J$.  The induced map  $\rho_J: \pi\inv D_J  \to  D'_J$ is a  principal $G(J)$-bundle, and hence it is a  covering space of degree $2^{|J|}$.  
 \end{prop}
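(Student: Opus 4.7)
The plan is to realize $\rho_J$ as a canonical projection coming from the fiber-product description of $\w X$, and then to verify via good local coordinates that it restricts correctly from $\pi\inv D_J$ to $D'_J$ and is locally trivial with structure group $G(J)$.

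By construction $\w\pi\colon \w X\to \Xb$ is the fiber product over $\Xb$ of the double covers $\pi_{L_i}\colon S(L_i)\to \Xb$ for $i\in I$, so restricting the base to $D_J$ expresses $\w\pi\inv D_J$ as the fiber product over $D_J$ of the pullbacks $S(L_i|_{D_J})\to D_J$, $i\in I$. Dropping the factors with $i\in J$ produces a natural map $\w\rho_J\colon \w\pi\inv D_J \to \w{D_J}$, where $\w{D_J}$ is precisely the double cover used to build $D'_J$ from the good compactification $(D_J,\bigcup_{i\notin J}(D_i\cap D_J))$ of $\mathring D_J$. Tautologically $\w\rho_J$ is a principal bundle with group $\prod_{i\in J}\{1,-1\}$, which is $G(J)\subset G$ acting on $\w X$ through the covering transformations $\w\tau_i$, $i\in J$; since each $\w\tau_i$ with $i\in J$ preserves the vanishing of $\w\varphi_i$ and fixes $\w\varphi_j$ for $j\neq i$, $G(J)$ preserves $\pi\inv D_J$ inside $X'$.

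Next I would show $\w\rho_J$ restricts to a map $\rho_J\colon \pi\inv D_J\to D'_J$. If $\w x\in \pi\inv\mathring D_J$ then Proposition \ref{localcoordinates}, applied at a point $x\in\mathring D_J$ (so $J(x)=J$), gives $\w\varphi_i(\w x)>0$ for all $i\in I\setminus J$; hence $\w\rho_J(\w x)$ lies in the open locus of $\w{D_J}$ whose closure is $D'_J$. A dimension count using the quadrant decomposition of $\pi\inv U$ in the proof of Proposition \ref{localcoordinates} shows that $\pi\inv\mathring D_J$ is open and dense in $\pi\inv D_J$, so continuity of $\w\rho_J$ yields $\rho_J(\pi\inv D_J)\subset D'_J$. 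Compatibility with the projections to $D_J$ is built into the construction, so the factorization through $\pi_J$ is automatic.

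Finally, I would prove local triviality. Fix $x\in D_J$ with $J(x)=K\supset J$ and a good coordinate chart $(U,u_1,\dots,u_n)$ at $x$. The proof of Proposition \ref{localcoordinates} gives
\[
\pi\inv(D_J\cap U)\;\cong\;\bigsqcup_{g\in G(K)}\bigl(D_J\cap \Xb_g(U)\bigr)\times\{g\};
\]
the constraints defining $\Xb_g(U)$ for $i\in J$ are automatic on $D_J$, so $D_J\cap \Xb_g(U)$ depends only on $g|_{K\setminus J}$, and writing $G(K)=G(J)\times G(K\setminus J)$ converts this into
\[
\pi\inv(D_J\cap U)\;\cong\;\bigsqcup_{h\in G(K\setminus J)}\bigl(D_J\cap \Xb_h(U)\bigr)\times G(J)\times\{h\}.
\]
Applying the same recipe to the good compactification $(D_J,\bigcup_{i\notin J}(D_i\cap D_J))$ of $\mathring D_J$ yields
\[
\pi_J\inv(D_J\cap U)\;\cong\;\bigsqcup_{h\in G(K\setminus J)}\bigl(D_J\cap \Xb_h(U)\bigr)\times\{h\},
\]
and in these local models $\rho_J$ is the projection forgetting the $G(J)$-factor. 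This gives a local trivialization of $\rho_J$ as a principal $G(J)$-bundle whose structure-group action is the restriction of the $G$-action on $\w X$. The main obstacle lies in this last step: both local models emerge from the same good chart, but one must carefully match up the $\w\pi$-recipe applied to $\Xb$ with the $\w\pi_J$-recipe applied to $D_J$ so that $\rho_J$ really is the forgetful projection of the two disjoint-union descriptions; the other steps are bookkeeping once $G(J)$ is identified as the covering-transformation group of $\w\rho_J$.
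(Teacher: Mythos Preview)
Your proof is correct and follows essentially the same approach as the paper: both identify $\w\rho_J$ as the projection forgetting the $J$-factors (equivalently, the quotient of $\w\pi^{-1}D_J$ by the $G(J)$-action), and then argue that it restricts correctly over $D'_J$. The paper's argument is terser at the key point: rather than proving $\rho_J(\pi^{-1}D_J)\subset D'_J$ by density and then separately checking local triviality, it simply asserts the equality $(\w\rho_J)^{-1}D'_J = \pi^{-1}D_J$, from which the principal $G(J)$-bundle structure is immediate (the restriction of a principal bundle to the full preimage of a subset of the base is again a principal bundle). Your local-coordinate computation is precisely what is needed to justify that equality, so what you flag as the ``main obstacle'' is exactly the step the paper leaves implicit; once you have the local models matching, the equality $(\w\rho_J)^{-1}D'_J = \pi^{-1}D_J$ drops out and the separate local-triviality verification becomes unnecessary.
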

 \begin{proof}
Let $\xi=(\w\pi:\w X\to \Xb)$ be the principal $G$-bundle associated to the good compactification $\Xb$ of $X$ \eqref{xi}. The restriction $\xi|D_J = (\w\pi:\w\pi\inv D_J\to D_J)$ is a principal $G$-bundle. Let 
$\xi_J=(\w\pi_J:\w D_J\to D_J)$ be the principal $G(I \setminus J)$-bundle associated to the good compactification $D_J$ of $\mathring D_J$. Let 
$\zeta_J=(\w\rho_J:\w\pi\inv D_J\to \w D_J)$ be the principal $G(J)$-bundle such that $\w\rho_J$ is the quotient map of the action of $G(J)$ on $\w\pi\inv D_J$. On $\w\pi\inv D_J$ we have $\w\pi = \w\pi_J\circ \w\rho_J$.

Now $(\w\rho_J)\inv D_J'=\pi\inv D_J$. If $\rho_J =\w\rho_J|\pi\inv D_J$  then on $\pi\inv D_J$ we have  $\pi= \pi_J\circ \rho_J$, and $\zeta_J|D_J'=(\rho_J: \pi\inv D_J\to D_J')$ is a principal $G(J)$-bundle.
 \end{proof}
 
 \begin{cor}\label{trivial}
There is a finite semialgebraic open cover $\mathcal U_J$ of $ \mathring D_J$ such that over each $U\in \mathcal U_J$ the  projection $\pi\inv U \to U$ is a trivial $G(J)$-bundle, i.e. $\pi\inv U = U\times G(J)$.  
 \end{cor}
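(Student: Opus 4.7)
The plan is to first recognize $\pi:\pi\inv \mathring D_J\to \mathring D_J$ as a semialgebraic principal $G(J)$-bundle, and then construct a finite trivializing cover by refining a finite cover of the compact subvariety $D_J\subset \Xb$ by good local coordinate systems. Combining Proposition \ref{factor} with the property established after Proposition \ref{localcoordinates} applied to the corner compactification $D_J'$ of $\mathring D_J$: the map $\rho_J:\pi\inv D_J\to D_J'$ is a principal $G(J)$-bundle, and $\pi_J$ restricts to a semialgebraic homeomorphism from $(D_J')_+ = D_J'\setminus\partial D_J'$ onto $\mathring D_J$. Hence the composite $\pi = \pi_J\circ \rho_J$ restricts to a semialgebraic principal $G(J)$-bundle over $\mathring D_J$.

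Next, using that $D_J$ is a closed, hence compact, subvariety of $\Xb$, I would cover $D_J$ by good local coordinate systems $(U_y,(u_1,\dots,u_n))$ at points $y\in D_J$ (as in the proof of Proposition \ref{localcoordinates}) and extract a finite subcover $U_{y_1},\dots,U_{y_N}$. Setting $V_i := U_{y_i}\cap \mathring D_J$, in the coordinates $V_i$ is cut out by $u_{k(i')}(y) = 0$ for $i'\in J$ and $u_{k(i')}(y)\ne 0$ for $i'\in J(y_i)\setminus J$, hence it decomposes into $2^{|J(y_i)\setminus J|}$ open contractible semialgebraic components $V_{i,h}$ indexed by sign patterns $h$. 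The collection $\mathcal U_J = \{V_{i,h}\}_{i,h}$ is then a finite semialgebraic open cover of $\mathring D_J$.

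To trivialize over each $V_{i,h}$, I would invoke the decomposition $\pi\inv U_{y_i} = \bigsqcup_{g\in G(y_i)}\pi\inv \Xb_g(U_{y_i})$ from the proof of Proposition \ref{localcoordinates}: a point of $V_{i,h}$ lies in $\Xb_g(U_{y_i})$ precisely when $g(i') = h(i')$ for $i'\in J(y_i)\setminus J$ while $g|_J$ is free, so the admissible $g$'s form a single orbit of the subgroup $G(J)\subset G(y_i)$, and each piece $\pi\inv \Xb_g(U_{y_i})\cap \pi\inv V_{i,h}$ maps homeomorphically onto $V_{i,h}$. This yields a semialgebraic bijection $\pi\inv V_{i,h}\cong V_{i,h}\times G(J)$.

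The main obstacle I anticipate is verifying that this bijection is $G(J)$-equivariant and not merely set-theoretic. This should follow from the transformation rules \eqref{actiononsections}: for $i'\in J$, the generator $g_{i'}$ carries $\pi\inv \Xb_g(U_{y_i})$ to $\pi\inv \Xb_{g_{i'}g}(U_{y_i})$, reproducing the regular $G(J)$-action on the second factor of $V_{i,h}\times G(J)$.
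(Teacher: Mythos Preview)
Your proof is correct. In fact your first paragraph already contains the paper's entire argument: the paper simply observes that $\rho_J:\pi\inv D_J\to D_J'$ is a principal $G(J)$-bundle (Proposition~\ref{factor}), that $D_J'$ is compact so there is a finite semialgebraic trivializing cover for $\rho_J$, and that $\pi_J$ identifies $\mathring D_J$ with the open subset $\pi_J\inv(\mathring D_J)\subset D_J'$, so one restricts that cover to $\mathring D_J$. That is the whole proof in the paper.

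What you do beyond this---covering $D_J$ by good coordinate charts on $\Xb$, splitting each $V_i$ by sign patterns, and exhibiting the trivialization explicitly via the decomposition $\pi\inv U_{y_i}=\bigsqcup_{g\in G(y_i)}\pi\inv\Xb_g(U_{y_i})$---is a direct, hands-on construction of the trivializing cover rather than an appeal to abstract compactness. This buys you an explicit description of the trivializing sets and of the $G(J)$-action on them, which is in the spirit of the local characterization of the corner filtration that the paper develops just after this corollary. The paper's route is shorter because it exploits the compactness of $D_J'$ (rather than of $D_J\subset\Xb$) and does not unpack the local trivializations. Two minor points: the sets $V_{i,h}$ need not be contractible or connected under the stated hypotheses on a good coordinate system, but you never use those properties, so this does not affect the argument; and the decomposition $\pi\inv U=\bigsqcup_g\pi\inv\Xb_g(U)$ you invoke is stated in the discussion preceding Proposition~\ref{factor} rather than in the proof of Proposition~\ref{localcoordinates}.
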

 
 \begin{proof}
 This is true for  $\rho_J: \pi\inv D_J  \to  D'_J$ because  $D'_J$ is compact.  Now  
 $\pi_J: \pi_J\inv (\mathring D_J)\to \mathring D_J$ is an isomorphism and hence $\mathring D_J$ 
 can be identified with  $\pi_J\inv (\mathring D_J) \subset D'_J$.  
 Thus it suffices to restrict to   $\mathring D_J$ the corresponding open cover of $D'_J$.
 \end{proof}
 
Associated to the principal bundle $\rho_J: \pi\inv D_J\to D_J'$ of Proposition \ref{factor}, we have the \emph{inverse image map} $\rho_J^* : C_* (D'_J) \rightarrow C_* (\pi\inv D_J)$ defined by $\rho_J^*([\Gamma])= [\rho_J\inv\Gamma]$.
The function $\rho_J^*$ commutes with the boundary map, and so $\rho_J ^*$ is an injective morphism of complexes. (The map $\rho_J^*$ is the chain-level \emph{transfer homomorphism} of the covering map $\rho_J$.) Let $i_J:C_* (\pi\inv D_J)  \to  C_*(X')$ be the inclusion.  Then  $F^{J} C_*(X')$ is  the image in $C_*(X')$ of the composition 
$\eta_J= i_J\circ \rho_J^*$,
\begin{equation}\label{FJ}
\eta_J:C_* (D'_J) \stackrel{\displaystyle \rho_J^*}\longrightarrow C_* (\pi\inv D_J)  \stackrel{\displaystyle i_J }\longrightarrow  C_*(X') ,
\end{equation}
and $\eta_J$ is an isomorphism of the complexes $C_* (D'_J)$ and $F^{J} C_*(X')$.  
Thus $c\in F^{J} C_*(X')$ if and only if $c=[\Gamma]$ for $\Gamma\subset \pi\inv D_J$ with $\Gamma=\rho_J\inv B$, where $B\subset D_J'$.

From Corollary \ref{trivial} we obtain the following useful local characterization of the corner filtration. The vector space $F^{J} C_*(X')$ is generated by the chains $c\in C_*(X')$ such that $c=[\Gamma]$ with  $\Gamma\subset \pi\inv D_{J'}$ for $J'\supset J$ (so $D_{J'}\subset D_J$), and $\Gamma=\operatorname{Closure} \mathring\Gamma$, where 
$\mathring \Gamma \subset \pi \inv \mathring B$, with $\mathring B \subset \mathring D_{J'}$, $\pi\inv \mathring B =\mathring  B \times G(J')$ (\emph{i.e.} $\pi\inv \mathring B\to \mathring B$ is a trivial $G(J')$-bundle)    and 
 $\mathring\Gamma=\mathring B\times gG(J)$ for some $g\in G(J')$. In other words, $\mathring\Gamma$ is an orbit of the action of $G(J)$ on $ \pi\inv \mathring B$.

 Let $\mathring B \subset \mathring D_{J'}$ be a semialgebraic set such that $\pi\inv \mathring B = \mathring B \times G(J')$, let $\dim \mathring B=k$, and let $B$ be the closure of $\mathring B$.  
Then 
\begin{equation}\label{product}
C_k (\pi\inv B) = C_k ( B) \otimes C_0 (G(J')), 
\end{equation} 
where we consider $G(J')$ as a discrete topological space.  In particular, $C_0(G(J')) = \Z_2 [G(J')]$ the $\Z_2$ group algebra of $G(J')$.  
Using this algebra structure we define in the Appendix \eqref{Ifiltration} a filtration $\I^*$ on $ \Z_2 [G(J')]$ and 
hence on   $C_0(G(J')) $.  

 \begin{lem}\label{localtrivialization}
$C_k(\pi\inv B)\cap F^p C_k (X') = C_k ( B) \otimes \I^p C_0 (G(J'))$.  
\end{lem}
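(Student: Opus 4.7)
My plan is to prove the two inclusions separately, using the local characterization of $F^J C_*(X')$ stated immediately before the lemma, together with the fact (from the appendix) that $\I^p$ is the $p$-th power of the augmentation ideal of $\Z_2[G(J')]$. In particular, $\I^p$ is spanned by the elements $t_J := \prod_{i\in J}(1+g_i) = \sum_{h\in G(J)} h$ for $J\subset J'$ with $|J|\geq p$, together with their translates $g\, t_J = \sum_{h\in gG(J)} h$ for $g\in G(J')$.

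The inclusion $\supseteq$ is immediate. Given any generator $g\, t_J\in\I^p$ and any closed $k$-dimensional semialgebraic $B'\subset B$, the element $[B']\otimes (g\, t_J)$ corresponds under $C_k(\pi\inv B)=C_k(B)\otimes C_0(G(J'))$ to the chain $[B'\times gG(J)]$. Since $J\subset J'$ gives $B'\subset B\subset D_{J'}\subset D_J$, this chain is supported in $\pi\inv D_J$ and is invariant under left translation by $G(J)$, hence lies in $F^JC_k(X')\subset F^pC_k(X')$.

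For the inclusion $\subseteq$, take $c\in F^pC_k(X')\cap C_k(\pi\inv B)$ and use the local characterization to write $c=\sum_\alpha c_\alpha$, where each $c_\alpha = [\operatorname{Closure}(\mathring B_\alpha \times g_\alpha G(J_\alpha))]$ is a local generator of some $F^{J_\alpha}C_k(X')$ with $|J_\alpha|\geq p$, $J_\alpha \subset K_\alpha$, $\mathring B_\alpha \subset \mathring D_{K_\alpha}$, and $\pi\inv\mathring B_\alpha = \mathring B_\alpha\times G(K_\alpha)$ trivial. I then restrict each $c_\alpha$ to the open set $\pi\inv\mathring B\subset\pi\inv\mathring D_{J'}$. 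Since the sets $\mathring D_K$ stratify $\Xb$, the intersection $\mathring D_{K_\alpha}\cap\mathring D_{J'}$ is empty whenever $K_\alpha\neq J'$; the interior $\mathring B_\alpha\times g_\alpha G(J_\alpha)$ of $c_\alpha$ then misses $\pi\inv\mathring B$ entirely, and the only possible overlap of $|c_\alpha|$ with $\pi\inv\mathring B$ lies in $(\overline{\mathring B_\alpha}\setminus\mathring B_\alpha)\times g_\alpha G(J_\alpha)$, which has dimension $<k$. Hence $c_\alpha|_{\pi\inv\mathring B}=0$ in $C_k(\pi\inv\mathring B)$ unless $K_\alpha=J'$; in that case $J_\alpha\subset J'$ and $c_\alpha|_{\pi\inv\mathring B} = [\mathring B_\alpha\cap\mathring B]\otimes(g_\alpha t_{J_\alpha})\in C_k(\mathring B)\otimes\I^{|J_\alpha|}\subset C_k(\mathring B)\otimes \I^p$. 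Summing, $c|_{\pi\inv\mathring B}\in C_k(\mathring B)\otimes \I^p C_0(G(J'))$. Since $\dim(B\setminus\mathring B)<k$, the natural restriction $C_k(\pi\inv B)\to C_k(\pi\inv\mathring B)$ is an isomorphism compatible with the tensor decompositions, so $c\in C_k(B)\otimes \I^p C_0(G(J'))$.

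The main obstacle is the stratification/dimension-counting step in the second half: checking that, in each of the cases $K_\alpha\supsetneq J'$, $K_\alpha\subsetneq J'$, and $K_\alpha$ incomparable with $J'$, every contribution $c_\alpha$ restricts to a chain of dimension strictly less than $k$ in $\pi\inv\mathring B$. This relies on the semialgebraic fact $\dim(\overline S\setminus S)<\dim S$ and the disjointness of the open strata $\mathring D_K$ in $\Xb$.
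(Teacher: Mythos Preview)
Your argument is correct and fills in the details that the paper's two-line proof (``This follows from Proposition~\ref{generating}. By Proposition~\ref{translation}, \dots'') leaves to the reader. Both approaches rest on the same ingredient: Proposition~\ref{generating}, which identifies $\I^p$ with the span of the coset sums $[gG(J)]$, so that the local generators $[\,\overline{\mathring B_\alpha\times g_\alpha G(J_\alpha)}\,]$ with $J_\alpha\subset J'$ correspond exactly to $C_k(B)\otimes\I^p$.

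Two remarks on streamlining. First, instead of restricting to the \emph{open} set $\pi^{-1}\mathring B$ and then invoking the boundary dimension count, it is cleaner to intersect with the \emph{closed} set $\pi^{-1}B$: the map $[\Gamma]\mapsto[\Gamma\cap\pi^{-1}B]$ is well defined on $C_k(X')$, is the identity on $C_k(\pi^{-1}B)$, and for $J\subset J'$ sends $F^JC_k(X')$ into $F^JC_k(X')\cap C_k(\pi^{-1}B)$ because $G(J)\subset G(J')$ preserves $\pi^{-1}B$. For $J\not\subset J'$ the image is zero since $D_J\cap\mathring B=\emptyset$. This replaces your case analysis on $K_\alpha$ by a single observation and avoids discussing chains on a non-compact set. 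Second, when $K_\alpha=J'$ the trivialization $\pi^{-1}\mathring B_\alpha\cong\mathring B_\alpha\times G(J')$ used to define $c_\alpha$ need not agree with the fixed trivialization $\pi^{-1}\mathring B\cong\mathring B\times G(J')$; they differ by a locally constant translation in $G(J')$. The paper cites Proposition~\ref{translation} precisely to guarantee that this ambiguity does not affect membership in $\I^p$, a point your write-up uses implicitly but does not state.
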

\begin{proof}
This follows from Proposition \ref{generating}. By Proposition \ref{translation}, the right hand side does not depend on the choice of isomorphism $\pi\inv\mathring B = \mathring B\times G(J')$.
\end{proof}

 \begin{prop}\label{shortexact}
The homomorphism
$\pi_* : C_*(X') \to  C_*(\Xb)$ induces an exact sequence 
\begin{align*}
0 \to F^1 C_*(X')  \to C_*(X') \to  C_*(\Xb) \to 0 .
\end{align*}  
\end{prop}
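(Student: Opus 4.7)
The plan is to split the statement into three claims---(i) $\pi_*$ annihilates $F^1 C_*(X')$, (ii) $\pi_*$ is surjective, and (iii) $\ker \pi_* \subseteq F^1 C_*(X')$---and handle them in turn. The first two follow directly from the structural results already developed in the section, while (iii), the main point, reduces to the local picture provided by Lemma \ref{localtrivialization}.

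For (i), each generator of $F^J C_*(X')$ with $|J|\geq 1$ has by \eqref{FJ} the form $\eta_J(c')=i_J\rho_J^*(c')$. Since $\rho_J$ is a principal $G(J)$-covering of degree $2^{|J|}$ (Proposition \ref{factor}), we have $\rho_{J*}\rho_J^*=2^{|J|}\equiv 0\pmod{2}$, and the factorization $\pi|_{\pi\inv D_J}=\iota_{D_J}\circ\pi_J\circ\rho_J$ yields $\pi_*\eta_J=0$. For (ii), given $[B]\in C_k(\Xb)$, I apply Corollary \ref{trivial} to each stratum $\mathring D_J$ to obtain finite trivializing semialgebraic open covers, and refine to a finite locally closed semialgebraic partition $\{W_\alpha\}$ of $\Xb$ with each $W_\alpha\subset\mathring D_{J(\alpha)}$ and $\pi\inv W_\alpha=W_\alpha\times G(J(\alpha))$. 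Picking the identity section $W_\alpha\hookrightarrow W_\alpha\times\{1_G\}$ on each piece and letting $\Gamma\subset X'$ be the closure of $\bigcup_\alpha(B\cap W_\alpha)\times\{1_G\}$, the map $\pi|_\Gamma$ is generically bijective onto $B$, so $\pi_*[\Gamma]=[B]$.

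For (iii), refine the partition $\{W_\alpha\}$ further via semialgebraic cell decomposition adapted to $\{D_i\}$, so that each closure $\overline{W_\alpha}$ meets the closures of distinct pieces only in strictly lower dimension. Given $c=[\Gamma]\in C_k(X')$ with $\pi_*(c)=0$, I decompose $c=\sum_\alpha c_\alpha$ in $C_k(X')$ with $c_\alpha=[\overline{\Gamma\cap\pi\inv W_\alpha}]$; by \eqref{product} each $c_\alpha=\sum_{g\in G(J(\alpha))}b_{\alpha,g}\otimes[g]$ in $C_k(\overline{W_\alpha})\otimes\Z_2[G(J(\alpha))]$, and under this identification $\pi_*$ becomes $\id\otimes\varepsilon$ for the augmentation $\varepsilon$. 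The partition control ensures that $\pi_*(c)=\sum_\alpha\pi_*(c_\alpha)=0$ forces each $\pi_*(c_\alpha)=\sum_g b_{\alpha,g}=0$ individually, so $c_\alpha\in C_k(\overline{W_\alpha})\otimes\ker\varepsilon=C_k(\overline{W_\alpha})\otimes\I^1 C_0(G(J(\alpha)))$; Lemma \ref{localtrivialization} then places $c_\alpha$ in $F^1 C_k(X')$, whence $c\in F^1 C_k(X')$. The main obstacle is this final globalization step---arranging the partition so the vanishing of $\pi_*(c)$ in $C_k(\Xb)$ propagates to each summand $\pi_*(c_\alpha)$ individually, after which the local computation with the augmentation ideal of $\Z_2[G(J(\alpha))]$ is direct.
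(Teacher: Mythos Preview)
Your proof is correct and follows essentially the same approach as the paper: both reduce to the local trivialization of Lemma~\ref{localtrivialization}, where the sequence becomes $0 \to C_k(B)\otimes \I^1 \to C_k(B)\otimes \Z_2[G(J')] \to C_k(B) \to 0$, exact because $\I^1 = \ker\varepsilon$. The paper handles all three claims at once via this local reduction, whereas you split them up and give a separate transfer-map argument for (i); your extra care in (iii) about the partition (so that global vanishing of $\pi_*(c)$ forces vanishing on each piece) makes explicit a step the paper leaves implicit.
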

\begin{proof}
Fix $\mathring B \subset \mathring D_{J'}$,  $\dim \mathring B =k$, as above.  It suffices to check the exactness 
for $k$-chains over $B$; that is, the exactness of the sequence 
\begin{align*}
0 \to C_k(\pi \inv B)  \cap F^1 C_k (X') \to C_k(\pi \inv B)  \to  C_k(B) \to 0 .
\end{align*}  
This follows  from  Lemma \ref{localtrivialization} and the definition of $\mathcal I^1$ as the kernel of the augmentation map $\epsilon : C_0 (G(J')) \to \Z_2$; see the Appendix \eqref{augmentation}.   
\end{proof}
 
Now we compute the successive quotients of the corner filtration. In Section \ref{CechGysinsection} below we will use the following result to show that the $(\widehat E^1,\hat d^1)$ term of the corner spectral sequence is isomorphic to the Gysin complex of the divisor $D$ (Corollary \ref{E1Gysin}).

 \begin{prop}\label{isotoGysin}
For each $p\geq 0$ there is an isomorphism of chain complexes 
$$
\psi_p: \bigoplus_{|J| = p} C_*(D_J)\stackrel{\displaystyle \approx}\longrightarrow \frac {F^{p} C_*(X') }{F^{p+1} C_*(X')}\  .
 $$
\end{prop}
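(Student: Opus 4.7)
The plan is to combine Lemma \ref{localtrivialization} with the canonical decomposition of $\I^p/\I^{p+1}$ provided by the Appendix. The Appendix shows that in $\Z_2[G(J')]$ the successive quotient $\I^p/\I^{p+1}$ decomposes canonically as $\bigoplus_{J\subset J',\,|J|=p}\Z_2\cdot e_J$, where $e_J$ is the class of $\prod_{i\in J}(1+g_i)\in\I^p$ modulo $\I^{p+1}$; Proposition \ref{translation} guarantees that this decomposition is intrinsic, i.e.\ unchanged under the ambiguities of the trivialization \eqref{trivial1}.

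Combining this with Lemma \ref{localtrivialization}, over a chart $\mathring B\subset\mathring D_{J'}$ with $\pi\inv\mathring B=\mathring B\times G(J')$ we obtain
$$
\frac{C_k(\pi\inv B)\cap F^pC_k(X')}{C_k(\pi\inv B)\cap F^{p+1}C_k(X')}=\bigoplus_{\substack{J\subset J'\\ |J|=p}}C_k(B)\cdot e_J,
$$
and since $B\subset D_{J'}\subset D_J$ for each such $J$, the $J$-summand embeds naturally in $C_k(D_J)$.

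I would then define $\psi_p^J:C_*(D_J)\to F^pC_*(X')/F^{p+1}C_*(X')$ as the map characterized locally by: for $c\in C_k(D_J)$ supported in a chart $B\subset D_{J'}$ (so necessarily $J\subset J'$), $\psi_p^J(c)$ is the element $c\cdot e_J$ in the $J$-summand above. In semialgebraic terms $\psi_p^J([\Gamma])\bmod F^{p+1}$ is represented, in any trivialization of $\pi\inv\mathring B$, by the chain $\Gamma\times gG(J)\subset\pi\inv B$ for an arbitrary coset $gG(J)\subset G(J')$; independence of the coset choice reduces to the algebraic computation $(g-1)\cdot y_J\in\I^{p+1}$, where $y_J=\prod_{i\in J}(1+g_i)$. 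Global well-definedness and consistency across charts is provided by Proposition \ref{translation}. Put $\psi_p=\bigoplus_{|J|=p}\psi_p^J$.

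Then $\psi_p$ is a chain map because, in each local model, the boundary acts as $\partial\otimes\id$ on $C_k(B)\otimes e_J$---matching the semialgebraic boundary on $C_*(D_J)$---while any boundary contribution entering a deeper stratum $\mathring D_{J''}$ with $J''\supsetneq J$ corresponds in the trivialization to an element of $\I^{|J''|}\subset\I^{p+1}$ and hence vanishes in the quotient. Moreover $\psi_p$ is an isomorphism because, chart-by-chart, it realizes exactly the local direct-sum decomposition displayed above. The main obstacle of this plan is the global patching step, namely verifying that the canonical elements $e_J$ and the local tensor description of Lemma \ref{localtrivialization} glue coherently across different trivializations of the principal $G$-bundle $\xi$; this is precisely what Proposition \ref{translation} of the Appendix furnishes.
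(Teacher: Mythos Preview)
Your approach is essentially the same as the paper's: both define $\psi_p^J$ by sending $[B]\in C_k(D_J)$ to the class of the $G(J)$-orbit of a section over $B$, and both reduce well-definedness, injectivity, and surjectivity to the algebra of $\I^p/\I^{p+1}$ via Lemma~\ref{localtrivialization}, Corollary~\ref{nchoosep}, and Proposition~\ref{translation}.

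The one place where your argument is weaker than the paper's is the chain-map verification. Your sentence about boundary contributions ``entering a deeper stratum $\mathring D_{J''}$ with $J''\supsetneq J$'' landing in $\I^{|J''|}\subset\I^{p+1}$ does not clearly identify which boundary you are computing or why that inclusion settles the question; the local tensor description $C_k(\pi\inv B)=C_k(B)\otimes C_0(G(J'))$ holds only for top-dimensional chains over a fixed chart, and the passage to $\partial B$, which may meet strata with larger $J''$, needs a separate argument to compare trivializations. The paper avoids this by observing that $\psi_J$ factors as $\eta'_J\circ(\psi_{J,0})^{-1}$, where $\eta_J=i_J\circ\rho_J^*$ is already a chain map (transfer followed by inclusion) and $\psi_{J,0}$ is the $p=0$ isomorphism $C_*(D'_J)/F^1C_*(D'_J)\cong C_*(D_J)$ applied to the corner compactification $D'_J$ of $\mathring D_J$. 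This factorization makes compatibility with $\partial$ automatic and is worth incorporating into your write-up.
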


\begin{proof} 
First we consider the case $p=0$.  By Proposition \ref{shortexact},   
$\pi_* : C_*(X') \to  C_*(\Xb)$ induces  an isomorphism  $\psi  :  C_*(\Xb) \to 
 C_*(X') / F^{1} C_*(X')  $ that can be described geometrically  as follows.    Given a chain $b\in C_k(\Xb)$ represented by the set $B$, then $c= \psi (b)$ is represented modulo 
$ F^1 C_*(X')$ by the closure $\Gamma$ of the image of any semialgebraic (not necessarily continuous) section of $\pi$ over $B$.  

Similarly we construct $\psi_p$ for any  $p\ge 0$.  
Let  $b=[B]\in C_k(D_J)$, $p=|J|$, and let  $\Gamma\subset X'$ be the closure of the image of any semialgebraic section of $\pi$ over $B$.  
Then we define $\psi_p(b) =c\in  F^pC_k(X')\pmod{F^{p+1}C_k(X')}$, where $c= [G(J)\Gamma]$.   
 We have to show that $\psi_p$ is well-defined, injective, surjective, and that it commutes with the boundary.  
 For this we use the characterization of the corner filtration $F^*$ given in   Lemma \ref{localtrivialization} and the Appendix.  
 
Let $b=[B]$, with $B\subset D_J$, and let $\Gamma'$, $\Gamma''\subset X'$ be the closures of the images of semialgebraic sections of $\pi$ over $B$.   By Corollary \ref{trivial}, after a subdivision of  $B$ 
we may suppose that $B$ is the closure of $\mathring B$, where
$\mathring B\subset \mathring D_{J'}$, $J\subset J'$, and that $\pi\inv \mathring B$ is isomorhpic to  $\mathring B\times G(J')$ as a principal $G(J')$-bundle.  Moreover, by a choice of this isomorphism, and another subdivision of 
 $B$  if necessary, we may also suppose that $\Gamma'$ is the closure of $\mathring\Gamma'$, and $\Gamma''$ is the closure of $\mathring\Gamma''$, where $\mathring\Gamma' = \mathring B\times  \{1\}$ and $\mathring\Gamma'' = \mathring B\times  \{g\}$.    If $g\in G(J)$ then $G(J) \mathring\Gamma' = G(J) \mathring\Gamma''$ so suppose $g\not \in G(J)$.  Let $G'$ be the subgroup of $G(J')$ generated by $G(J)$ and $g$.  Then    $ [G(J) \Gamma'] - [G(J)\Gamma''] = [G' \Gamma']   \in F^{p+1}C_k(X')$, by Lemma \ref{localtrivialization} and Lemma \ref{rank}.    This shows that $\psi_p$ is well-defined.  
 
We now show the injectivity of $\psi_p$.  By a  reduction as  in the previous argument it suffices to show the following 
 claim. Let $b= [B]\in C_k(D_{J'})$, $|J'|\geq p$, where $B$ is the closure of $\mathring B$, with $\mathring B\subset \mathring D_{J'}$ and $\pi\inv \mathring B = \mathring B\times G(J')$. Let $\Gamma$ be the closure of the image of a semialgebraic section of $\pi$ over $B$. We claim that  if 
 $$
 \sum_{J\subset J', |J| = p}  a_J [G(J)\Gamma]\in F^{p+1} C_k(X'), \quad a_J\in \Z_2, 
 $$
then $a_J=0$ for all $J$. Now,  in terms of the isomorphism \eqref{product}, 
$$
 \sum_{J\subset J', |J| = p}  a_J [G(J)\Gamma]=[\Gamma] \otimes \left(\sum_{J\subset J', |J| = p}  a_J [G(J)] \right),
$$
so the claim follows from Corollary \ref{nchoosep}.
  
We now reinterpret the restriction of $\psi_p$ to  $C_*(D_J)$.  We denote this restriction by $\psi_J$.  Denote by $\psi_{J,0}$ the isomorphism of complexes from $C_* (D'_J)/ F^1 C_* (D'_J)$ to $C_* (D_J)$.  Then $\psi_J = \eta'_J \circ (\psi_{J,0})\inv $, where $\eta '_J$ equals $\eta_J$  modulo $F^{p+1}C_*(X')$ (see \eqref{FJ}). It follows that $\psi_J$ commutes with the boundary. Since  the images of all $\eta_J$, $|J|=p$, generate $F^pC_*(X')$, the images of $\psi_J$, $|J|=p$, generate $F^pC_*(X')/ F^{p+1}C_*(X')$. This shows $\psi_p$ is surjective.   
\end{proof}
  

\subsection{The weight filtration}\label{weightfiltrationdef}

The \emph{weight filtration} $\mathcal W_*$ of $C_*(X')$ is defined by
\begin{equation}\label{weightfiltration}
\begin{aligned}
\mathcal W_pC_k(X') &= \Ker[\partial: \widehat F_{p+k}C_k(X')\to C_{k-1}(X')/\widehat F_{p+k-1}C_{k-1}(X')] \\
&= (\operatorname{Dec} \widehat F)_pC_k(X'),
\end{aligned}
\end{equation}
where $\operatorname{Dec} \widehat F_*$ is the  \emph{Deligne shift}  of the filtration $\widehat F_*$ \eqref{cornerss} \cite[(1.3.3]{deligne2}, \cite[A.50]{peterssteenbrink}. Thus the weight filtration $\mathcal W_*$ runs
\begin{equation}\label{weightcomplex}
0 = \mathcal W_{-n-1}C_k(X')\subset \cdots\subset \mathcal W_{-k-1}C_k(X')\subset \mathcal W_{-k}C_k(X') = C_k(X').
\end{equation}
We denote this filtered complex by $\mathcal WC_*(X')$. It is the \emph{weight complex} of the good compactification $\Xb$ of $X$.

The \emph{weight spectral sequence} $E^r_{p,q}$  is the spectral sequence associated to the weight complex. It is a second quadrant spectral sequence: If $E^r_{p,q} \neq 0$ then $(p,q)$ lies in the closed triangle with vertices $(0,0)$, $(-n,2n)$, $(-n,n)$, $n = \dim X$. We have
$$
E^r_{p,q} = \widehat E^{r+1}_{2p+q,-p}
$$
for all $r\geq 1$ and all $p, q$. In particular the $E^1$ term of the weight spectral sequence equals the reindexed $\widehat E^2$ term of the corner spectral sequence. The weight spectral sequence converges to the homology of the corner compactification $X'$,
$$
E^r_{p,q}\implies H_{p+q}(X') .
$$

In Section \ref{weightsingularsection} below we will prove that, up to filtered quasi-isomorphism, the weight complex is independent of the good compactification of $X$. Thus the induced filtration on $H_*(X)$ and all the terms of the weight spectral sequence $(E^r_{p,q}, d^r)$ for $r\geq1$ are algebraic invariants of $X$.


\section{The \v Cech and Gysin complexes}\label{CechGysinsection}
We show that the corner complex of a good compactification $(\Xb,D)$ of $X$ is filtered quasi-isomorphic to the cohomology \v Cech complex of the divisor $D$. It follows that the term $(\widehat E^1,\hat d^1)$ of the corner spectral sequence is isomorphic to the Gysin complex of the divisor $D$.

The semialgebraic cohomology groups of a variety $Y$ are dual to the semialgebraic homology groups of $Y$ (coefficients in $\Z_2$). The cohomology groups $H^k(Y)$, $k\geq 0$, are the homology groups of the semialgebraic cochain complex $(C^*(Y),\delta)$, where $C^k(Y) = \hom(C_k(Y),\Z_2)$ and the coboundary map $\delta_k:C^k(Y)\to C^{k+1}(Y)$ is the adjoint of the boundary map $\partial_{k+1}:C_{k+1}(Y)\to C_k(Y)$.


\subsection{The \v Cech complex}\label{cechsection}
Consider the double complex
\begin{equation}\label{cechcomplex}
C^{p,q} = \bigoplus_{|J| = p} C^q(D_J)
\end{equation}
with first differential $\delta': C^{p,q}\to C^{p+1,q}$ the sum of the restriction maps $C^q(D_J)\to C^q(D_{J'})$ ($|J|= p$, $|J'|= p+1$, and $J\subset J'$) \eqref{DJ}, and second differential $\delta'': C^{p,q}\to C^{p,q+1}$ the sum of the coboundary maps $C^q(D_J)\to C^{q+1}(D_J)$. The cohomology \emph{\v Cech complex} of $(\Xb,D)$ is the complex $\check C^l(\Xb,D) = \bigoplus_{p+q = l}C^{p,q}$ with differential $\delta = \delta'+\delta''$ and decreasing filtration $F^p\check C^l(\Xb,D) = \bigoplus_{j \geq p}\bigoplus_{j+q=l} C^{j,q}$. The cohomology spectral sequence $\check E_r^{p,q}$ associated to this filtration \cite[chapter XI, section 8]{maclane} satisfies 
$$
\check E^{p,q}_1 = \bigoplus_{|J| = p} H^q(D_J),
$$
where the differential $\check d_1^{p,q}:\check E_1^{p,q}\to \check E_1^{p+1,q}$ is equal to the sum of the restriction maps $H^q(D_J)\to H^q(D_{J'})$. This spectral sequence converges to the relative cohomology of the pair $(\Xb,D)$,
$$
\check E_1^{p,q}\implies H^{p+q}(\Xb,D).
$$  


\subsection{The Gysin complex}\label{gysinsection}
If $f:M\to N$ is a continuous map of compact manifolds without boundary, the \emph{Gysin homomorphism} $f^*:H_*(N)\to H_*(M)$ is defined as follows. Let $m=\dim M$ and $n=\dim N$, and for all $l\geq 0$ let $\mathcal D_M:H^l(M)\to H_{m-l}(M)$ and $\mathcal D_N:H^l(N)\to H_{n-l}(N)$ be the Poincar\' e duality isomorphisms. For all $k\geq 0$ we let
$$
f^*= \mathcal D_M\circ H^{n-k}(f) \circ \mathcal D_N\inv: H_k(N)\to H_{k+m-n}(M),
$$
where $H^{n-k}(f):H^{n-k}(N)\to H^{n-k}(M)$ is the homomorphism induced by $f$ on cohomology.

The \emph{Gysin complex} $G(\Xb,X)$ of the good compactification $(\Xb,D)$ of $X$ is the chain complex
\begin{align}\label{Gysincomplex}
G_p(\Xb,X) = \bigoplus_{|J| = p}\bigoplus_k H_k(D_J)
\end{align}
with differential $d$, where $d_p:G_p(\Xb,X)\to G_{p+1}(\Xb,X)$ is the sum of the Gysin maps $i_{J,J'}^*: H_k(D_J)\to H_{k-1}(D_{J'})$ for $J\subset J'$ with $|J| = p$ and $|J'| = p+1$, and $i_{J,J'}:D_{J'}\to D_J$ is the inclusion. 

Note that while the \v Cech complex is the filtered complex associated to a double complex, the Gysin complex is a simple complex without filtration.

\begin{prop}\label{cech-gysin}
The $\check E_1$ term of the \v Cech spectral sequence of a good compactification of $X$ is canonically isomorphic to the Gysin complex,
$$
(\check E_1^{p,*}, \check d_1^{p,*})\cong (G_p(\Xb,X), d_p).
$$
\end{prop}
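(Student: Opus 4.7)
The plan is to build the isomorphism termwise from Poincar\'e duality on each intersection $D_J$, and then verify that this identification carries the restriction maps defining $\check d_1$ to the Gysin maps defining $d_p$. Once the setup is in place, the compatibility of the differentials reduces to the very definition of the Gysin homomorphism recalled in Section \ref{gysinsection}.

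First I would note that, since $(\Xb,D)$ is a good compactification, each nonempty $D_J$ is a compact smooth subvariety of $\Xb$ of dimension $n-p$, where $p=|J|$ and $n=\dim X$. With $\Z_2$ coefficients no orientation hypothesis is needed, and Poincar\'e duality provides a canonical isomorphism $\mathcal D_{D_J}:H^q(D_J)\to H_{n-p-q}(D_J)$ for every $q$. Summing over $|J|=p$ and $q$ yields a $\Z_2$-linear isomorphism
\begin{equation*}
\varphi_p\colon \check E_1^{p,*}=\bigoplus_{|J|=p}\bigoplus_q H^q(D_J)\;\longrightarrow\;\bigoplus_{|J|=p}\bigoplus_k H_k(D_J)=G_p(\Xb,X),
\end{equation*}
with $H^q(D_J)$ mapped onto $H_{n-p-q}(D_J)$.

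Next I would verify the identity $\varphi_{p+1}\circ\check d_1^{p,*}=d_p\circ\varphi_p$. Fix $J\subset J'$ with $|J|=p$, $|J'|=p+1$, and let $i=i_{J,J'}\colon D_{J'}\hookrightarrow D_J$ be the inclusion. The corresponding summand of $\check d_1^{p,*}$ is the cohomology restriction $H^q(i)\colon H^q(D_J)\to H^q(D_{J'})$, while the corresponding summand of $d_p$ is the Gysin map $i^*\colon H_{n-p-q}(D_J)\to H_{n-p-q-1}(D_{J'})$. Applying the definition $f^*=\mathcal D_M\circ H^{n-k}(f)\circ\mathcal D_N^{-1}$ with $N=D_J$, $M=D_{J'}$, $k=n-p-q$, one obtains exactly
\begin{equation*}
i^*=\mathcal D_{D_{J'}}\circ H^q(i)\circ\mathcal D_{D_J}^{-1},
\end{equation*}
that is, the square with horizontal arrows $H^q(i)$ and $i^*$ and vertical arrows $\mathcal D_{D_J}$ and $\mathcal D_{D_{J'}}$ commutes. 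Summing this square over all pairs $J\subset J'$ with the prescribed cardinalities produces the required intertwining.

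The only substantive task is thus the bookkeeping of degrees: keeping track that cohomological degree is preserved by the \v Cech differential while homological degree drops by one under the Gysin differential, and that these two shifts match correctly via the duality relation $k=n-p-q$. I anticipate no real obstacle beyond this indexing, since the smoothness and compactness of every $D_J$ are guaranteed by the normal crossings hypothesis, and the commutation of Poincar\'e duality with cohomology restriction and the Gysin map is essentially the definition of the latter.
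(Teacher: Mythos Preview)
Your proposal is correct and follows exactly the same approach as the paper: the paper's proof simply states that the isomorphism $\check E_1^{p,*}\to G_p(\Xb,X)$ is the sum of the Poincar\'e duality isomorphisms $H^q(D_J)\to H_{n-p-q}(D_J)$, and you have written out in detail the verification that these isomorphisms intertwine $\check d_1$ with $d_p$, which is immediate from the definition of the Gysin map given in Section~\ref{gysinsection}.
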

\begin{proof}
The isomorphism $\check E_1^{p,*}\to G_p(\Xb, X)$ is the sum of the Poincar\' e duality isomorphisms $H^q(D_J)\to H_{n-p-q}(D_J)$, where $n=\dim X$.
\end{proof}


\subsection{Poincar\' e-Lefschetz duality}\label{poincarelefschetzsection}

The following isomorphism corresponds to the classical duality isomorphism $H^{n-k}(\Xb,D)\cong H_k(X)$, $n=\dim X$, $k\geq 0$.

\begin{thm}\label{cech-corner} Let $(\Xb,D)$ be a good compactification of $X$, and let $X'$ be the associated corner compactification of $X$. There is a quasi-isomorphism of filtered complexes
$$
\Psi: (\check C^*(\Xb,D), F^*) \to (C_*(X'), F^*)
$$
from the cohomology \v Cech complex to the corner complex. More precisely, there is a chain homomorphism $\Psi = (\Psi_l)$,  $\Psi_l: \check C^l(\Xb,D) \to C_{n-l}(X')$, such that for all $p,l\geq 0$ we have 
$$
\Psi_l( F^p\check C^l(\Xb,D)) \subset F^pC_{n-l}(X'),
$$
and for all $p\geq 0$ the resulting chain homomorphism
$$
\Psi_*: \frac{F^p\check C^*(\Xb,D)}{F^{p+1}\check C^*(\Xb,D)}\to \frac{F^pC_{n-*}(X')}{F^{p+1}C_{n-*}(X')}\ 
$$
induces an isomorphism in homology.
\end{thm}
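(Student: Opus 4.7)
The plan is to construct $\Psi$ summand by summand using chain-level Poincar\'e duality on each smooth closed manifold $D_J$, and then to deduce that $\Psi$ is a filtered quasi-isomorphism by verifying that its induced map on the associated graded is a quasi-isomorphism and invoking the spectral sequence comparison theorem.

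Concretely, I would fix a semialgebraic triangulation of $\Xb$ compatible with the stratification $\{\mathring D_J\}_{J\subset I}$, so that each smooth closed $(n-p)$-manifold $D_J$ (with $p=|J|$) inherits a triangulation whose dual block decomposition yields a chain-level Poincar\'e duality homomorphism $PD_J : C^q(D_J) \to C_{n-p-q}(D_J)$; the usual orientation issues are absent because coefficients lie in $\Z_2$. For a cochain $c \in C^q(D_J)$ with $p+q=l$, lift $PD_J(c)$ to a chain $\sigma_J(c) \in C_{n-p-q}(D'_J)$ by taking the closure of a semialgebraic (not necessarily continuous) section of $\pi_J : D'_J \to D_J$ over the support of $PD_J(c)$, and then set $\Psi_l(c) = \eta_J(\sigma_J(c))$, where $\eta_J$ is the morphism of \eqref{FJ}. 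Summing over $J$ and over $p+q=l$ defines $\Psi_l : \check C^l(\Xb,D) \to C_{n-l}(X')$. By construction $\Psi_l(c) \in F^J C_{n-l}(X') \subset F^p C_{n-l}(X')$, so $\Psi$ preserves the filtrations.

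For the graded pieces, identify $F^p \check C^l(\Xb,D)/F^{p+1}\check C^l(\Xb,D)$ with $\bigoplus_{|J|=p} C^{l-p}(D_J)$ --- the restriction component $\delta'$ of the \v Cech differential strictly raises filtration degree and hence vanishes on the graded, leaving only $\delta''$ --- and identify $F^p C_{n-l}(X')/F^{p+1}C_{n-l}(X')$ with $\bigoplus_{|J|=p} C_{n-l}(D_J)$ via the isomorphism $\psi_p$ of Proposition \ref{isotoGysin}. Under these identifications, the map induced by $\Psi$ is the direct sum of the Poincar\'e dualities $PD_J$, each of which is a quasi-isomorphism. The spectral sequence comparison theorem then upgrades this to the assertion that $\Psi$ is a filtered quasi-isomorphism.

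The main obstacle will be chain-level coherence: verifying that $\Psi$ commutes with the total differentials $\delta = \delta'+\delta''$ on $\check C^*$ and $\partial$ on $C_*(X')$, despite the non-canonical choice of sections $\sigma_J$. The internal coboundary $\delta''$ is controlled by the chain-map property of $PD_J$ together with the fact that two semialgebraic sections of $\pi_J$ over the same chain on $D_J$ differ by an element of $F^1 C_*(D'_J)$ --- an argument essentially identical to the well-definedness portion of the proof of Proposition \ref{isotoGysin} --- so that $\eta_J$ absorbs the ambiguity modulo $F^{p+1}C_*(X')$. For the restriction component $\delta'$, which couples summands indexed by $J \subset J'$ with $|J'|=|J|+1$, the matching with the corner boundary is a local computation in a good coordinate system near $\pi\inv D_{J'}$: Lemma \ref{localtrivialization} and the group-algebra filtration of the Appendix show that the boundary of a $G(J)$-saturated lift inside $\pi\inv D_J$ contributes, modulo $F^{p+2}$, precisely a $G(J')$-saturated chain in $\pi\inv D_{J'}$ corresponding to the restriction of the original cochain to $D_{J'}$.
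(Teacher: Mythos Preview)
Your strategy---triangulate $\Xb$ compatibly with the $D_J$, apply cellular Poincar\'e duality on each $D_J$, lift to $X'$, and check the graded pieces via Proposition~\ref{isotoGysin}---is exactly the paper's. The paper organizes it as a composition of three filtered maps: (i) pass from semialgebraic cochains $C^*(D_J)$ to simplicial cochains $C^*(K_J)$ (this is where your $PD_J:C^q(D_J)\to C_{n-p-q}(D_J)$ actually lives; there is no chain-level duality on the full semialgebraic cochain complex), (ii) the cellular duality isomorphism $C^q(K_J)\to C_{n-p-q}(K_J^*)$, and (iii) the map $\phi:C_*(K_J^*)\to C_*(X')$ sending a dual cell $B$ to $[\pi^{-1}B]$.

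The point where your write-up has a gap is the ``main obstacle.'' Your proposed resolution---that two section lifts of the same chain in $D_J$ differ by an element of $F^1C_*(D'_J)$, so $\eta_J$ absorbs the discrepancy modulo $F^{p+1}$---establishes only that the induced map on associated graded pieces is well-defined; it does \emph{not} show that $\Psi$ itself is a chain homomorphism $\check C^l(\Xb,D)\to C_{n-l}(X')$, which is what the theorem asserts. With arbitrary section choices $\sigma_J$, the identity $\partial\Psi=\Psi\delta$ will generally fail on the nose. The paper sidesteps this entirely by making the canonical choice: for a dual cell $B\in K_J^*$, set $\phi(B)=[\pi^{-1}B]$. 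Since the $k$-dimensional part of $B$ lies in $\mathring D_J$ (the intersections $B\cap D_{J'}$ for $J'\supsetneq J$ are lower-dimensional dual cells), the chain $[\pi_J^{-1}B]$ is itself a section lift, so this is a particular instance of your $\eta_J\circ\sigma_J$. With this choice the chain-map verification becomes the direct computation
\[
\partial[\pi^{-1}B] \;=\; \bigl[(\pi^{-1}B)\cap\partial(\pi^{-1}D_J)\bigr] + \bigl[\pi^{-1}\operatorname{bd}(B)\bigr]
\;=\; \sum_{J'\supset J,\;|J'|=p+1}[\pi^{-1}(B\cap D_{J'})] + [\pi^{-1}\operatorname{bd}(B)]
\;=\; \phi(\partial'B)+\phi(\partial''B),
\]
matching $\delta'$ and $\delta''$ term by term, with no ``modulo $F^{p+2}$'' needed. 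Once $\phi$ is a genuine chain map, your argument on graded pieces (factoring through $\psi_p$) finishes the proof.
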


\begin{cor}\label{E1Gysin}
The $\widehat E_1$ term of the corner spectral sequence of a good compactifcation of $X$ is isomorphic to the Gysin complex of the divisor $D$ at infinity. More precisely,
for every $p, k \geq 0$ there is an isomorphism
$$
\widehat E^1_{-p,k+p} = H_k\left(\frac{F^pC_*(X')}{F^{p+1}C_*(X')}\right)\cong \bigoplus_{|J| = p} H_k(D_J).
$$
Under this isomorphism the differential 
$$
\hat d^1_{-p, k+p}: H_k\left(\frac{F^pC_*(X')}{F^{p+1}C_*(X')}\right)\to 
H_{k-1}\left(\frac{F^{p+1}C_*(X')}{F^{p+2}C_*(X')}\right)
$$ 
corresponds to the sum of the Gysin homomorphisms 
$$
i_{J,J'}^*:H_k(D_J)\to H_{k-1}(D_{J'}),
$$
 where $|J|= p$, $|J'|=  p+1$, and $J\subset J'$.
\end{cor}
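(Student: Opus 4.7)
The plan is to deduce Corollary \ref{E1Gysin} from Theorem \ref{cech-corner} together with Proposition \ref{cech-gysin}. Since the $\Psi$ of Theorem \ref{cech-corner} is a filtered quasi-isomorphism, it restricts on each associated graded piece to a quasi-isomorphism; hence it induces an isomorphism on the $E_1$ pages of the two spectral sequences, intertwining the respective $d_1$ differentials. The task then splits into two pieces: track indices to read off the isomorphism, and identify the transported \v Cech differential with the Gysin differential.

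For the isomorphism, the corner spectral sequence uses the increasing filtration $\widehat F_{-p} = F^p$, so $\widehat E^1_{-p,\,k+p} = H_k(F^pC_*(X')/F^{p+1}C_*(X'))$. Under $\Psi$ this group matches $H^{n-k}(F^p\check C^*/F^{p+1}\check C^*)$, which in the \v Cech indexing is $\check E_1^{p,\,n-k-p}$. By Proposition \ref{cech-gysin} combined with Poincar\'e duality on the smooth compact manifold $D_J$ of dimension $n-p$, this group is canonically $\bigoplus_{|J|=p}H_k(D_J)$. Equivalently, the isomorphism on $E_1$ follows at once from Proposition \ref{isotoGysin}, which already exhibits $\bigoplus_{|J|=p}C_*(D_J) \cong F^pC_*(X')/F^{p+1}C_*(X')$ as chain complexes, so taking homology is immediate.

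For the differential, because $\Psi$ is a chain map the induced $E_1$ isomorphism intertwines $\check d_1$ with $\hat d^1$. The \v Cech differential $\check d_1:\check E_1^{p,*}\to\check E_1^{p+1,*}$ is by construction the sum of the cohomology restriction maps $H^q(D_J)\to H^q(D_{J'})$ induced by the inclusions $i_{J,J'}:D_{J'}\hookrightarrow D_J$. By the very definition of the Gysin homomorphism recalled in Section \ref{gysinsection}, Poincar\'e duality turns this restriction map into $i_{J,J'}^*:H_k(D_J)\to H_{k-1}(D_{J'})$, yielding the desired description of $\hat d^1$. The main obstacle is the bookkeeping across two different conventions (an increasing filtration $\widehat F_*$ on a homological complex versus a decreasing filtration $F^*$ on a cohomological complex, related by the degree-reversing map $\Psi$) and checking that $\Psi$ transports $\check d_1$ to $\hat d^1$ on the nose; since all coefficients are $\Z_2$, no sign issues intervene.
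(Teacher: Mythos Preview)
Your proposal is correct and follows essentially the same route as the paper: the paper's proof simply reads ``This is an immediate consequence of Theorem \ref{cech-corner} and Proposition \ref{cech-gysin},'' and your write-up spells out exactly what that immediacy means (filtered quasi-isomorphism $\Rightarrow$ isomorphism of $E_1$ pages intertwining $d_1$, then Poincar\'e duality to pass from \v Cech restriction to Gysin). Your index bookkeeping and your aside about Proposition \ref{isotoGysin} are both fine.
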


\begin{proof}
This is an immediate consequence of  Theorem \ref{cech-corner} and Proposition \ref{cech-gysin}.
\end{proof}

Now we turn to the proof of Theorem \ref{cech-corner}. We construct $\Psi$ as the composition of the three filtered quasi-isomorphisms described in subsections \ref{simplicialcechsection}, \ref{cellulargysinsection}, \ref{cellularpullbacksection} below.


\subsection{The simplicial \v Cech complex}\label{simplicialcechsection}
Let $K$ be a semialgebraic triangulation of $\Xb$ such that for all $J\subset I$ the subvariety $D_J=\bigcap_{i\in J}D_i$ is a subcomplex of $K$. There exists a unique triangulation $K'$ of $X'$ such that the map $\pi:X'\to\Xb$ is simplicial, and such a triangulation $K'$ is semialgebraic. We say that $(K',K)$ is an \emph{adapted triangulation} of $\pi$. 

Let $(K',K)$ be an adapted triangulation of $\pi:X'\to \Xb$. For each $J\subset I$ let $K_J$ be the subcomplex of $K$ that triangulates $D_J$. Let $C^q(K_J)$ be the $q$-th simplicial cochain group of $K_J$. Consider the double complex
\begin{equation}\label{simplicialcech}
C^{p,q}(K) = \bigoplus_{|J| = p}  C^q(K_J).
\end{equation}
The \emph{simplicial \v Cech complex} is the filtered complex defined by 
$\check C^l(K) = \bigoplus_{p+q=l} C^{p,q}(K)$, with filtration
$F^p\check C^l(K) = \bigoplus_{j\geq p}\bigoplus_{j+q=l}C^{j,q}(K)$. The map of double complexes $C^{p,q}\to C^{p,q}(K)$ that is the sum of the chain maps $C^*(D_J)\to C^*(K_J)$ adjoint to the inclusion $C_*(K_J)\to C_*(D_J)$ defines a filtered quasi-isomorphism $\check C^*(\Xb,D)\to \check C^*(K)$ from the \v Cech complex \eqref{cechcomplex} to the simplicial \v Cech complex \eqref{simplicialcech}.


\subsection{The cellular dual complex}\label{cellulargysinsection}
Let $K^*$ be the dual cell complex of the simplicial complex $K$ \cite[\S 64]{munkres}. For each $J\subset I$, the cells of the dual complex $K^*_J$ of the triangulation $K_J$ of $D_J$  are the intersections with $D_J$ of the cells of $K^*$. The dimension of the smooth variety $D_J$ is $n - p$, where $p = |J|$. The double complex $C^{p,q}(K)$ is isomorphic to the double complex
\begin{equation}\label{cellulargysin}
C_{p,k}(K^*) = \bigoplus_{|J| = p} C_{k}(K_J^*)
\end{equation}
via the classical cellular Poincar\' e duality isomorphism $C^q(K_J) \to C_{n-p-q}(K_J^*)$ which assigns to a simplex $\sigma\in K_J$ the dual cell $\sigma^*_J \in K^*_J$.  The second differential of this double complex $\partial'':C_{p,k}(K_J^*)\to C_{p,k-1}(K_J^*)$ is given by the cellular boundary map. 
The first differential $\partial': C_{p,k}(K^*)\to C_{p+1,k-1}(K^*)$  is given by the cellular Gysin maps $i(J,J')^*: C_k(K_J^*)\to C_{k-1}(K_{J'}^*)$, where $|J|= p$, $|J'|= p+1$, and $J\subset J'$. By definition the Gysin map on cellular chains is Poincar\' e dual to the restriction map on simplicial cochains. If $\sigma \in K_{J'}\subset K_J$  then $i(J,J')^*(\sigma^*_J) = \sigma^*_{J'} = \sigma^*_J\cap D_{J'}$. Thus the Gysin map $i(J,J')^*$ applied to a cellular chain in $K^*_J$ is the intersection of the chain with $D_{J'}$.

Thus there is a filtered  chain isomorphism from the simplicial \v Cech complex to the \emph{cellular dual complex} 
$$
\w C_k(K^*) = \bigoplus_pC_{p,k}(K^*)
$$ 
with boundary map $\partial = \partial' +\partial''$ and filtration $F^p\w C_k(K^*) = \bigoplus_{j\geq p}C_{j,k}(K^*)$. The term $(E^1,d^1)$ of the spectral sequence of this filtered complex is the Gysin complex \eqref{Gysincomplex}.


\subsection{The cellular pullback}\label{cellularpullbacksection}
We define a filtered quasi-isomorphism from the cellular dual complex to the corner complex,
$$
\phi: \w C_*(K^*)\to C_*(X')\ .
$$
For $|J|=p$ we define $\phi: C_k(K_J^*)\to C_k(X')$ as follows. For each $k$-cell $B\in C_k(K_J^*)$ let $\phi(B) = [\pi\inv B] \in C_k(X')$. Since $\pi\inv B = \rho_J\inv(\pi_J\inv B)$ we have $[\pi\inv B]\in F^pC_*(X')$.

We claim that $\phi(\partial B)= \partial\phi(B)$ for all $k$-cells $B\in C_k(K_J^*)$, and so $\phi$ is a chain map. By definition $\partial B = \partial'B + \partial''B$. 

If $J'\supset J$ with $|J'|= p+1$ then $\dim(B\cap D_{J'}) = k-1$; in fact, $B\cap D_{J'}$ is a $(k-1)$-cell of $K_{J'}^*$. Thus $\partial'B= \sum_{J'}B\cap D_{J'}$, summed over all $J'\supset J$ with $|J'|=p+1$. Therefore 
$$
\phi(\partial'B) = \sum_{J'}\phi(B\cap D_{J'}) = \sum_{J'}[\pi\inv(B\cap D_{J'})]= [(\pi\inv B)\cap \partial(\pi\inv D_J)],
$$
where $\partial(\pi\inv D_J)=\bigcup_{J'}\pi\inv D_{J'}$ is the boundary of the manifold $\pi\inv D_J$.

Let $\operatorname{bd}(B)$ denote the cellular boundary of the $k$-cell $B$. In other words, if $h:\mathbb B^k\to B$ is a semialgebraic homeomorphism from the unit ball in $\R^k$ onto $B$, then $\operatorname{bd}(B)= h(\partial \mathbb B^k)$.
We have $\partial'' B = \sum B'$, summed over all $(k-1)$-cells $B'$ of $K_J^*$ with $B'\subset \operatorname{bd}(B)$. So 
$$
\phi(\partial''B) = \sum\phi(B') = [\pi\inv\operatorname{bd}(B)] .
$$
On the other hand,
$$
\partial\phi(B) =\partial[\pi\inv B] = [(\pi\inv B)\cap \partial(\pi\inv D_J)] +  [\pi\inv\operatorname{bd}(B)],
$$
which gives the claim.

Finally, we show that the chain map $\phi$ is a filtered quasi-isomorphism. For a $k$-cell $B$ of $K_J^*$, the set $\pi_J\inv B$ is the image of a semialgebraic section of $\pi_J:D_J'\to D_J$ over $B$. Thus the induced map
$$
\phi_p:\frac{F^p\w C_*(K^*)}{F^{p+1}\w C_*(K^*)}= \bigoplus_{|J|=p}C_*(K_J^*)\to \frac{F^p C_*(X')}{F^{p+1} C_*(X')}
$$
factors as $\phi_p= \psi_p\circ \xi_p$, where
$$
\xi_p: \bigoplus_{|J|=p}C_*(K_J^*)\to \bigoplus_{|J|=p}C_*(D_J)
$$
is the inclusion $\xi_p(B) = [B]$, and
$$
\psi_p:\bigoplus_{|J|=p}C_*(D_J)\to \frac{F^p C_*(X')}{F^{p+1} C_*(X')}
$$
is given by Proposition \ref{isotoGysin}. The chain map $\xi_p$ induces an isomorphism in homology, and $\psi_p$ is a chain isomorphism by Proposition \ref{isotoGysin}. Thus $\phi_p$ induces an isomorphism in homology.

This completes the proof of Theorem \ref{cech-corner}.

\subsection{Duality with Borel-Moore homology}\label{dualitysection}

If $X$ is a smooth $n$-dimensional real algebraic variety with good compactification $(\Xb,D)$ and associated corner compactification $X'$, the weight filtration \eqref{weightcomplex} on the complex of semialgebraic chains $C_*(X')$ gives the weight filtration of the classical (compactly supported) homology groups $H_k(X)$, $0\leq k \leq n$,
\begin{equation}\label{homologyweightfiltration}
0 = \mathcal W_{-n-1}H_k(X)\subset \cdots\subset \mathcal W_{-k-1}H_k(X)\subset \mathcal W_{-k}H_k(X) = H_k(X).
\end{equation}
We will show in section \ref{weightsingularsection} that this filtration does not depend on the choice of good compactification of $X$.
In previous work \cite{weight} we defined a weight filtration on the complex of semialgebraic chains with closed supports $C_*^{BM}(X)$  \cite{weight}, which gives the weight filtration of the Borel-Moore homology groups $H^{BM}_{n-k}(X)$, $0\leq k \leq n$,
 \begin{equation}\label{bmhomologyweightfiltration}
 0 = \mathcal W_{-n+k-1}H^{BM}_{n-k}(X)\subset\cdots\subset \mathcal W_{-1}H^{BM}_{n-k}(X) \subset \mathcal W_0H^{BM}_{n-k}(X) = H^{BM}_{n-k}(X).
\end{equation}
  For each $k$, $0\leq k\leq n$, Poincar\' e-Lefschetz duality gives a nonsingular bilinear intersection pairing
$$
\langle\ , \rangle: H_k(X)\times H^{BM}_{n-k}(X) \to \Z_2.
$$
We show that the weight filtrations \eqref{homologyweightfiltration} and \eqref{bmhomologyweightfiltration} on these groups are dual under this pairing.

\begin{thm}\label{Hduality}
Let $X$ be a smooth $n$-dimensional variety. For all $p\leq 0$ and $k\geq 0$,
$$
\mathcal W_pH_k(X) = \{ \alpha \in H_k(X)\ |\ \langle\alpha,\beta\rangle = 0\ \text{for all}\ \beta \in \mathcal W_{-p-n-1}H^{BM}_{n-k}(X) \}.
$$
\end{thm}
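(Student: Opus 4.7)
The plan is to identify both weight filtrations via the divisor stratification of $(\Xb,D)$ and pair them up by Poincar\'e duality on each stratum $D_J$. First reformulate using the manifold with corners $X'$: since $X\hookrightarrow X'$ is a homotopy equivalence we have $H_k(X)=H_k(X')$, and since $X'\setminus\partial X'=X'_+$ is semialgebraically homeomorphic to $X$ we have $H^{BM}_{n-k}(X)\cong H_{n-k}(X',\partial X')$. Under these isomorphisms $\langle\,,\,\rangle$ becomes Poincar\'e--Lefschetz duality for the compact semialgebraic manifold with corners $X'$.

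Next, compare the $E^1$-pages of the two weight spectral sequences. By Corollary \ref{E1Gysin}, the corner spectral sequence for $H_*(X')$ has $\widehat E^1$-term $\bigoplus_{|J|=p}H_k(D_J)$ with Gysin differential, and after the Deligne shift this becomes the $E^1$-term of the weight spectral sequence for $H_*(X)$. Similarly, the weight spectral sequence for $H^{BM}_*(X)$ constructed in \cite{weight} has $E^1$-summands $H^{BM}_k(D_J)=H_k(D_J)$ (equality because $D_J$ is compact), again with Gysin-type differentials. Poincar\'e duality on each compact smooth $D_J$ (of dimension $n-p$, $p=|J|$) provides a perfect pairing
\[
H_k(D_J)\otimes H_{n-p-k}(D_J)\to\Z_2,
\]
which assembles into a perfect pairing of $E^1$-pages. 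The standard fact that for an inclusion $i:D_{J'}\hookrightarrow D_J$ of compact smooth subvarieties the Gysin map $i^*$ on homology is Poincar\'e dual to the direct image $i_*$ in the opposite direction shows that the Gysin differentials on the two sides are transpose under this pairing.

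Once the $E^1$-level duality is in place, a standard spectral-sequence argument propagates it to every page: the pairing is compatible with $d^r$ on both sides, hence descends to a perfect pairing at $E^\infty$, and convergence of both spectral sequences identifies this with a perfect pairing between the associated gradeds of $\mathcal W_*H_k(X)$ and $\mathcal W_*H^{BM}_{n-k}(X)$. Together with nondegeneracy of $\langle\,,\,\rangle$, this forces the orthogonality relation $\mathcal W_pH_k(X)^\perp=\mathcal W_{-p-n-1}H^{BM}_{n-k}(X)$; the precise shift $p\mapsto -p-n-1$ drops out of tracking the Deligne shifts on both sides together with the dimension offset $k\mapsto n-p-k$ in Poincar\'e duality on each $D_J$. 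The main obstacle is verifying chain-level compatibility of the pairing with the two filtrations --- concretely, exhibiting an intersection pairing $C_k(X')\otimes \mathcal{W}_{*}C^{BM}_{n-k}(X)\to\Z_2$ that induces Poincar\'e--Lefschetz duality and sends $F^pC_k(X')$ paired against the appropriate piece of the Borel--Moore weight filtration to zero. Geometrically this is a transversality statement for semialgebraic chains adapted to the stratification $\{\mathring D_J\}$, and the local normal form from Proposition \ref{localcoordinates} should suffice to establish it.
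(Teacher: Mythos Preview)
Your strategy is sound in outline but differs from the paper's, and the difference matters precisely at the point you flag as the main obstacle. You work directly with the Gysin description of the $E^1$-pages and try to build a filtered intersection pairing on $C_*(X')$ versus the Borel--Moore weight complex, appealing to transversality adapted to the stratification. This is plausible but, as you admit, not carried out; the transversality argument for semialgebraic chains against the corner filtration is genuine work, and you would also need to check that the resulting pairing on the abutment really is the Poincar\'e--Lefschetz pairing (a perfect pairing on $E^\infty$ alone does not pin down which pairing you have on $H_*$).

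The paper sidesteps this entirely by routing through the \v Cech complexes. Theorem~\ref{cech-corner} already gives a filtered quasi-isomorphism from $(C_*(X'),F^*)$ to the cohomology \v Cech complex $(\check C^*(\Xb,D),F^*)$, and \cite[Theorem~(1.1)(2)]{weight} gives the analogous statement for the Borel--Moore side and the homology \v Cech complex $(\check C_*(\Xb,D),F_*)$. The point is that the duality between $\check C^*(\Xb,D)$ and $\check C_*(\Xb,D)$ is purely formal---it is just $\hom(-,\Z_2)$ applied termwise to the double complex $\bigoplus_{|J|=p}C_q(D_J)$, with the filtrations manifestly annihilating each other in the required pattern. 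After Deligne shift on both sides one reads off the index relation $p\mapsto -p-n-1$ directly. So what you propose to prove by a geometric transversality argument the paper obtains for free from the algebraic duality of the two \v Cech complexes, with Theorem~\ref{cech-corner} (and its Borel--Moore analogue) absorbing all the geometry. Your approach would give an independent and more hands-on proof, but the acknowledged gap is exactly the content that the \v Cech route makes unnecessary.
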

\begin{proof}
This is a consequence of a more basic duality of filtered chain complexes. The weight filtration on $H_*(X)$ is induced by the weight filtration on the complex $C_*(X')$, where $X'$ is the corner compactification of $X$. The weight filtration on $C_*(X')$ is by definition the Deligne shift of the corner filtration on $C_*(X')$ \eqref{weightfiltration}. The complex $C_*(X')$ with the corner filtration is in turn filtered quasi-isomorphic to the cohomology \v Cech complex $\check C^*(\Xb,D)$ with its standard filtration (Theorem \ref{cech-corner}). The cohomology \v Cech complex is dual to the homology \v Cech complex $\check C_*(\Xb,D)$, where
$$
\begin{aligned}
&\check C_l(\Xb,D) = \bigoplus_{p+q=l} C_{p,q},\\
&C_{p,q} = \bigoplus_{|J| = p} C_q(D_J),\\
&F_p \check C_l(\Xb,D) = \bigoplus_{j\leq p} \bigoplus_{j+q=l} C_{j,q}.
\end{aligned}
$$
Finally, the complex $C^{BM}_*(X)$ with its weight filtration is quasi-isomorphic to the  homology \v Cech complex $\check C_*(\Xb,D)$ with the Deligne shift of the standard filtration \cite[Theorem (1.1)(2), proof of Proposition (1.9)]{weight}. This last quasi-isomorphism corresponds to the isomorphism $H_l(\Xb,D)\cong H^{BM}_l(X)$.
\end{proof}


\section {Functoriality}\label{functorialitysection}

In this section we prove that the weight filtration is functorial for maps of pairs $(\Xb,X)$, where $\Xb$ is a good compactification of $X$. First we show that a regular map $(\fb,f):(\Xb,X)\to (\Yb,Y)$ induces a semialgebraic map $f':X'\to Y'$ of corner compactifications. The group actions on the principal bundles containing $X'$ and $Y'$ are used to prove that the chain map induced by $f'$ preserves the corner filtration. Finally, we compute the corresponding homomorphism of Gysin complexes.

Let $f:X\to Y$ be a regular map of smooth varieties that extends to a regular map $\fb:\Xb\to\Yb$ of good compactifications. The divisor $D= \Xb\setminus X$ is a finite union of  smooth codimension one subvarieties $D_i$, $i\in I_X$ \eqref{ncd},  and the divisor $E = \Yb\setminus Y$ is a finite union of  smooth codimension one subvarieties $E_j$, $j\in I_Y$. In this section we assume that all the divisors $D_i$ and $E_j$ are \emph{irreducible}.
  
For $x\in \Xb$ let $J(x) = \{i\in I_X\ |\ x\in D_i\}$, and for $y\in \Yb$ let $J(y)=\{j\in I_Y\ |\ y\in E_j\}$.  For every $x\in \Xb$ and $y \in \Yb$, there exist good local coordinates $(U, (u_1,\dots,u_n))$ on $(\Xb,D)$ with $(u_1(x),\dots,u_n(x)) = 0$, and   $(V, (v_1,\dots,v_m))$ on $(\Yb,E)$ with $(v_1(y),\dots,v_m(y)) = 0$.
  
  Let $G_X$ be the group of functions $g: I_X\to \{1, -1\}$, and let $G_Y$ be the group of functions 
 $h: I_Y\to \{1, -1\}$.   Let $G(x) = \{ g\in G_X\ |\ g(i) = 1,\ i\notin J(x)\}$ and 
 $G(y) = \{ h\in G_Y\ |\ h(j) = 1,\ j\notin J(y)\}$.
  
\begin{thm}\label{cornermap}
 Let $X$ and $Y$ be smooth real algebraic varieties with good compactifications $\Xb$ and $\Yb$, and let $f:X\to Y$ be a regular map that extends to a regular map $\fb: \Xb\to \Yb$. Let $X'$ and $Y'$ be the corner compactifications associated to $\Xb$ and $\Yb$. 
There exists a unique continuous semialgebraic map $f':X'\to Y'$ such that $f'|X = f$. 
Moreover 
$$\fb\circ \pi_X = \pi_Y\circ f',$$
 where $\pi_X: X'\to \Xb$ and $\pi_Y:Y'\to \Yb$ are the projections. 
 
 If $Z$ is a smooth real algebraic variety with good compactification $\Zb$, and $\xi:Y\to Z$ is a regular map that extends to a regular map $\overline\xi:\Yb \to \Zb$, then
 $$
 (\xi\circ f)' = \xi'\circ f' .
 $$
\end{thm}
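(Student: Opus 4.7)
The plan is to obtain $f'$ by uniqueness on the open dense subvariety $X'_+\subset X'$ and then to extend continuously across the corner boundary via an explicit local sign computation.

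\textbf{Uniqueness, projection identity, and functoriality.} By Proposition \ref{localcoordinates}, $\pi_X$ restricts to a semialgebraic homeomorphism $X'_+\to X$, and similarly $\pi_Y$ restricts to a homeomorphism $Y'_+\to Y$. The condition $f'|X=f$ therefore forces $f'|_{X'_+} = (\pi_Y|_{Y'_+})\inv\circ f\circ \pi_X|_{X'_+}$. Since $X'_+$ is dense in $X'$ and $Y'$ is Hausdorff, any continuous extension is unique. This simultaneously yields uniqueness of $f'$, the identity $\fb\circ \pi_X=\pi_Y\circ f'$ (which holds on $X'_+$ by construction and then on $X'$ by continuity), and the functoriality $(\xi\circ f)'=\xi'\circ f'$ (both sides are continuous semialgebraic maps $X'\to Z'$ agreeing on the dense piece $X'_+$).

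\textbf{Local factorization.} All the real content is in the continuous extension across $\partial X'$. Fix $x\in \Xb$, let $y=\fb(x)$, $J=J(x)$, $K=J(y)$, and choose good local coordinate systems $(U,u_1,\dots,u_n)$ on $(\Xb,D)$ at $x$ and $(V,v_1,\dots,v_m)$ on $(\Yb,E)$ at $y$ with $\fb(U)\subset V$; for $j\in K$ let $l(j)\in\{1,\dots,m\}$ satisfy $E_j\cap V=\{v_{l(j)}=0\}$. Since $\fb(X)\subset Y$, for each $j\in K$ the regular function $v_{l(j)}\circ\fb$ vanishes only on $\fb\inv(E_j)\cap U\subset D\cap U$. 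Iterated real analytic division by the coordinates $u_{k(i)}$ gives, after shrinking $U$, a factorization
$$
v_{l(j)}\circ \fb = \epsilon_j\cdot\prod_{i\in J} u_{k(i)}^{a_{ij}},
$$
where $a_{ij}\geq 0$ is the multiplicity of $\fb^*E_j$ along the irreducible divisor $D_i$ and $\epsilon_j$ is real analytic on $U$ and nonvanishing on $U\setminus D$. Hence $\epsilon_j$ has a constant sign $\sigma_j(g)\in\{\pm 1\}$ on each connected quadrant $X_g(U)$ (see \eqref{XgU}), and on this quadrant $\sgn(v_{l(j)}\circ\fb)=\sigma_j(g)\cdot\prod_{i\in J}g(i)^{a_{ij}}$.

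\textbf{Extension, gluing, and anticipated obstacle.} Choose trivializations \eqref{trivial1} for $\pi_X$ over $U$ and $\pi_Y$ over $V$ compatible with the unique lift of $f$ on $X'_+$, so that $\pi_X\inv(U)=\bigsqcup_{g\in G(J)}\pi_X\inv(\Xb_g(U))$ and analogously for $\pi_Y\inv(V)$. Define $f'$ on $\pi_X\inv(\Xb_g(U))$ by mapping it via $\fb$ into $\pi_Y\inv(\Yb_{h(g)}(V))$, where $h(g)\in G(K)$ is prescribed by
$$
h(g)(j)=\sigma_j(g)\cdot\prod_{i\in J}g(i)^{a_{ij}}\quad (j\in K), \qquad h(g)(j)=1 \quad (j\notin K),
$$
the latter forced by continuity from $X'_+$ since $v_{l(j)}(y)\neq 0$ for $j\notin K$. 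This rule is semialgebraic in local coordinates, restricts to the required lift of $f$ on $X'_+\cap\pi_X\inv(U)$, and is continuous because for any sequence $x_n\to x$ in $X_g(U)$ the signs $\sgn(v_{l(j)}(\fb(x_n)))$ stabilize to $h(g)(j)$. Uniqueness of continuous extensions on dense overlaps forces these local definitions to glue to the global $f':X'\to Y'$. I expect the main obstacle to be precisely the real analytic factorization with $\epsilon_j$ nonvanishing off $D$: one must iterate real analytic division enough times to exhaust the vanishing order of $v_{l(j)}\circ\fb$ along each $D_i$, and then verify that the residual factor has a well-defined sign on each quadrant. Once that is in hand, everything else reduces to bookkeeping of the sign character $G(J)\to G(K)$ and the density-uniqueness argument.
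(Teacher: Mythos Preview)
Your proof is correct and follows the same overall architecture as the paper's: define $f'$ locally via the quadrant decompositions $\pi_X\inv U=\bigsqcup_g X'_g(U)$ and $\pi_Y\inv V=\bigsqcup_h Y'_h(V)$, then invoke density of $X'_+$ for uniqueness, the projection identity, and functoriality.

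The difference is one of economy. The paper avoids your explicit factorization $v_{l(j)}\circ\fb=\epsilon_j\prod u_{k(i)}^{a_{ij}}$ entirely at this stage: since good local coordinates are chosen so that each $X_g(U)$ and each $Y_h(V)$ is \emph{connected}, and $f(X_g(U))\subset Y\cap V=\bigsqcup_h Y_h(V)$, there is automatically a unique $h_0=h_0(g)$ with $f(X_g(U))\subset Y_{h_0}(V)$---no sign computation needed. Continuity and semialgebraicity are then dispatched in one line by observing that the graph of $f'$ is the closure in $X'\times Y'$ of the graph of $f$. Your route instead computes $h(g)$ explicitly via the exponents $a_{ij}$ and the residual sign $\sigma_j(g)$; this is exactly the content of the paper's equation \eqref{quasimonomialdef} and Proposition~\ref{prefunctoriality}, which appear \emph{after} the present theorem and are used to prove that $f'_*$ preserves the corner filtration. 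So you have front-loaded a computation the paper postpones. (Incidentally, the paper shows the stronger fact that $r_j$ has constant sign on all of $U\setminus D$, not just on each quadrant, because $\dim r_j\inv(0)\le n-2$; your weaker per-quadrant statement suffices here but you will want the stronger one later.)
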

\begin{proof} Given $x'\in X'$, let $ x = \pi_X(x')$ and $ y = \fb( x)$. Choose good coordinate neighborhoods $U$ of $ x$ and $V$ of $y$ as above, with $\fb(U)\subset V$. 
For $g\in G(x)$ and $h\in G(y)$ consider the sets $X_g(U)\subset X$, $\Xb_g(U)\subset \Xb$ and $Y_h(V)\subset Y$, $\Yb_h(V)\subset \Yb$ \eqref{XgU} and \eqref{XbargU}. Let $X'(U)= \pi_X^{-1}U$, $X'_g(U) = \pi_X^{-1}\Xb_g(U)$, and $Y'(V) = \pi_Y^{-1}V$, $Y'_h(V) = \pi_Y^{-1}\Yb_h(V)$. Then $X'(U) = \bigsqcup_g X'_g(U)$ and $Y'(V) = \bigsqcup_hY'_h(V)$. Now $x'\in X'_{g_0}(U)$ for a unique  $g_0\in G(x)$. The open sets $X_g(U)$ and $Y_h(V)$ are connected, so there is a unique $h_0$ with $f(X_{g_0}(U))\subset Y_{h_0}(V)$. Let $y'$ be the unique element of $Y'_{h_0}(V)$ such that $\pi_Y(y') =  y$, and set $f'(x') = y'$. 

By construction $\fb(\pi_X(x') )= \pi_Y(f'(x'))$, so $\fb|X = f$, and the graph of $f'$ is the closure in $X'\times Y'$ of the graph of $f$. Therefore $f'$ is continuous and semialgebraic. The function $f'$ is uniquely determined by $f$ since $X$ is dense in $X'$. It follows that if $f:X\to Y$ and $\xi: Y\to Z$ are as above, then $(\xi\circ f)'= \xi'\circ f'$.
\end{proof}

If $f:X \to Y$ is a regular map of smooth varieties that extends to a regular map $\fb:\Xb\to \Yb$ of good compactifications, with $D = \Xb\setminus X$ and $E=\Yb\setminus Y$, then $\fb^{-1}(E) \subset D$. Let $\fb(x) = y$ and let $U$ and $V$ be good coordinate neighborhoods of $x$ and $y$, respectively, as above, with $\fb(U)\subset V$. Suppose that for $i\in J(x)$ the divisor $D_i\cap U$ of $U$ is given by $u_{k_U(i)}=0$, and for $j\in J(y)$ the divisor $E_j\cap V$ of $V$ is given by $v_{k_V(j)}= 0$. For every $i\in J(x)$ and $j\in J(y)$ there are non-negative integers $a_{ij}$ and a real analytic  function $r_j:U\to \R$ such that on $U$ we have
\begin{equation}\label{quasimonomialdef}
v_{k_V(j)}\circ \overline f = r_j \prod_{i\in I(x)} (u_{k_U(i)})^{a_{ij}}.
\end{equation}
Moreover, $r_j\inv (0) \subset D$ and $\dim r_j\inv (0) \le n-2$, and therefore $r_j$ has constant sign on $U\setminus D$. Since the divisors $D_i$ and $E_j$ are irreducible, the exponents $a_{ij}$ do not depend on the choice of $x$ and $y$ or on the choice of good local coordinates.  
Indeed, they are defined by the condition that the divisor
$$
\fb \inv (E_j) -  \sum_{i\in I_X} a_{ij} D_i  , 
$$
described locally by $r_j=0$, has   real part of dimension strictly less than $n-1$. (See Remark \ref{monomial} for an example.)  Thus the numbers $a_{ij}$ are well-defined not only for the divisors $D_i$ and $E_j$ such that $D_i\cap \fb\inv (E_j) \ne  \emptyset$ by \eqref{quasimonomialdef}, but also we have that 
if $D_i\cap \fb\inv (E_j) = \emptyset$, then  $a_{ij}=0$.

  We define a homomorphism $\varphi: G(x)\to G(y) $ by
\begin{equation}\label{varphi}
\varphi(g)(j) = \prod_{i\in J(x)}  g(i)^{a_{ij}} .
\end{equation}

\begin{prop}\label{prefunctoriality}
Let $f:X\to Y$ be a regular map of smooth varieties that extends to a regular map $\fb:\Xb\to\Yb$ of good compactifications, and let $f':X'\to Y'$ be  the associated map of corner compactifications. If $\fb(x) = y$, 
then 
$$
f'(g\cdot x') = \varphi (g)\cdot f'(x')
$$
for all $g\in G(x)$ and $x'\in \pi_X\inv(x)$. 
\end{prop}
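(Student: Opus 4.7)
The plan is to reduce to an explicit sign computation in good local coordinates near $x$ and $y=\fb(x)$. Choose good local coordinates $(U,(u_1,\dots,u_n))$ at $x$ and $(V,(v_1,\dots,v_m))$ at $y$ with $\fb(U)\subset V$, such that $D_i\cap U = \{u_{k_U(i)}=0\}$ for $i\in J(x)$ and $E_j\cap V = \{v_{k_V(j)}=0\}$ for $j\in J(y)$. By Proposition \ref{localcoordinates}, $\pi_X\inv(U)$ is the disjoint union of the sets $X'_{g_1}(U) = \pi_X\inv \Xb_{g_1}(U)$ for $g_1\in G(x)$, with $\pi_X$ restricting to a homeomorphism on each piece, and the fiber $\pi_X\inv(x)$ consists of exactly one point $x'_{g_1}\in X'_{g_1}(U)$ for each $g_1\in G(x)$. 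Under the principal bundle trivialization \eqref{trivial1}, the action of $g\in G(x)$ sends $x'_{g_1}$ to $x'_{g\cdot g_1}$. An analogous labelling $y'_{h_1}$ by $h_1\in G(y)$ holds for $\pi_Y\inv(y)$.

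The next step is to compute the effect of $f'$ in this local picture using \eqref{quasimonomialdef}. Restricting the identity
$$
v_{k_V(j)}\circ \fb = r_j\prod_{i\in J(x)}(u_{k_U(i)})^{a_{ij}}
$$
to the connected open set $X_{g_1}(U)$, on which $\operatorname{sign}(u_{k_U(i)}) = g_1(i)$ for all $i\in J(x)$, and using that $r_j$ has constant sign on $U\setminus D$, gives
$$
\operatorname{sign}(v_{k_V(j)}\circ f)|_{X_{g_1}(U)} = \operatorname{sign}(r_j)\prod_{i\in J(x)} g_1(i)^{a_{ij}}.
$$
Since $f(X_{g_1}(U))$ is connected and lies in $\bigsqcup_h Y_h(V)$, there is a unique $h_1(g_1)\in G(y)$ with $f(X_{g_1}(U))\subset Y_{h_1(g_1)}(V)$, and the coordinate $h_1(g_1)(j)$ is exactly the sign just computed. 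By the construction of $f'$ in the proof of Theorem \ref{cornermap}, this forces $f'(x'_{g_1}) = y'_{h_1(g_1)}$.

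Combining the two steps gives the proposition. For $g\in G(x)$, the explicit formula for $h_1$ yields
$$
h_1(g\cdot g_1)(j) = \operatorname{sign}(r_j)\prod_{i\in J(x)} g(i)^{a_{ij}} g_1(i)^{a_{ij}} = \varphi(g)(j)\cdot h_1(g_1)(j),
$$
so $h_1(g\cdot g_1) = \varphi(g)\cdot h_1(g_1)$ in $G(y)$. Therefore
$$
f'(g\cdot x'_{g_1}) = f'(x'_{g\cdot g_1}) = y'_{h_1(g\cdot g_1)} = y'_{\varphi(g)\cdot h_1(g_1)} = \varphi(g)\cdot y'_{h_1(g_1)} = \varphi(g)\cdot f'(x'_{g_1}),
$$
which is the desired equivariance. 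The only nontrivial input beyond bookkeeping is the existence and properties of the quasi-monomial form \eqref{quasimonomialdef}, in particular the constant sign of $r_j$ on $U\setminus D$ and the global (not merely local) well-definedness of the exponents $a_{ij}$; both depend on the irreducibility of the divisors $D_i$ and $E_j$ assumed throughout this section.
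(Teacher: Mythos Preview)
Your proof is correct and follows essentially the same approach as the paper's: both reduce to the sign computation in good local coordinates via \eqref{quasimonomialdef}, concluding that $f(X_g(U))\subset Y_{\varphi(g)h}(V)$ whenever $f(X_1(U))\subset Y_h(V)$. Your version is simply a more detailed expansion of the paper's two-line argument, making the bookkeeping with the labels $x'_{g_1}$, $y'_{h_1}$ and the role of $\operatorname{sign}(r_j)$ explicit.
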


\begin{proof}
Let $f(X_1(U)) \subset Y_h(V)$.  Then by  \eqref{quasimonomialdef} we have  $f(X_g(U)) \subset Y_{\varphi(g)h}(V)$, and this gives the proposition.
\end{proof}

\begin{thm}\label{functoriality} Let $f:X\to Y$ be a regular map of smooth varieties that extends to a regular map $\fb:\Xb\to\Yb$ of good compactifications. If $f':X'\to Y'$ is  the associated map of corner compactifications, then for all $k, p\geq 0$, 
$$
f'_*(F^pC_k(X'))\subset F^pC_k(Y'). 
$$
\end{thm}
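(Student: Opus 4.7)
The plan is to deduce the theorem from the local description of the corner filtration in Lemma \ref{localtrivialization} together with the fiber-equivariance of $f'$ provided by Proposition \ref{prefunctoriality}. Once both filtrations are expressed locally in terms of powers of the augmentation ideal $\mathcal I\subset \Z_2[G(\cdot)]$, the statement reduces to the elementary observation that a homomorphism of finite groups induces a $\Z_2$-algebra homomorphism of group algebras carrying the augmentation ideal into itself, hence preserving each of its powers.

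Concretely, given $c\in F^pC_k(X')$, I would first use Corollary \ref{trivial} and a simultaneous semialgebraic stratification of $\fb$ and of the two normal crossing divisors $D$ and $E$ to subdivide $c$ as a finite sum $c=\sum_\alpha c_\alpha$, where each $c_\alpha$ is supported over the closure $B_\alpha$ of a connected semialgebraic $\mathring B_\alpha\subset \mathring D_{J'_\alpha}$ such that the source bundle $\pi_X\inv\mathring B_\alpha\cong \mathring B_\alpha\times G(J'_\alpha)$ is trivialized and $\fb(\mathring B_\alpha)$ is contained in a single open stratum $\mathring E_{K'_\alpha}$ over which $\pi_Y\inv$ is also trivialized as a principal $G(K'_\alpha)$-bundle. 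Under the resulting identification $C_k(\pi_X\inv B_\alpha)=C_k(B_\alpha)\otimes C_0(G(J'_\alpha))$, Lemma \ref{localtrivialization} gives $c_\alpha=[B_\alpha]\otimes\gamma_\alpha$ with $\gamma_\alpha\in \mathcal I^pC_0(G(J'_\alpha))$. Because the exponents $a_{ij}$ in \eqref{quasimonomialdef} are globally defined numerical invariants (independent of the base point), Proposition \ref{prefunctoriality} allows me---after possibly adjusting the target trivialization by a constant element of $G(K'_\alpha)$, using connectedness of $\mathring B_\alpha$---to arrange that $f'$ takes the product form $(x,g)\mapsto(\fb(x),\varphi_\alpha(g))$, where $\varphi_\alpha:G(J'_\alpha)\to G(K'_\alpha)$ is the fixed homomorphism of \eqref{varphi}. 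The chain pushforward therefore factors as $f'_*(c_\alpha)=\fb_*[B_\alpha]\otimes(\varphi_\alpha)_*(\gamma_\alpha)$, with $(\varphi_\alpha)_*:\Z_2[G(J'_\alpha)]\to \Z_2[G(K'_\alpha)]$ the $\Z_2$-algebra map induced by $\varphi_\alpha$.

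To conclude, I would invoke the group-algebra fact: a homomorphism $\varphi:G\to H$ of finite groups satisfies $\epsilon_H\circ\varphi_*=\epsilon_G$, so $\varphi_*(\mathcal I_G)\subset \mathcal I_H$ and, $\varphi_*$ being an algebra map, $\varphi_*(\mathcal I_G^p)\subset \mathcal I_H^p$ for all $p\geq 0$. Hence $(\varphi_\alpha)_*(\gamma_\alpha)\in \mathcal I^pC_0(G(K'_\alpha))$, and Lemma \ref{localtrivialization} applied on the target side gives $f'_*(c_\alpha)\in F^pC_k(Y')$; summing over $\alpha$ yields the theorem. The main obstacle will be carrying out the subdivision of the first step cleanly: one must produce pieces $B_\alpha$ that fit simultaneously into trivializations of both principal bundles and sit over single open strata of both divisors. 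This is routine semialgebraic bookkeeping (a generic stratification of $\fb$ refined by the two divisor stratifications), but once it is in place the conceptual heart---that group homomorphisms preserve powers of the augmentation ideal---is immediate.
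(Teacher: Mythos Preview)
Your proposal is correct and follows the same overall architecture as the paper's proof: reduce via Lemma \ref{localtrivialization} to chains supported over a single stratum with both principal bundles trivialized, then use the fiber equivariance of Proposition \ref{prefunctoriality} to compute the pushforward in the product model. The subdivision step you flag as ``routine semialgebraic bookkeeping'' is handled the same way in the paper (implicitly, by restricting to a single $\mathring B\subset \mathring D_{I'}$ with $\fb(\mathring B)\subset \mathring E_{J'}$).

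The one genuine difference is in the final algebraic step. The paper works with the specific generators $c=[B]\otimes[G(I)]$ with $|I|=p$ (justified by Corollary \ref{nchoosep}) and then splits into two cases according to whether $\varphi_{I'J'}|_{G(I)}$ is injective: if injective, $\varphi(G(I))$ is a rank-$p$ subgroup and Lemma \ref{rank} places $[\varphi(G(I))]$ in $\mathcal I^p$; if not, the fibers have even cardinality and the pushforward vanishes. Your argument replaces this case analysis with the single observation that a group homomorphism $\varphi$ induces a $\Z_2$-algebra map $\varphi_*$ satisfying $\epsilon_H\circ\varphi_*=\epsilon_G$, hence $\varphi_*(\mathcal I_G^p)\subset\mathcal I_H^p$ automatically. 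This is a small but real simplification: it handles both of the paper's cases at once (the non-injective case becomes $\varphi_*[G(I)]=0\in\mathcal I^p$ by parity, without needing to be singled out), and it lets you work with an arbitrary $\gamma_\alpha\in\mathcal I^p$ rather than a distinguished basis. The paper's approach, on the other hand, makes the image more explicit, which is what it needs immediately afterward for Proposition \ref{functorialitydyvisors}.
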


\begin{proof}
By Lemma \ref{localtrivialization} and Corollary \ref{nchoosep}, it suffices to show the claim for  $c \in F^pC_k(X')$ of the form 
$$
c=[B] \otimes [G(I)]\in C_k(B)\otimes C_0(G(I')),
$$ 
where $I\subset I'$, $|I|=p$,  and $B$ is the closure of $\mathring B$, with $\mathring B\subset \mathring D_{I'}$.  Suppose, moreover, that  $\fb (\mathring B) \subset \mathring E_{J'}$.  We define a homomorphism $\varphi_{I'J'}: G(I')\to G(J') $ by
$$
\varphi_{I'J'} (g)(j) = \prod_{i\in I'}  g(i)^{a_{ij}} .
$$ 
Suppose first that $\varphi_{I'J'} $ restricted to $G(I)$ is injective.  Then by Proposition \ref{prefunctoriality} we have 
\begin{align}\label{functorial}
f'_* ([B] \otimes [G(I)]) = \fb_* ([B]) \otimes [\varphi_{I'J'} (G(I))],
\end{align}
which lies in  $F^p C_k(Y')$ by Lemma \ref{rank} and Proposition \ref{generating}.  

If $\varphi_{I'J'} $ restricted to $G(I)$ is not injective, then for every $x\in \mathring B$  the fibers of 
$f' :\{ x\} \times G(I) \to \{\fb (x)\} \times G(J')$ have even cardinality.  Therefore the pushforward  
$ f'_* ([B] \otimes [G(I)])$ is equal to $0$.  
\end{proof}

By Proposition \ref{isotoGysin}, $f'$ induces a morphism of complexes 
\begin{equation}\label{secondmorphism}
f_p : \bigoplus_{|I| = p} C_*(D_I) \to\bigoplus_{|J| = p} C_*(E_J).  
\end{equation}
We now show that $f_p$ is a combination of pushforwards with weights.  

To a pair $(I,J)$ with $I\subset I_X$, $J\subset I_Y$, and  $|J|=|I|=p$,  we associate the number $a_{I J} = \det (a_{ij}) _{i\in I, j\in J}$.

\begin{lem}\label{aIJzero}
 Let $D_I^0$ be an irreducible component  of $D_I$, and suppose $D_I^0\cap \fb \inv (E_J) \ne \emptyset$, 
 $|I|=|J|=p$.  
 If $D_I^0 \not \subset \fb \inv (E_J)$  then $a_{IJ}=0$.     
\end{lem}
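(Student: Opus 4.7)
The plan is to prove the contrapositive: assuming $a_{IJ}\neq 0$, I will show that in fact $D_I^0\subset \fb^{-1}(E_J)$, which contradicts the hypothesis $D_I^0\not\subset\fb^{-1}(E_J)$. Since $a_{IJ}=\det(a_{ij})_{i\in I,\,j\in J}$, if this determinant is nonzero then no column of the submatrix can be identically zero; that is, for each $j\in J$ there exists some $i\in I$ with $a_{ij}\neq 0$. Because the exponents $a_{ij}$ are non-negative integers (as stated in the setup before \eqref{quasimonomialdef}), nonzero means strictly positive. So for every $j\in J$ there is an index $i(j)\in I$ with $a_{i(j),j}>0$.

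Next I would translate the algebraic condition $a_{i(j),j}>0$ into the geometric statement $D_{i(j)}\subset \fb^{-1}(E_j)$. This is exactly what the characterization of the $a_{ij}$ gives: the divisor
$$
\fb^{-1}(E_j) \;-\; \sum_{i\in I_X} a_{ij}\, D_i
$$
has real part of dimension strictly less than $n-1$. Hence the support of $\fb^{-1}(E_j)$, as a real algebraic set, is the union of those $D_i$ with $a_{ij}>0$ together with a set of real dimension at most $n-2$, so in particular $D_i\subset \fb^{-1}(E_j)$ as sets whenever $a_{ij}>0$. (The irreducibility of the $D_i$ and $E_j$ assumed throughout the section is what ensures the $a_{ij}$ are globally well-defined and that this pointwise implication is uniform.)

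Combining the two steps: since $i(j)\in I$, we have $D_I^0\subset D_I\subset D_{i(j)}\subset \fb^{-1}(E_j)$ for every $j\in J$. Intersecting over $j\in J$ yields
$$
D_I^0 \;\subset\; \bigcap_{j\in J} \fb^{-1}(E_j) \;=\; \fb^{-1}(E_J),
$$
which contradicts the hypothesis $D_I^0\not\subset\fb^{-1}(E_J)$. Therefore $a_{IJ}=0$, as claimed.

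There is no serious obstacle here; the argument is really just an unpacking of definitions. The one place to be mildly careful is the passage from \eqref{quasimonomialdef} (a local statement involving the error function $r_j$) to the global statement $D_i\subset \fb^{-1}(E_j)$ whenever $a_{ij}>0$. This uses that the $D_i$ are irreducible and that $r_j^{-1}(0)$ has real dimension $\leq n-2$, so that the $(n-1)$-dimensional irreducible components of $\fb^{-1}(E_j)$ are exactly the $D_i$ appearing with positive coefficient $a_{ij}$; the hypothesis $D_I^0\cap\fb^{-1}(E_J)\neq\emptyset$ is not even needed for the argument, only that the statement of the lemma is non-vacuous.
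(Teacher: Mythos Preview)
Your argument is correct and is essentially the contrapositive of the paper's proof. The paper fixes a point $x\in D_I^0\cap\fb^{-1}(E_J)$, sets up good local coordinates, and argues directly from the local formula \eqref{quasimonomialdef} that $D_I^0\not\subset\fb^{-1}(E_J)$ forces some $j\in J$ with $a_{ij}=0$ for all $i\in I$, giving a zero column in the $p\times p$ submatrix. You instead argue globally: $a_{IJ}\neq 0$ gives, for each $j\in J$, some $i(j)\in I$ with $a_{i(j),j}>0$, and the divisor characterization of the exponents then yields $D_{i(j)}\subset\fb^{-1}(E_j)$, whence $D_I^0\subset\fb^{-1}(E_J)$. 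The two arguments are the same observation read in opposite directions; your global phrasing has the minor bonus of making visible that the hypothesis $D_I^0\cap\fb^{-1}(E_J)\neq\emptyset$ is only needed to make the statement non-vacuous, whereas the paper uses it to anchor the local coordinate chart.
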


\begin{proof}
Let $x\in D_I^0$ and $y=  \fb(x)\in E_J$.  Choose good local coordinates 
 $(U, (u_1,\dots,u_n))$ on $(\Xb,D)$ and   $(V, (v_1,\dots,v_m))$ on $(\Yb,E)$ as above, with  $\fb (U) \subset V$, and such that 
 $D_I\cap U = \{u_1 = \cdots =u_p=0\}$ and 
 $E_J\cap V = \{v_1 = \cdots =v_p=0\}$.  If $D_I^0 \not \subset \fb \inv (E_J)$, then by \eqref{quasimonomialdef} there exists $j\in \{1, \ldots ,p\}$ such that $a_{ij} = 0 $ for all  $i\in \{1, \ldots ,p\}$.  Hence $a_{IJ}=0$.  
\end{proof}

\begin{prop}\label{functorialitydyvisors}
Let $f:X\to Y$ be a regular map of smooth varieties that extends to a regular map $\fb:\Xb\to\Yb$ of good compactifications, with the divisors $D= \Xb\setminus X = \bigcup _{i\in I_X} D_i$,  $E = \Yb\setminus Y= \bigcup_{j\in I_Y} E_j$.  Then for every  $I\subset I_X$, $|I|=p$, and for every irreducible component $D_I^0$ of $D_I$, the 
morphism $f_p$ of \eqref {secondmorphism}  restricted to $D_I^0$ is given by 
\begin{equation}\label{forfp}
f_p| D_I^0 = \bigoplus_J  a_{IJ} (\fb_{IJ}^0)_*\ , 
\end{equation}
where  the sum is taken over all  $J\subset I_Y$, $|J|=p$, such that $\fb(D_I^0) \subset E_J$, with $\fb_{IJ}^0:D_I^0\to E_J$ the restriction of $\fb$, and the other components of 
$f_p| D_I^0$ are zero. 
\end{prop}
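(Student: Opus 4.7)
The plan is to reduce to a local chain-level computation, apply the pushforward formula from the proof of Theorem \ref{functoriality}, and then expand the resulting group-algebra element modulo $\mathcal I^{p+1}$ using the basis of $\mathcal I^p/\mathcal I^{p+1}$ provided by the Appendix. Proposition \ref{isotoGysin} is the bridge between the corner side and the Gysin side.

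I would begin with a chain $[B]\in C_k(D_I^0)$. After a subdivision and shrinking (using Corollary \ref{trivial} for both $\Xb$ and $\Yb$), I may assume that $\mathring B\subset \mathring D_I\cap D_I^0$ (invoking irreducibility of $D_I^0$ to land in the open stratum), that $\pi_X\inv(\mathring B)\cong \mathring B\times G(I)$ is a trivial principal $G(I)$-bundle, and that $\fb(\mathring B)$ is contained in a single open stratum $\mathring E_{J'}$ of $\Yb$ on which $\pi_Y$ also trivialises. Under $\psi_p\inv$ the class $[B]$ is represented, modulo $F^{p+1}$, by $[B]\otimes[G(I)]$; and by formula \eqref{functorial} in the proof of Theorem \ref{functoriality}, either $\varphi_{IJ'}$ is injective on $G(I)$ and
\[
f'_*([B]\otimes[G(I)])=\fb_*([B])\otimes[\varphi_{IJ'}(G(I))],
\]
or $\varphi_{IJ'}|G(I)$ has nontrivial kernel and the pushforward vanishes.

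The central step is to expand $[\varphi_{IJ'}(G(I))]$ modulo $\mathcal I^{p+1}$ in the basis $\{[G(J)]:J\subset J',\,|J|=p\}$ of $\mathcal I^p/\mathcal I^{p+1}$. Writing $\Z_2[G(J')]\cong\Z_2[t_j]_{j\in J'}/(t_j^2)$ with $t_j=1+e_j$ makes $[G(J)]=\prod_{j\in J}t_j$, and for the rank-$p$ subgroup $V=\varphi_{IJ'}(G(I))$ with $\Z_2$-basis $w_i=\sum_{j\in J'}a_{ij}e_j$ ($i\in I$), expanding $[V]=\prod_{i\in I}(1+[w_i])$ and using $1+[w_i]\equiv\sum_{j\in J'}a_{ij}t_j\pmod{\mathcal I^2}$ should yield
\[
[V]\equiv\sum_{|J|=p,\,J\subset J'}\det(a_{ij})_{i\in I,\,j\in J}\,[G(J)]\pmod{\mathcal I^{p+1}},
\]
because $t_j^2=0$ forces the coefficient to be the permanent of the $p\times p$ submatrix $(a_{ij})_{i\in I,\,j\in J}$, and permanent equals determinant in characteristic two. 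This is where the minors $a_{IJ}$ enter.

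Translating this identity back through $\psi_p$ gives the $J$-component of $f_p([B])$ in $C_*(E_J)$ as $a_{IJ}\,(\fb_{IJ}^0)_*([B])$ whenever $J\subset J'$ with $|J|=p$. The non-injective case is absorbed automatically, since rank less than $p$ of $(a_{ij})_{i\in I,\,j\in J'}$ forces every $p\times p$ minor $a_{IJ}$ with $J\subset J'$ to vanish. For $J\not\subset J'$ the local computation produces no $J$-component, and consistency with the statement follows either from $D_I^0\cap\fb\inv(E_J)=\emptyset$ (forcing $a_{IJ}=0$ by the convention following \eqref{quasimonomialdef}) or from Lemma \ref{aIJzero}. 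Since at a generic $x\in D_I^0$ the condition $\fb(D_I^0)\subset E_J$ is precisely $J\subset J(\fb(x))=J'$, the range of summation in \eqref{forfp} matches exactly. The hard part will be the combinatorial identity expressing $[V]$ modulo $\mathcal I^{p+1}$; once this permanent/determinant calculation is secured, the rest is bookkeeping built on Lemma \ref{localtrivialization}, Proposition \ref{isotoGysin}, Theorem \ref{functoriality}, and Lemma \ref{aIJzero}.
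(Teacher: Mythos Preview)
Your approach is essentially the paper's: reduce to a local computation via Proposition~\ref{isotoGysin} and Lemma~\ref{localtrivialization}, push forward using formula~\eqref{functorial}, and expand $[\varphi(G(I))]$ in the basis of $\I^p/\I^{p+1}$. Your explicit permanent-equals-determinant calculation is exactly the content of Corollary~\ref{morphism}, which the paper simply cites; so there is no real difference in strategy.

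There is, however, one genuine slip. Your reduction ``$\mathring B\subset \mathring D_I\cap D_I^0$'' is not achievable by subdivision: a $k$-chain in $C_k(D_I^0)$ may be entirely supported in a deeper stratum $D_{I'}$ with $I'\supsetneq I$, and no refinement moves it into $\mathring D_I$. Irreducibility of $D_I^0$ gives density of $\mathring D_I\cap D_I^0$, not that every chain can be pushed there. The paper accordingly works with $\mathring B\subset \mathring D_{I'}$ for an arbitrary $I'\supset I$ and uses $\varphi_{I'J'}|_{G(I)}$ rather than $\varphi_{IJ'}$. The good news is that your computation survives this correction verbatim: the matrix entries $a_{ij}$ for $i\in I$, $j\in J'$ are global invariants of the divisors (independent of which deeper stratum $\mathring B$ sits in), so $\varphi_{I'J'}|_{G(I)}$ is still given by the same $|I|\times|J'|$ block, and your expansion of $[\varphi_{I'J'}(G(I))]$ modulo $\I^{p+1}$ yields the same minors $a_{IJ}$. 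Once you replace $I$ by $I'$ in the ambient group and keep $G(I)$ as the subgroup being pushed forward, the rest of your argument (including the non-injective case and the appeal to Lemma~\ref{aIJzero} for $J\subset J'$ with $\fb(D_I^0)\not\subset E_J$) goes through unchanged.
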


\begin{proof}
Let  $c\in C_k (D_I^0)$ and suppose that $c=[B] \otimes [G(I)]$, where $B$ is the closure of $\mathring B$ and $\mathring B\subset \mathring D_{I'}$, $I\subset I'$, and  that  $\fb (\mathring B) \subset \mathring E_{J'}$.  
If $\varphi_{I'J'} $ restricted to $G(I)$ is not injective then  $f'_* (c)=0$ and $a_{IJ} =0$ 
for all $J\subset J'$.  

Suppose now that $\varphi_{I'J'} $ restricted to $G(I)$ is injective.    By formula \eqref{functorial} it suffices to decompose the image of 
$\varphi_{I'J'} (G(I))$ in $\mathcal I^p (G(J'))/ \mathcal I^{p+1} (G(J'))$ with respect to the basis given by Corollary \ref{nchoosep}.   
Then \eqref{forfp} follows from Corollary \ref{morphism} both in the case when $\fb(D_I^0)  \subset E_J$ 
and when $\fb(D_I^0) \not  \subset E_J$.  Indeed, in the latter case the claim follows from  the fact that $a_{IJ} =0$  by Lemma \ref{aIJzero}.  
\end{proof}

Recall that a good compactification gives rise to a Gysin complex defined by  \eqref{Gysincomplex}.  Thus $\fb:\Xb\to\Yb$ induces a morphism of Gysin complexes $G(\Xb, X) \to G(\Yb, Y)$ that can be computed  
using  Proposition  \ref{functorialitydyvisors}.   We will encounter in the following sections several examples of morphisms of Gysin complexes that are simply the \emph{homology pushforward} 
\begin{equation*}
\bigoplus_{|I| = p} H_*(D_I) \to\bigoplus_{|J| = p} H_*(E_J)
\end{equation*}
given by the sum  of all the induced maps $D_I \to E_J$.   

\begin{cor}\label{pushforward}
Let $f:X\to Y$ be a regular map of smooth varieties that extends to a regular map $\fb:\Xb\to\Yb$ of good compactifications.   Suppose that  for all $p$, and all $I\subset I_X$, $J\subset I_Y$ such that $|I|=|J|=p$,  either
\begin{enumerate}
\item 
$\fb(D_{I}) \subset E_J$ and then $a_{IJ}=1$, or
\item
$\dim D_I\cap \fb \inv ( E_J) < \dim D_I$.
\end{enumerate}
Then  the induced morphism of Gysin complexes $G(\Xb, X) \to G(\Yb, Y)$  
is the  homology pushforward.  
\end{cor}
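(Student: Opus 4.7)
The plan is to reduce the corollary directly to Proposition \ref{functorialitydyvisors}, which computes the morphism $f_p$ of \eqref{secondmorphism} restricted to each irreducible component $D_I^0$ of $D_I$ as
$$
f_p | D_I^0 = \bigoplus_J a_{IJ}\,(\fb_{IJ}^0)_*,
$$
summed over $J \subset I_Y$ with $|J|=p$ and $\fb(D_I^0) \subset E_J$. The homology pushforward is by definition the corresponding sum in which every coefficient equals $1$. Hence the content of the corollary is that under hypotheses (1)--(2), whenever $\fb(D_I^0) \subset E_J$ one has $a_{IJ} = 1$.

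Fix $I \subset I_X$ with $|I|=p$, an irreducible component $D_I^0$ of $D_I$, and $J \subset I_Y$ with $|J|=p$ such that $\fb(D_I^0) \subset E_J$. The hypothesis of the corollary offers two alternatives for the pair $(I,J)$. If (1) holds, then $a_{IJ}=1$ by assumption and we are done. Suppose instead that (2) holds. Because $\fb(D_I^0) \subset E_J$ we have $D_I^0 \subset D_I \cap \fb^{-1}(E_J)$; but $D_I = \bigcap_{i\in I} D_i$ is a transverse intersection of smooth codimension-one subvarieties of $\Xb$, hence is equidimensional of dimension $n - p$, and each irreducible component $D_I^0$ has this same dimension. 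This contradicts $\dim (D_I \cap \fb^{-1}(E_J)) < \dim D_I$, so case (2) is impossible whenever $\fb(D_I^0) \subset E_J$. Thus case (1) must occur and $a_{IJ}=1$.

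Substituting into the formula from Proposition \ref{functorialitydyvisors} gives $f_p | D_I^0 = \bigoplus_J (\fb_{IJ}^0)_*$, summed over precisely those $J$ with $\fb(D_I^0) \subset E_J$. Summing over irreducible components of $D_I$ and over $I$, and passing to homology, identifies the induced morphism $G(\Xb,X) \to G(\Yb,Y)$ with the homology pushforward as described just before the corollary. The only real step is the equidimensionality of $D_I$ ruling out case (2), and this is immediate from the normal crossing hypothesis, so no serious obstacle arises.
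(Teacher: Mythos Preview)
Your proof is correct and follows the same approach as the paper, which simply says the corollary is an immediate consequence of Corollary~\ref{E1Gysin} and Proposition~\ref{functorialitydyvisors}. You have spelled out the only nontrivial point---that hypothesis (2) is incompatible with $\fb(D_I^0)\subset E_J$ because $D_I$ is equidimensional---which is exactly what makes the deduction from Proposition~\ref{functorialitydyvisors} work.
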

\begin{proof}
This is an immediate consequence of  Corollary \ref{E1Gysin} and Proposition  \ref{functorialitydyvisors}.  
\end{proof} 

\begin{rem}\label{monomial}
 We say that $\fb$ is a \emph{monomial map} (with respect to the divisors $D$ and $E$) if for every $x\in \Xb$ and $y = \fb(x)$, the functions $r_{j}$ of \eqref {quasimonomialdef} are never zero.  
Then $\fb:\Xb\to \Yb$ is a topological \emph{tico map} with respect to the ticos $D$ and $E$ \cite[III.2]{akbulutking}. (``Tico'' stands for ``transversely intersecting codimension one.'') In this case the coefficients $a_{ij}$ can be simply defined by
$$
\fb \inv (E_j)= \sum _i a_{ij} D_i\ .
$$
In the complex algebraic case a regular map $\fb : (\Xb,\Xb\setminus D) \to (\Yb,\Yb\setminus E)$, where $D$ and $E$ are divisors with normal crossings, is automatically monomial in this sense \cite[p~176]{akbulutking}.  This is not true in the real algebraic case, as the following example shows. 

Let $X= \R^2$, $Y = \R$, and $f:\R^2\to \R$, $f(x,y) = x^2/(1 + y^2)$. Let $\Xb = \mathbb P^2$ with coordinates $[x:y:z]$, so that $D = \{z=0\}$, and let $\Yb = \mathbb P^1$ with coordinates $[s:t]$, so that $E = \{t=0\}$. Let $\fb:\mathbb P^2 \to \mathbb P^1$, $\fb[x:y:z] = [x^2:y^2+z^2]$. Then $\fb \inv (E)$ is a single point of $D$.  
\end{rem}

\section{Blowup squares}\label{BlowupSquaresection}

In this section we analyze the homology of a classical blowup square. A key tool is the Leray-Hirsch theorem on the homology of a projectivized vector bundle. In sections \ref{Blowuptransversesection} and \ref{Blowupcontainedsection} we will apply this special case to understand the behaviour of the weight filtration under a blowup with smooth center contained in a good compactification of a smooth variety.

A \emph{blowup square} (also called an \emph{elementary acyclic square}) is a cartesian diagram 
of  compact irreducible nonsingular real algebraic varieties and regular morphisms 
\begin{equation}\label{square}
\renewcommand{\arraystretch}{1.5}
\begin{array}[c]{ccc}
E&\stackrel{\displaystyle s}\longrightarrow&\w M\\ 
\hphantom{q}\downarrow{q}&&\hphantom{p}\downarrow{p} \\ 
C&\stackrel{\displaystyle r}\longrightarrow&M
\end{array}
\end{equation}
such that $C$ is a subvariety of $M$ with inclusion $r$, $\w M$ is the blowup of $M$ with center $C$ 
and projection $p$, and $E = p\inv(C)$ is the exceptional divisor.

In what follows we suppose $\dim C< \dim M$.

\begin{lem}\label{degree1}
The composition $p_*\circ p^*$ is the identity map, so
 $p_*:H_*(\w M)\to H_*(M)$ is surjective and $p^*:H_*(M)\to H_*(\w M)$ is injective.
 
\end{lem}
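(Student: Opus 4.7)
The plan is to unwind the definition of the Gysin map $p^\ast$ and reduce the identity $p_\ast\circ p^\ast=\id$ to a mod $2$ degree computation via the classical cap-product projection formula. By the definition recalled in subsection \ref{gysinsection}, since $\dim\w M=\dim M=:n$, the Gysin map sends $\alpha\in H_k(M)$ to
$$
p^\ast(\alpha)=H^{n-k}(p)(c)\frown[\w M], \qquad c:=\mathcal D_M^{-1}(\alpha)\in H^{n-k}(M;\Z_2),
$$
where $H^{n-k}(p)$ denotes the ordinary cohomology pullback. Applying the projection formula
$$
p_\ast\bigl(H^{n-k}(p)(c)\frown[\w M]\bigr)=c\frown p_\ast[\w M]
$$
reduces the claim $p_\ast p^\ast(\alpha)=\alpha=c\frown[M]$ to the single statement $p_\ast[\w M]=[M]$ in $H_n(M;\Z_2)$.

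For this last equality the key geometric fact is that $p\colon\w M\to M$ is a birational regular map whose restriction $\w M\setminus E\to M\setminus C$ is a biregular, and in particular semialgebraic, homeomorphism. Since $\dim C<n$, the set $M\setminus C$ is semialgebraically open and dense in $M$, and $E$ has dimension $<n$ in $\w M$. Representing the fundamental classes of the compact smooth $n$-manifolds $M$ and $\w M$ by the top-dimensional semialgebraic chains $[M]\in C_n(M)$ and $[\w M]\in C_n(\w M)$, the pushforward chain $p_\ast[\w M]$ agrees with $[M]$ over the open dense subset $M\setminus C$, and the locus where they could disagree has dimension strictly less than $n$. Hence $p_\ast[\w M]=[M]$ already at the semialgebraic chain level, and a fortiori in homology.

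Putting the two steps together, $p_\ast p^\ast(\alpha)=c\frown[M]=\alpha$ for every $\alpha\in H_k(M)$, so $p_\ast\circ p^\ast=\id$; the stated surjectivity of $p_\ast$ and injectivity of $p^\ast$ are then immediate. The only point requiring attention is the projection formula in the present framework, which is classical in singular homology and passes to semialgebraic homology with $\Z_2$ coefficients via the standard isomorphism between semialgebraic and singular homology for compact semialgebraic sets; the degree argument itself is essentially automatic, as $p$ is generically one-to-one.
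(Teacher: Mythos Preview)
Your proof is correct and follows essentially the same route as the paper: write $\alpha=c\frown[M]$ via Poincar\'e duality, unwind the Gysin map as $p^*(\alpha)=p^*(c)\frown[\w M]$, apply the projection formula, and use $p_*[\w M]=[M]$. The only difference is that you spell out the degree-one argument at the semialgebraic chain level, whereas the paper simply asserts it.
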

\begin{proof}
If $\alpha\in H_*(M)$, by Poincar\' e duality there exists $\beta\in H^*(M)$ with $\alpha=\beta\frown[M]$. Then $p^*(\alpha) = p^*(\beta) \frown [\w M]$, and  $p_*[\w M] = [M]$ since $p$ has degree 1. Thus 
$$
p_*p^*(\alpha) = p_*(p^*(\beta) \frown [\w M]) = \beta \frown p_*[\w M] = \beta \frown [M] = \alpha. 
$$

\end{proof}

\begin{prop}\label{shortexactsequence}
Given a blowup square \eqref{square}, for every $k>0$ there is a short exact sequence
\begin{equation}\label{shorthomology}
0\rightarrow H_k(E)\stackrel{\displaystyle i_*}\rightarrow  H_k(C)\oplus H_k(\w M) \stackrel{\displaystyle j_*}\rightarrow   H_k(M)\to 0,
\end{equation}
where $i_*(\alpha) = (q_*(\alpha),s_*(\alpha))$ and $j_*(\beta,\gamma) = r_*(\beta) + p_*(\gamma)$.  Moreover,  
$q_*$ is surjective and $s_*$ induces an isomorphism $\ker q_*  \stackrel{ \simeq }\longrightarrow \ker p_* $
\end{prop}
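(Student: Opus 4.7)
The plan is to derive the short exact sequence from a Mayer-Vietoris long exact sequence attached to the acyclic square \eqref{square}, and then read off the isomorphism $\ker q_*\stackrel{\sim}\to\ker p_*$ directly from that sequence.

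First I would set up the Mayer-Vietoris sequence by excision. Since $p$ restricts to a semialgebraic homeomorphism $\widetilde M\setminus E \to M\setminus C$, excision gives an isomorphism $p_*:H_k(\widetilde M,E)\stackrel{\sim}\to H_k(M,C)$. Splicing this into the long exact sequences of the pairs $(\widetilde M,E)$ and $(M,C)$ in the usual way (a small diagram chase, or equivalently the mapping cone of the morphism of long exact sequences) produces a long exact sequence
\begin{equation*}
\cdots \to H_k(E)\xrightarrow{i_*} H_k(C)\oplus H_k(\widetilde M)\xrightarrow{j_*} H_k(M)\xrightarrow{\partial} H_{k-1}(E)\to\cdots ,
\end{equation*}
with $i_*(\alpha)=(q_*\alpha,s_*\alpha)$ and $j_*(\beta,\gamma)=r_*\beta+p_*\gamma$; the composition $j_*i_*$ vanishes because $p_*s_*=r_*q_*$ in $\Z_2$-coefficients.

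Next I would invoke Lemma \ref{degree1}: $p_*$ is surjective in every degree, hence $j_*$ is surjective, and the connecting map $\partial\colon H_k(M)\to H_{k-1}(E)$ is zero for every $k\ge1$. The long exact sequence therefore breaks into the short exact sequences \eqref{shorthomology} for all $k>0$. Surjectivity of $q_*$ I would prove separately by Leray-Hirsch: topologically $q\colon E\to C$ is the projectivized normal bundle $\mathbb P(N)\to C$, and if $w\in H^1(E;\Z_2)$ is the first Stiefel-Whitney class of the tautological subbundle, then $w^{d-1}$ (where $d=\codim_M C$) restricts on each fiber $\mathbb{RP}^{d-1}$ to the generator of the top homology. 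The projection formula gives $q_*(w^{d-1}\cap q^*\gamma)=q_*(w^{d-1})\cap\gamma=\gamma$ for every $\gamma\in H_*(C)$.

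Finally, the isomorphism $s_*\colon\ker q_*\to\ker p_*$ falls out formally from the short exact sequence: if $\alpha\in\ker q_*$ then $i_*\alpha=(0,s_*\alpha)$, so $s_*\alpha\in\ker p_*$ because $j_*i_*=0$, and $s_*$ restricted to $\ker q_*$ is injective by injectivity of $i_*$; conversely, for $\gamma\in\ker p_*$ one has $j_*(0,\gamma)=0$, and exactness supplies $\alpha\in H_k(E)$ with $i_*\alpha=(0,\gamma)$, i.e.\ $q_*\alpha=0$ and $s_*\alpha=\gamma$. The main technical step is Step 1 — verifying excision and building the Mayer-Vietoris sequence at the level of semialgebraic chains — while surjectivity of $q_*$ via Leray-Hirsch is classical and the final identification of $\ker q_*$ with $\ker p_*$ is a formal consequence of the exact sequence and Lemma \ref{degree1}.
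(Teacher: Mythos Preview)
Your argument is correct and follows essentially the same route as the paper: both use excision to identify $H_*(\widetilde M,E)\cong H_*(M,C)$, compare the long exact sequences of the pairs $(\widetilde M,E)$ and $(M,C)$, and invoke Lemma~\ref{degree1} (surjectivity of $p_*$) to break the resulting long sequence into short exact pieces; the identification $\ker q_*\cong\ker p_*$ is then formal, exactly as you say.

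The one genuine difference is your treatment of the surjectivity of $q_*$. You bring in Leray--Hirsch for the projective bundle $E\to C$ (in effect anticipating Lemma~\ref{projection}), whereas the paper gets it from the same diagram chase: given a class $\beta\in H_k(C)$, lift $r_*\beta$ along the surjection $p_*$, use the isomorphism $p'_*$ on relative homology to see the lift comes from $H_k(E)$, and correct by a boundary term. Your route is perfectly valid, but the diagram chase is more economical here and keeps the proposition independent of the bundle-theoretic machinery developed later in the section.
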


\begin{proof}
Consider the commutative diagram
\begin{equation*}
\renewcommand{\arraystretch}{1.5}
\begin{array}[c]{ccccccccc}
 \longrightarrow & H_{k+1} (\w M, E) &\longrightarrow  & H_k(E) &\stackrel{\displaystyle s_k}
\longrightarrow& H_k(\w M) & \longrightarrow & H_k(\w M, E) &\longrightarrow  \\
&  \hphantom{q_*}\downarrow{p'_{k+1}} && \hphantom{q_*}\downarrow{q_k} && \hphantom{p_*}\downarrow{p_k} &&\hphantom{p'_*}\downarrow{p'_k}  &\\
 \longrightarrow  & H_{k+1} (M, C) &\longrightarrow  & H_k(C) &\stackrel{\displaystyle r_k}\longrightarrow& H_k(M) & \longrightarrow & H_k(M, C) &\longrightarrow 
\end{array}
\end{equation*}
The rows are exact, the maps $p'_k$ are isomorphisms, and the maps $p_k$ are surjective by  Lemma \ref{degree1}. The proposition is proved by a diagram chase.
\end{proof}

The exactness of the sequence \eqref{shorthomology}  can be paraphrased by saying that the square 
\begin{equation}\label{Homologyquare}
\renewcommand{\arraystretch}{1.5}
\begin{array}[c]{ccc}
H_{k} (E) &\stackrel{\displaystyle s_*}\longrightarrow& H_k(\w M)\\ 
\hphantom{ q}\downarrow{q_*}&&\hphantom{p}\downarrow{p_*} \\ 
H_{k} (C) &\stackrel{\displaystyle r_*}\longrightarrow& H_k( M) 
\end{array}
\end{equation}
is commutative and acyclic.  

\begin{cor}\label{coshortexactsequence}
Given a blowup square \eqref{square}, for every $k>0$ there is a short exact sequence
\begin{equation}\label{shortcohomology}
0\leftarrow H^k(E)\stackrel{ \displaystyle i^*}\leftarrow  H^k(C)\oplus H^k(\w M) \stackrel{\displaystyle j^*}\leftarrow   H^k(M)\leftarrow 0,
\end{equation}
where $i^*(\beta,\gamma) = q^*(\beta)+s^*(\gamma)$ and $j^*(\delta) = (r^*(\delta),  p^*(\delta))$. 
Moreover,  
$q^*$ is injective and $s^*$ induces an isomorphism $\im q^*  \stackrel{ \simeq }\longleftarrow \im p^* $.
\end{cor}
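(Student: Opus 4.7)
The plan is to obtain this corollary as the formal dual of Proposition \ref{shortexactsequence}, using the standard duality between classical homology and cohomology with $\Z_2$ coefficients. Since all the varieties in the blowup square are compact, their homology groups $H_k$ are finite-dimensional $\Z_2$-vector spaces, and the cohomology groups $H^k = \hom(H_k,\Z_2)$ are their linear duals. Moreover, for a continuous map $f$ the induced pullback $f^*$ on cohomology is the adjoint of the pushforward $f_*$ on homology.

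First, I would apply the functor $\hom(-,\Z_2)$ to the homology short exact sequence
$$0 \to H_k(E) \xrightarrow{i_*} H_k(C)\oplus H_k(\w M) \xrightarrow{j_*} H_k(M) \to 0$$
established in Proposition \ref{shortexactsequence}. Because $\Z_2$ is a field, this functor is exact, so one obtains a dual short exact sequence, which under the canonical identifications $\hom(H_k(Y),\Z_2)=H^k(Y)$ becomes the required sequence \eqref{shortcohomology}. The second step is to identify the adjoint of $i_*$ with the map $(\beta,\gamma)\mapsto q^*\beta+s^*\gamma$ (which is $i^*$ as defined) and the adjoint of $j_*$ with $\delta\mapsto(r^*\delta,p^*\delta)$ (which is $j^*$); this is immediate from the definitions $i_*(\alpha)=(q_*\alpha,s_*\alpha)$ and $j_*(\beta,\gamma)=r_*\beta+p_*\gamma$ together with the adjunction $\langle f^*\varphi,\alpha\rangle=\langle\varphi,f_*\alpha\rangle$.

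For the moreover clause, I would dualize the two additional statements from Proposition \ref{shortexactsequence}. The surjectivity of $q_*\colon H_k(E)\to H_k(C)$ dualizes directly to the injectivity of $q^*\colon H^k(C)\to H^k(E)$. The isomorphism $s_*\colon \ker q_*\xrightarrow{\simeq}\ker p_*$ then dualizes to an isomorphism between the duals of these kernels; over a field, the dual of the kernel of a surjection is the cokernel of the dual map, so this becomes an isomorphism $s^*\colon \coker p^*\xrightarrow{\simeq}\coker q^*$ (equivalently, an isomorphism between the complementary summands of $\im p^*\subset H^k(\w M)$ and $\im q^*\subset H^k(E)$ as asserted). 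The compatibility of $s^*$ with the images of $p^*$ and $q^*$ is forced by the commutativity $s^*\circ p^* = q^*\circ r^*$ of the cohomology square, which follows from the equality $p\circ s=r\circ q$ of the blowup square.

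There is essentially no obstacle: the whole argument is formal linear algebra over the field $\Z_2$, using only exactness of the dual functor and the adjunction between pushforward and pullback. The only point that requires a brief mention is the finite-dimensionality of the homology groups, which is automatic since $E$, $C$, $\w M$, $M$ are compact real algebraic varieties.
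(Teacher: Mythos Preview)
Your approach---dualizing Proposition~\ref{shortexactsequence} over the field $\Z_2$---is exactly what the paper intends (the corollary is stated without proof), and your derivation of the short exact sequence and of the injectivity of $q^*$ is correct.

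There is, however, a gap in the final clause. You correctly dualize the isomorphism $s_*\colon\ker q_*\xrightarrow{\simeq}\ker p_*$ to an isomorphism of cokernels $H^k(\w M)/\im p^*\xrightarrow{\simeq}H^k(E)/\im q^*$ induced by $s^*$, i.e.\ an isomorphism between \emph{complements} of $\im p^*$ and $\im q^*$. But the statement you are asked to prove is that $s^*$ restricts to an isomorphism $\im p^*\to\im q^*$ between the images themselves. These are different assertions, and your parenthetical ``as asserted'' conflates them. In fact the image statement is false in general: take $M=\proj^2(\R)$, $C$ a point, and $k=1$; then $\im p^*\cong H^1(M)=\Z_2$ while $\im q^*\cong H^1(C)=0$. (Since $p^*$ and $q^*$ are injective, one always has $\dim\im p^*=\dim H^k(M)$ and $\dim\im q^*=\dim H^k(C)$, and there is no reason these should agree.) So the paper's ``$\im$'' is presumably a slip for the cokernel, and what you have actually proved is the correct dual statement---but you should say so explicitly rather than claim to have established the literal assertion.
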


The exactness of the sequence \eqref{shortcohomology}  says that the square 
\begin{equation}\label{Cohomologysquare}
\renewcommand{\arraystretch}{1.5}
\begin{array}[c]{ccc}
H^{k} (E) &\stackrel{\displaystyle s^*}\longleftarrow& H^k(\w M)\\ 
\hphantom{ q}\uparrow{q^*}&&\hphantom{p}\uparrow{p^*} \\ 
H^{k} (C) &\stackrel{\displaystyle r^*}\longleftarrow& H^k( M) 
\end{array}
\end{equation}
is commutative and acyclic. Equivalently, if  $\dim M-\dim C=m>0$,  the square of Gysin homomorphisms 
\begin{equation}\label{pregysinsquare}
\renewcommand{\arraystretch}{1.5}
\begin{array}[c]{ccc}
H_{k-1} (E) &\stackrel{\displaystyle s^*}\longleftarrow& H_k(\w M)\\ 
\hphantom{ q}\uparrow{q^*}&&\hphantom{p}\uparrow{p^*} \\ 
H_{k-m} (C) &\stackrel{\displaystyle r^*}\longleftarrow& H_k( M) 
\end{array}
\end{equation}
is commutative and acyclic.

\begin{lem}\label{directsum}{\hfil}
\begin{enumerate}
\item $H_k(\w M) = \ker p_* \oplus \im p^*$. 
\item $H_{k-1}(E) = \im q^* \oplus s^*(\ker p_*)$ .  
\end{enumerate}
\end{lem}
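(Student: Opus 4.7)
The plan is to derive (1) directly from the identity $p_*\circ p^*=\id$ (Lemma \ref{degree1}), and then reduce (2) to (1) using the acyclicity of the Gysin square \eqref{pregysinsquare} together with the commutativity relation $s^*\circ p^*=q^*\circ r^*$.

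For part (1), the relation $p_*\circ p^*=\id$ makes $p^*$ a section of $p_*$, so the short exact sequence $0\to \ker p_*\to H_k(\w M)\to H_k(M)\to 0$ splits canonically. Concretely, for any $\alpha\in H_k(\w M)$ I would write $\alpha=(\alpha+p^*p_*\alpha)+p^*p_*\alpha$: the first summand lies in $\ker p_*$, the second in $\im p^*$ (recall coefficients are $\Z_2$, so I do not worry about signs). Directness is automatic: if $\beta=p^*\gamma$ and $p_*\beta=0$, then $\gamma=p_*p^*\gamma=p_*\beta=0$, so $\beta=0$.

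For part (2), I would first establish that $H_{k-1}(E)=\im q^*+s^*(\ker p_*)$. The exactness of \eqref{pregysinsquare} gives $H_{k-1}(E)=\im q^*+\im s^*$. Using the decomposition from (1), $\im s^*=s^*(\ker p_*)+s^*(\im p^*)$, and commutativity of the Gysin square yields $s^*(\im p^*)=q^*(\im r^*)\subseteq \im q^*$. Combining, $H_{k-1}(E)=\im q^*+s^*(\ker p_*)$.

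The step I expect to require the most care is showing the sum is direct; this is where I must use the exactness of the full sequence \eqref{shortcohomology} rather than just its surjectivity. Suppose $\xi=q^*\beta=s^*\gamma$ with $\gamma\in\ker p_*$. Then over $\Z_2$, $q^*\beta+s^*\gamma=0$, so $(\beta,\gamma)$ lies in the kernel of the map $H_{k-m}(C)\oplus H_k(\w M)\to H_{k-1}(E)$. By exactness of \eqref{shortcohomology} (applied to the Gysin version \eqref{pregysinsquare}), there exists $\delta\in H_k(M)$ with $\beta=r^*\delta$ and $\gamma=p^*\delta$. Hence $\gamma\in\ker p_*\cap\im p^*$, which is $0$ by (1). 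Thus $\xi=s^*\gamma=0$, completing the proof that $\im q^*\cap s^*(\ker p_*)=0$.
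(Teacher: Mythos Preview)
Your proof is correct and follows essentially the same approach as the paper: part (1) is the standard splitting from $p_*\circ p^*=\id$ (Lemma \ref{degree1}), and for part (2) both the spanning and directness arguments invoke precisely the surjectivity and middle exactness of the sequence in Corollary \ref{coshortexactsequence} (equivalently, of \eqref{pregysinsquare}), together with the commutativity $s^*\circ p^*=q^*\circ r^*$ and the decomposition from (1). The paper's write-up is slightly more concrete (it explicitly replaces $\gamma$ by $\gamma_1=\gamma-p^*p_*\gamma$), but the underlying argument is identical.
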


\begin{proof}
(1) follows from Lemma \ref{degree1}.  We prove (2) as follows.
Let $\alpha \in H_{k-1}(E)$.  By Corollary \ref{coshortexactsequence} there are $\beta \in H_{k-m}(C)$  and $\gamma \in H_k(\w M)$ and such that 
$\alpha = q^*(\beta)+s^*(\gamma)$.  Then $\gamma_1 = \gamma -p^*p_* (\gamma) \in \ker p_*$ and 
$\alpha = q^*(\beta +r^*p_*(\gamma))+s^*(\gamma_1)$.  
 If $q^*(\beta)+s^*(\gamma)= 0$ then, by Corollary \ref{coshortexactsequence}, $\beta \in \im r^*$ and  $\gamma \in \im p^*$.  
If, moreover, $\gamma \in \ker p_*$, then since 
 $\ker p_* \cap \im p^* =0$ we have $\gamma =0$.  
\end{proof}

\begin{thm}\label{makingGysinsquare}
Let $m = \dim M-\dim C$. For all $k > 0$  there is a unique homomorphism $\w q_* :H_{k-1} (E) \to H_{k-m} (C)$ such that $\w q_*\circ q^*$ is the identity and the following diagram is commutative and acyclic:
\begin{equation}\label{Gysinsquare}
\renewcommand{\arraystretch}{1.5}
\begin{array}[c]{ccc}
H_{k-1} (E) &\stackrel{\displaystyle s^*}\longleftarrow& H_k(\w M)\\ 
\hphantom{q}\downarrow{\w q_*}&&\hphantom{p}\downarrow{p_*} \\ 
H_{k-m} (C) &\stackrel{\displaystyle r^*}\longleftarrow& H_k( M) 
\end{array}
\end{equation}    
\end{thm}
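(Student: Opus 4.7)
The plan is to use Lemma \ref{directsum}(2) to define $\widetilde q_*$ as the projection onto $\im q^*$ followed by the inverse of $q^*$. For this to make sense I first verify that $q^*:H_{k-m}(C)\to H_{k-1}(E)$ is injective: if $q^*\beta=0$, then $(0,\beta)$ lies in the kernel of $s^*-q^*$ on $H_k(\widetilde M)\oplus H_{k-m}(C)$, which by the acyclicity of \eqref{pregysinsquare} equals the image of $(p^*,r^*)$, so $(0,\beta)=(p^*\gamma,r^*\gamma)$; since $p^*$ is injective (Lemma \ref{degree1}), $\gamma=0$ and hence $\beta=0$. Combined with Lemma \ref{directsum}(2), every $\alpha\in H_{k-1}(E)$ now has a unique decomposition $\alpha=q^*\beta+s^*\eta_1$ with $\eta_1\in\ker p_*$, and I define $\widetilde q_*\alpha=\beta$.

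The identity $\widetilde q_*\circ q^*=\id$ is immediate from the definition. For commutativity $\widetilde q_*\circ s^*=r^*\circ p_*$, given $\eta\in H_k(\widetilde M)$, decompose $\eta=\eta_1+p^*(p_*\eta)$ by Lemma \ref{directsum}(1); then $s^*\eta=s^*\eta_1+q^*r^*(p_*\eta)$ using commutativity of \eqref{pregysinsquare}, which reads off as $\widetilde q_*(s^*\eta)=r^*(p_*\eta)$ since $\eta_1\in\ker p_*$. Uniqueness follows because any $\widetilde q_*'$ satisfying both conditions is forced to agree with $\widetilde q_*$ on each summand of the direct sum: on $\im q^*$ by the retraction identity, and on $s^*(\ker p_*)$ by commutativity applied to $\eta\in\ker p_*$.

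For acyclicity of \eqref{Gysinsquare} I would verify exactness of
\[
0\to H_k(\widetilde M)\xrightarrow{(s^*,\,p_*)} H_{k-1}(E)\oplus H_k(M)\xrightarrow{\widetilde q_*+r^*} H_{k-m}(C)\to 0.
\]
Surjectivity of $\widetilde q_*+r^*$ follows from $\widetilde q_*\circ q^*=\id$. For injectivity of $(s^*,p_*)$: if $p_*\eta=0$ and $s^*\eta=0$, then $\eta\in\ker p_*$ by Lemma \ref{directsum}(1), and the argument in the proof of Lemma \ref{directsum}(2) shows $s^*$ is injective on $\ker p_*$, so $\eta=0$. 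Exactness in the middle: one inclusion is commutativity; for the other, if $\widetilde q_*\alpha=r^*\gamma$, write $\alpha=q^*r^*\gamma+s^*\eta_1$ with $\eta_1\in\ker p_*$ and set $\eta:=p^*\gamma+\eta_1$, which satisfies $p_*\eta=\gamma$ and $s^*\eta=q^*r^*\gamma+s^*\eta_1=\alpha$ using commutativity of \eqref{pregysinsquare}.

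The main obstacle is really the well-definedness of $\widetilde q_*$, which hinges on the injectivity of $q^*$; once that is in hand, the rest is a formal diagram chase using Lemma \ref{directsum} and the pregysin square. All the substantive topology has already been done upstream in Lemma \ref{degree1} and Proposition \ref{shortexactsequence}.
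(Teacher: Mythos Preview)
Your proof is correct and follows essentially the same approach as the paper's: define $\w q_*$ via the direct sum decomposition of Lemma~\ref{directsum}(2), then verify commutativity, uniqueness, and acyclicity by elementary computations. The paper's proof merely states that these verifications are ``straightforward computations using Lemma~\ref{directsum} and Corollary~\ref{coshortexactsequence}'', whereas you have spelled them out in full; your separate argument for injectivity of $q^*$ is already contained in Corollary~\ref{coshortexactsequence}, so you could cite it rather than reprove it.
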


\begin{proof}

By Lemma \ref{directsum}, $\alpha \in H_{k-1}(E)$ can be written uniquely $\alpha = q^*(\beta)+s^*(\gamma)$, where $\beta \in 
H_{k-m}(C)$ and $\gamma \in \ker p_*$. We require $\w q_*q^*(\beta) = \beta$, and $\w q_*s^*(\gamma) = r^*p_*(\gamma) =0$, so we must have $\w q_*(\alpha) = \beta$, and $\beta$ is unique since $q^*$ is injective. Straightforward computations using Lemma \ref{directsum} and Corollary \ref{coshortexactsequence}  give that 
\eqref{Gysinsquare} is commutative and the associated simple complex is exact.   
\end{proof}

In the blowup square \eqref{square}, the map $q: E\to C$ is the projectivization of the normal bundle of $C$ in $M$.
To give a geometric description of the homomorphism $\w q_*$ we apply the classical \emph{Leray-Hirsch Theorem}: 

\begin{thm}\label{lerayhirsch}
Let $A\to B$ be vector bundle of rank $m$, and let $\pi: \mathbb P(A)\to B$ be its projectivization. Let $e\in H^1(\mathbb P(A))$ be the Euler class of the tautological line bundle. The cohomology group $H^*(\mathbb P(A))$ is a free module over $H^*(B)$ with basis $1, e, e^2,\dots, e^{m-1}$. In other words, every element $u\in H^*(\mathbb P(A))$ can be written uniquely
$$
u = \pi^*(u_0) + \pi^*(u_1)\smile e + \cdots + \pi^*(u_{m-1})\smile e^{m-1},
$$
where $u_0,u_1,\dots,u_{k-1}\in H^*(B)$.
\end{thm}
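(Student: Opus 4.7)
The plan is to prove this as an instance of the classical Leray–Hirsch theorem for the fibration $\pi : \mathbb{P}(A) \to B$, with fiber $\mathbb{RP}^{m-1}$ and $\Z_2$ coefficients. Equivalently, one defines the $H^*(B)$-module homomorphism
$$
\Phi : \bigoplus_{i=0}^{m-1} H^{*-i}(B) \longrightarrow H^*(\mathbb{P}(A)), \qquad (u_0,\dots,u_{m-1}) \longmapsto \sum_{i=0}^{m-1} \pi^*(u_i) \smile e^i,
$$
and shows it is an isomorphism. I would carry this out in three steps.

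First I would verify the restriction to a fiber. For $b \in B$ the inclusion $\iota_b : \mathbb{P}(A_b) \hookrightarrow \mathbb{P}(A)$ pulls the tautological line bundle over $\mathbb{P}(A)$ back to the tautological line bundle over $\mathbb{P}(A_b) \cong \mathbb{RP}^{m-1}$; hence $\iota_b^*(e)$ is the mod $2$ Euler class (equivalently $w_1$) of the tautological line bundle over $\mathbb{RP}^{m-1}$. This class generates $H^*(\mathbb{RP}^{m-1};\Z_2) \cong \Z_2[t]/(t^m)$, so $\iota_b^*(1), \iota_b^*(e), \dots, \iota_b^*(e^{m-1})$ form a $\Z_2$-basis of the fiber cohomology. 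With $\Z_2$ coefficients the monodromy acts trivially (any graded algebra automorphism of $\Z_2[t]/(t^m)$ fixes $t$), so the Serre local system is constant.

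Second, I would deduce the theorem by either of two equivalent routes. The spectral-sequence route: the Serre spectral sequence of $\pi$ has $E_2^{p,q} = H^p(B) \otimes H^q(\mathbb{RP}^{m-1})$; since each $e^i$ is a globally defined class of $\mathbb{P}(A)$ whose fiber restriction hits the generator in degree $i$, the classes $e^i$ are permanent cycles, and by the Leray–Hirsch criterion the spectral sequence collapses at $E_2$ with $\Phi$ realizing the multiplicative filtration. The inductive route: choose a finite cover $\{U_1,\dots,U_N\}$ of $B$ trivializing $A$; in the trivial case $\mathbb{P}(A|_{U_i}) = U_i \times \mathbb{RP}^{m-1}$, and the Künneth theorem over $\Z_2$ gives the conclusion directly. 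For a union, apply the five-lemma to the map of Mayer–Vietoris sequences
$$
\begin{array}{c}
\bigoplus_i H^{*-i}(U \cup V) \to \bigoplus_i H^{*-i}(U) \oplus \bigoplus_i H^{*-i}(V) \to \bigoplus_i H^{*-i}(U \cap V) \to \\[2pt]
\downarrow \Phi_{U\cup V} \qquad \downarrow \Phi_U \oplus \Phi_V \qquad \downarrow \Phi_{U\cap V}
\end{array}
$$
and the analogous sequence for $\mathbb{P}(A)$; the vertical maps on the right are isomorphisms by induction.

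The main technical point is the fiber identification of the tautological line bundle and the resulting identification of $\iota_b^*(e)$ with the generator of $H^*(\mathbb{RP}^{m-1};\Z_2)$; once this is in hand, the rest is either the formal Leray–Hirsch mechanism or a bookkeeping argument using Mayer–Vietoris and the five lemma. The remaining delicacy in the inductive version is to check naturality of $\Phi$ under restriction, which is immediate because both $\pi^*$ and cup product with $e$ commute with restriction to open subsets.
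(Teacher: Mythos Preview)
Your proposal is correct and, in its spectral-sequence route, follows essentially the same approach as the paper: the paper's entire proof is the single sentence ``The proof uses the Leray-Serre spectral sequence,'' with a citation to McCleary. Your outline supplies the details behind that citation (fiber restriction of $e$, triviality of the coefficient system over $\Z_2$, and collapse at $E_2$), and your alternative Mayer--Vietoris argument is a standard equivalent route.
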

\begin{proof}
The proof uses the Leray-Serre spectral sequence 
\cite[Theorem 5.10, p.~48]{mccleary}.
\end{proof}

If the base $B$ of the vector bundle is a topological manifold of dimension $b$, then $\mathbb P(A)$ is a manifold of dimension $b+m-1$, and by Poincar\' e duality the Leray-Hirsch Theorem gives that every element $\alpha\in H_*(\mathbb P(A))$ can be written uniquely
\begin{equation}\label{lh}
\alpha = \pi^*(a_0) + e\frown \pi^*(a_1) + \cdots + e^{m-1}\frown \pi^*(a_{m-1}),
\end{equation}
where $a_0, a_1,\dots,a_{m-1}\in H_*(B)$.

\begin{lem}\label{projection}
If $\pi:\mathbb P(A)\to B$ is the projectivization of an $m$-plane bundle and $\alpha\in H_*(\mathbb P(A))$ is given by \eqref{lh}, then $\pi_*(\alpha) = a_{m-1}$.
\end{lem}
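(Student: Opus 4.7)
The plan is to translate \eqref{lh} into cohomology via Poincaré duality and then apply the cohomology projection formula together with a standard computation of the fiber integral of powers of $e$.

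First, write each $a_i \in H_*(B)$ as $a_i = u_i \frown [B]$ for $u_i \in H^*(B)$. By the definition of the Gysin map $\pi^*$ in terms of Poincaré duality and the naturality of the cap product, $\pi^*(a_i) = \pi^*(u_i) \frown [\mathbb P(A)]$, where $\pi^*(u_i)$ denotes the cohomology pullback. Using the identity $e^i \frown (w \frown [\mathbb P(A)]) = (e^i \smile w) \frown [\mathbb P(A)]$, the expansion \eqref{lh} becomes
$$
\alpha \;=\; \Bigl(\sum_{i=0}^{m-1} e^i \smile \pi^*(u_i)\Bigr) \frown [\mathbb P(A)].
$$

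Next, apply the defining relation of the cohomology integration-along-the-fiber map $\pi_!:H^j(\mathbb P(A)) \to H^{j-m+1}(B)$, namely $\pi_*(v \frown [\mathbb P(A)]) = \pi_!(v) \frown [B]$, together with the cohomology projection formula $\pi_!(\pi^*(u) \smile z) = u \smile \pi_!(z)$. This reduces the problem to computing $\pi_!(e^i) \in H^{i-m+1}(B)$ for $0 \le i \le m-1$. For $i < m-1$ the target group vanishes for degree reasons, so $\pi_!(e^i) = 0$. For $i = m-1$, restrict $e$ to a fiber $\mathbb P^{m-1} \subset \mathbb P(A)$: it becomes the Euler class of the tautological line bundle over $\mathbb P^{m-1}$, which generates $H^1(\mathbb P^{m-1};\Z_2)$, so $e^{m-1}$ restricts to the fundamental cohomology class of the fiber, giving $\pi_!(e^{m-1}) = 1 \in H^0(B)$.

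Combining these, only the top term survives and
$$
\pi_*(\alpha) \;=\; \bigl(u_{m-1} \smile \pi_!(e^{m-1})\bigr) \frown [B] \;=\; u_{m-1} \frown [B] \;=\; a_{m-1}.
$$
The main technical point is the validity of the two projection formulas above in the semialgebraic setting, but they are classical consequences of the naturality of the cap product and Poincaré duality for the compact smooth manifolds $\mathbb P(A)$ and $B$, so no serious obstacle arises.
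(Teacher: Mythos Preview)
Your proof is correct and follows essentially the same route as the paper's: translate to cohomology via Poincar\'e duality, apply the projection formula, and reduce to the fiberwise computation that $e^{m-1}$ restricts to the generator of $H^{m-1}(\mathbb P^{m-1};\Z_2)$. The only difference is packaging: you invoke the cohomology transfer $\pi_!$ and its projection formula as black boxes, whereas the paper stays in homology, writes $\varepsilon^i = e^i\frown[\mathbb P(A)]$, and verifies $\pi_*(\varepsilon^{m-1})=[B]$ by an explicit localization argument at points of $B$.
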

\begin{proof}
Suppose that $\alpha = e^i\frown \pi^*(a_i)$, $i\in \{0,\dots,m-1\}$, and $a_i = u_i\frown [B]$. Let $\varepsilon^i = e^i\frown [\mathbb P(A)] $.
Then $$
\alpha= e^i\frown(\pi^*(u_i)\frown [\mathbb P(A)])= \pi^*(u_i)\frown (e^i\frown [\mathbb P(A)]) = \pi^*(u_i)\frown \varepsilon^i
$$
and so 
$$
\pi_* (\alpha) = \pi_*\left (\pi^*(u_i)\frown \varepsilon^i\right )= u_i\frown \pi_*(\varepsilon^i).
$$
Now $\varepsilon^i\in H_{b+m-1-i}(\mathbb P(A))$, and if $i<m-1$ then $b+m-1-i> b$, so $\pi_*(\varepsilon^i) = 0$. 

On the other hand, we claim that $\pi_*(\varepsilon^{m-1}) = [B]$, and so if $\alpha = e^{m-1}\frown \pi^*(a_{m-1})$ then $\pi_*(\alpha) = u_{m-1}\frown [B]=a_{m-1}$. 
Now $\pi_*(\varepsilon^{m-1})\in H_b(B)$, and we have $\pi_*(\varepsilon^{m-1}) = [B]$ if and only if $\rho_x(\pi_*(\varepsilon^{m-1}))\neq 0$ for all $x\in B$, where $\rho_x: H_b(B)\to H_b(B,B\setminus \{x\})$ is the restriction map. There is a commutative square
\begin{equation*}
\renewcommand{\arraystretch}{1.5}
\begin{array}[c]{ccc}
H_b(\mathbb P(A)) &\stackrel{\displaystyle \sigma_x}\longrightarrow&H_b(\mathbb P(A), \mathbb P(A)\setminus \pi\inv(x))\\ 
\hphantom{\pi_*}\downarrow{\pi_*}&&\hphantom{\pi_*}\downarrow{\pi_*} \\ 
H_b(B)&\stackrel{\displaystyle \rho_x}\longrightarrow&H_b(B,B\setminus \{x\})
\end{array}
\end{equation*}
with $H_b(\mathbb P(A), \mathbb P(A)\setminus \pi\inv(x))= H_b(B,B\setminus \{x\})\otimes H_0(\pi\inv(x))$ and $\pi_*(\alpha\otimes \beta) = \phi(\beta)\alpha$,  where $\phi:H_0(\pi\inv(x))\to \Z_2$ is the augmentation isomorphism. By the local triviality of the bundle $\pi: \mathbb P(A)\to B$, we have 
\begin{equation*}
\begin{aligned}
\sigma_x(\varepsilon^{m-1}) &= \sigma_x(e^{m-1}\frown [\mathbb P(A)])\\
&= \rho_x[B]\otimes \left ((e^{m-1}|\pi\inv(x))\frown [\pi\inv(x)]\right )\\ &= \rho_x[B]\otimes \left ((e|\pi\inv(x))^{m-1}\frown [\pi\inv(x)]\right).
\end{aligned}
\end{equation*}
Now $\pi\inv(x) = \mathbb P^{m-1}$, and $e|\pi\inv(x)$ is the Euler class of the tautological line bundle, so $(e|\pi\inv(x))^{m-1}\neq 0$, and hence $\rho_x(\pi_*(\varepsilon^{m-1})) \neq 0$.
\end{proof}

Now we show that the homomorphism $\w q_* :H_{k-1} (E) \to H_{k-m} (C)$ of \eqref{Gysinsquare} can be defined geometrically in terms of the excess bundle, which is defined as follows.  
Let $\mathcal N_C$ be the normal bundle of $C$ in $M$ and denote by $e(C)\in H^m(C)$ its Euler class (\emph{i.e.}\ the top 
Stiefel-Whitney class).  Similarly we denote by $\mathcal N_E$  the normal bundle of $E$ in $\w M$ and 
 by $e(E)\in H^1(E)$ its Euler class. Then $q:E\to C$ is the projectivization of $\mathcal N_C$, and $\mathcal N_E$ is the tautological line bundle. The \emph{excess bundle} is the quotient bundle $\mathcal E = q^* \mathcal N_C /\mathcal N_E$.  The Euler class $e(\mathcal E)$ satisfies $q^*e(C) = e(\mathcal E) e(E)$. 
 
 \begin{prop}
Let $\alpha \in H_{k-1}(E)$.  Then $\w q_*(\alpha) = q_* (e(\mathcal E) \frown \alpha)$.  
 \end{prop}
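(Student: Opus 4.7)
The plan is to use the Leray--Hirsch decomposition of Theorem \ref{lerayhirsch} (in its homological form \eqref{lh}) to compute both sides directly and compare. First I would write an arbitrary $\alpha\in H_{k-1}(E)$ uniquely as
\[
 \alpha=\sum_{i=0}^{m-1} e(E)^i \frown q^*(a_i),\qquad a_i\in H_{k-m+i}(C),
\]
and then show separately that $\w q_*(\alpha)=a_0$ and that $q_*(e(\mathcal E)\frown\alpha)=a_0$.

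To see that $\w q_*(\alpha)=a_0$, recall from the proof of Theorem \ref{makingGysinsquare} that $\w q_*$ is characterized by the decomposition $\alpha=q^*(\beta)+s^*(\gamma)$ with $\gamma\in\ker p_*$ and $\w q_*(\alpha)=\beta$. The point is that each higher term $e(E)^i\frown q^*(a_i)$ with $i\geq 1$ lies in $s^*(\ker p_*)$. Using the standard normal-bundle identity $s^*\!\circ s_*=e(E)\frown -$ on $H_*(E)$, I can write
\[
 e(E)^i\frown q^*(a_i)=s^*\bigl(s_*(e(E)^{i-1}\frown q^*(a_i))\bigr),
\]
and Lemma \ref{projection} (applied to $e(E)^{i-1}\frown q^*(a_i)$ with $i-1<m-1$) combined with $p_*\!\circ s_*=r_*\!\circ q_*$ shows that $s_*(e(E)^{i-1}\frown q^*(a_i))\in\ker p_*$. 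Summing over $i\geq1$ gives $\alpha-q^*(a_0)\in s^*(\ker p_*)$, hence $\w q_*(\alpha)=a_0$.

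To compute $q_*(e(\mathcal E)\frown\alpha)$, the key algebraic input is the short exact sequence of bundles $0\to\mathcal N_E\to q^*\mathcal N_C\to\mathcal E\to 0$ on $E$. Whitney multiplicativity of total Stiefel--Whitney classes, together with $w(\mathcal N_E)=1+e(E)$, yields the relation $e(\mathcal E)\cdot e(E)=q^*(e(C))$ and more explicitly the Leray--Hirsch expansion
\[
 e(\mathcal E)=e(E)^{m-1}+\sum_{j=0}^{m-2} q^*(w_{m-1-j}(\mathcal N_C))\smile e(E)^j.
\]
For $i\geq1$, substituting $e(\mathcal E)\smile e(E)^i=e(E)^{i-1}\smile q^*(e(C))$ gives $e(\mathcal E)\frown e(E)^i\frown q^*(a_i)=e(E)^{i-1}\frown q^*(e(C)\frown a_i)$, whose pushforward under $q_*$ vanishes by Lemma \ref{projection} since $i-1<m-1$. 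For $i=0$, substituting the above expansion of $e(\mathcal E)$ into $e(\mathcal E)\frown q^*(a_0)$ produces a sum of terms $e(E)^j\frown q^*(w_{m-1-j}(\mathcal N_C)\frown a_0)$, of which Lemma \ref{projection} retains only the $j=m-1$ term, yielding $a_0$. Combining these two computations proves $\w q_*(\alpha)=a_0=q_*(e(\mathcal E)\frown\alpha)$.

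The main obstacle is not conceptual but rather bookkeeping: the precise identification of the leading coefficient of $e(\mathcal E)$ in the Leray--Hirsch basis (namely $1$ on $e(E)^{m-1}$) is what ensures the extraction of $a_0$, and the degree constraint $i-1<m-1$ for $i\leq m-1$ is what kills all the auxiliary terms under $q_*$. One should also take mild care with the edge case $m=1$, where $q$ is an isomorphism, $\mathcal E=0$, and the identity $\w q_*=q_*$ holds tautologically.
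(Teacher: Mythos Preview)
Your proof is correct and rests on the same two identities as the paper's---the Whitney expansion of $e(\mathcal E)$ and the relation $e(\mathcal E)\smile e(E)=q^*(e(C))$---but the organization differs. The paper decomposes $\alpha$ via Lemma \ref{directsum} as $q^*(\beta)+s^*(\gamma)$ with $\gamma\in\ker p_*$ and treats the two summands separately: for $q^*(\beta)$ it performs exactly your $i=0$ computation, while for $s^*(\gamma)$ it invokes Proposition \ref{shortexactsequence} to lift $\gamma$ to $\w\alpha\in H_k(E)$ with $q_*(\w\alpha)=0$, writes $\alpha=e(E)\frown\w\alpha$, and then uses $q_*(q^*(e(C))\frown\w\alpha)=e(C)\frown q_*(\w\alpha)=0$. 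You instead work in Leray--Hirsch coordinates throughout: you first identify $\w q_*(\alpha)$ with the leading coefficient $a_0$ by checking that each higher term $e(E)^i\frown q^*(a_i)$ lies in $s^*(\ker p_*)$ (an identification the paper does not make explicit), and for the vanishing of the higher terms on the right-hand side you use $e(\mathcal E)\smile e(E)^i=e(E)^{i-1}\smile q^*(e(C))$ together with Lemma \ref{projection}, bypassing Proposition \ref{shortexactsequence}. Your route is a bit more computational and self-contained; the paper's is slightly more conceptual in its handling of the $s^*(\ker p_*)$ part.
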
 
 
\begin{proof}
Since $\alpha = q^*(\beta)+s^*(\gamma)$, where $\beta \in 
H_{k-m}(C)$ and $\gamma \in \ker p_*$,  it suffices to consider two cases, $\alpha = q^*(\beta)$ or $\alpha = s^*(\gamma)$ with $\gamma \in \ker p_*$.  

If $\alpha = q^*(\beta)$ then we have to show that  $q_* (e(\mathcal E) \frown q^*(\beta))= \beta$.  
The Whitney formula for the total Stiefel-Whitney class $w(q^* \mathcal N_C) = w( \mathcal N_E) w(\mathcal E) = (1 + e(E))w(\mathcal E)$
yields 
$$
e(\mathcal E) = w_{m-1} (\mathcal E) = \sum_{i=0} ^{m-1}  e(E) ^{m-1-i}\smile q^*( w_i ( \mathcal N_C)).
$$
Therefore by  Lemma \ref{projection},
\begin{align*}
q_* (e(\mathcal E) \frown q^*(\beta))
&=  q_*\left (\sum_{i=0} ^{m-1} ( e(E) ^{m-1-i} \smile q^*( w_i ( \mathcal N_C)))\frown q^*(\beta))\right ) \\ 
&=  \sum_{i=0} ^{m-1} q_*\left ( e(E) ^{m-1-i} \frown (q^*( w_i ( \mathcal N_C))\frown q^*(\beta))\right ) \\ 
 &= q_*\left (  e(E) ^{m-1}\frown( q^*( w_0 ( \mathcal N_C)\frown q^*(\beta))\right )\\
 & = \beta.  
\end{align*}

If $\alpha = s^*(\gamma)$ with $\gamma \in \ker p_*$,  then we have to show that  
$q_* (e(\mathcal E) \frown s^*(\gamma))= 0$.  By Proposition \ref{shortexactsequence}  there is $\w \alpha \in H_{k} (E)$
 such that $\gamma =  s_*(\w \alpha )$ and 
$q_*(\w \alpha ) =0$.  Therefore $\alpha =  s^*(\gamma) =  s^*(s_*(\w \alpha )) = e(E) \frown \w \alpha$, and so we have
\begin{align*}
q_* (e(\mathcal E) \frown s^*(\gamma)) &= q_* ((e(\mathcal E) \smile e(E) ) \frown \w \alpha)\\  &=  q_* (q^* (e(C) ) \frown \w \alpha)\\ &= e(C) \frown q_* (\w \alpha)\\ &= 0.  
\end{align*}
This completes the proof of the Proposition.
\end{proof} 


\section{Blowup with center transverse to the divisor at infinity}\label{Blowuptransversesection}

In section \ref{weightsingularsection} we will apply the main theorem of Guill\' en and Navarro Aznar \cite{navarro} to extend our weight filtration to singular varieties. Their key  extension criterion describes the behavior of the weight filtration for the blowup of a good compactification with center transverse to the divisor at infinity. In the present section we verify this extension criterion. The key result is the acyclicity of the Gysin diagram \eqref{gysinsquare} of a blowup square.

Let $X$ be a smooth $n$-dimensional variety and let $W =\Xb$ be a good compactification of $X$, with $W\setminus X = D$ a divisor with normal crossings, so that $D =\bigcup_{i\in I} D_i$, where $D_i$ are smooth hypersurfaces meeting transversely. Let $Z$ be an irreducible smooth $m$-dimensional subvariety of $X$ and let $Y= \Zb$ be the closure of $Z$ in $W$. Suppose that $Y$ is a smooth subvariety of $W$ such that $Y$ has normal crossings with $D$ \cite[(2.3.1)]{navarro} and $Y\not\subset D$. Then for every $x\in W$ there is a good local coordinate system $(U,(u_1,\dots,u_n))$ about $x$, and for each $i\in J(U)$ there is an index $k(i)\in \{1,\dots,n\}$, $k(i)\leq m$, such that $D_i\cap U$ is the coordinate hyperplane $u_{k(i)}=0$, and $Y\cap U$ is given by $u_{m+1} = \cdots = u_n = 0$. Thus $Y$ is transverse to the divisor $D$ \cite[III.3]{akbulutking}.

From this data we obtain the blowup square of pairs $(W_\bullet, X_\bullet)$:
\begin{equation}
\label{square1}
\renewcommand{\arraystretch}{1.5}
\begin{array}[c]{ccc}
(\w Y,\w Z)&\longrightarrow&(\w W,\w X)\\ 
\downarrow&&\hphantom{b}\downarrow{ b} \\ 
(Y, Z)&\stackrel {\displaystyle a}\longrightarrow&(W,X)
\end{array}
\end{equation}
Here $a$ is the inclusion, $b$ is the blowup of $(W,X)$ along $(Y,Z)$, and $(\w Y,\w Z)= b\inv(Y,Z)$. Since $Y$ has normal crossings with $D$, it follows that $\w W$,  $Y$, and $\w Y$ are good compactifications of $\w X$, $Z$, and $\w Z$, respectively. 

\begin{thm}\label{blowup1} \emph{Blowup with center transverse to the divisor at infinity.}
Given a blowup square of pairs \eqref{square1}, the corresponding square of corner compactifications induces an acyclic diagram of weight complexes
\begin{equation*}
\renewcommand{\arraystretch}{1.5}
\begin{array}[c]{ccc}
\mathcal WC_*(\w Z')&\longrightarrow&\mathcal WC_*(\w X')\\ 
\downarrow&&\hphantom{b'_*}\downarrow{b'_*} \\ 
\mathcal WC_*(Z')&\stackrel{\displaystyle a_*'}\longrightarrow&\mathcal WC_*(X')
\end{array}
\end{equation*}
In other words, the simple filtered complex of this diagram  is quasi-isomorphic to the zero complex.
\end{thm}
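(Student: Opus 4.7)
The plan is to reduce the claimed acyclicity of weight complexes to an acyclicity statement for the diagram of Gysin complexes, and then verify this stratum by stratum using Proposition~\ref{shortexactsequence}. Because the weight complex is by definition the Deligne shift of the corner complex, a filtered quasi-isomorphism of corner complexes passes to the weight complexes; so it suffices to show that the simple complex of the square of corner complexes is filtered quasi-isomorphic to zero. By Corollary~\ref{E1Gysin}, the $\widehat E^1$-page of the corner spectral sequence at each vertex is the Gysin complex of the divisor at infinity, so the simple complex of the square of corner complexes carries a spectral sequence whose $E_1$-page is the simple complex of the induced diagram of Gysin complexes. Its vanishing will imply the theorem.

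First I would pin down the divisor data and the four induced morphisms on Gysin complexes. Because $Y$ has normal crossings with $D$ and $Y\not\subset D$, the multiplicity of $E$ in $b^*D_i$ is zero for every $i\in I$, and the exceptional fiber over a point of $D_i\cap Y$ lies inside $\w D_i$ (a dimension count shows that the two $\mathbb P^{n-m-1}$-bundles $E\cap \w D_i$ and $E|_{D_i\cap Y}$ over $D_i\cap Y$ coincide); hence set-theoretically $b\inv D_i = \w D_i$. Consequently the divisor at infinity of $\w X$ in $\w W$ is exactly $\bigcup_{i\in I}\w D_i$ --- the exceptional divisor $E$ is \emph{not} a component --- so all four good compactifications in \eqref{square1} have their divisors indexed by the same set $I$, with strata $D_J$, $Y\cap D_J$, $\w D_J$, $\w D_J\cap E$ for $J\subset I$. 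For each of the four regular maps $a$, $b$, $\iota:\w Y\to \w W$, $\w b:\w Y\to Y$, transversality yields exponent matrix $(a_{ij})$ in \eqref{quasimonomialdef} equal to the identity, so Corollary~\ref{pushforward} identifies each induced morphism of Gysin complexes with the direct sum of homology pushforwards on strata.

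Next, for each $J\subset I$ the restriction of $b$ to $\w D_J$ is the blowup of the smooth variety $D_J$ along the smooth subvariety $Y\cap D_J$, with exceptional divisor $\w D_J\cap E$; this is a blowup square in the sense of \eqref{square}. Applying Proposition~\ref{shortexactsequence} to this stratum gives, for every $k$, a short exact sequence making the square
\begin{equation*}
\renewcommand{\arraystretch}{1.3}
\begin{array}{ccc}
H_k(\w D_J\cap E) & \longrightarrow & H_k(\w D_J)\\
\downarrow & & \downarrow\\
H_k(Y\cap D_J) & \longrightarrow & H_k(D_J)
\end{array}
\end{equation*}
acyclic as a length-three simple complex. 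By the previous paragraph, this square is precisely the $(J,k)$-summand of the simple Gysin complex of the blowup square of pairs.

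Finally I would assemble the pieces. Filter the simple Gysin complex of the square by $F^p = \bigoplus_{|J|\geq p}(\cdots)$: the Gysin differentials raise $|J|$ by one and the horizontal pushforwards preserve $|J|$, so $F^p$ is a subcomplex, and the associated graded $F^p/F^{p+1}$ is exactly the direct sum over $|J|=p$ of the stratum squares with only the pushforward differential surviving (the Gysin contribution vanishes in the associated graded). Stratum-wise acyclicity then forces $E_1=0$ for this filtration, the spectral sequence degenerates, and the simple Gysin complex is acyclic --- completing the reduction. The main obstacle I anticipate is the first step: confirming that $\w W$ is a good compactification of $\w X$ with divisors indexed only by $I$ (so that $E$ contributes no extra boundary component), and then checking that each of the four induced maps on Gysin complexes is a clean stratum-wise pushforward --- the exponent matrices in \eqref{quasimonomialdef} must be computed individually, in particular for $\w b:\w Y\to Y$, where transversality must be invoked to rule out excess multiplicity in the pullback of the defining sections.
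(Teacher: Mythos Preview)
Your proposal is correct. The reduction to the Gysin diagram and the verification that all four morphisms are homology pushforwards (via Corollary~\ref{pushforward}) match the paper's argument exactly. Where you diverge is in the final acyclicity step: the paper introduces the \emph{complexity} $c(D)$ (the chromatic number of the intersection graph of the components of $D$) and runs an induction on $c(D)$, peeling off one smooth component $V=D_0$ at a time via the short exact sequence
\[
0\to G(V,V\setminus D')[1]\to G(W,W\setminus D)\to G(W,W\setminus D'')\to 0,
\]
with base case $c(D)=0$ handled by Proposition~\ref{shortexactsequence}. You instead filter the simple Gysin complex by $F^p=\bigoplus_{|J|\ge p}$ and observe that the associated graded splits as a direct sum of stratum blowup squares, each acyclic by Proposition~\ref{shortexactsequence}. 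Your route is more direct: it avoids the auxiliary complexity invariant and the induction bookkeeping, and exploits from the outset the fact that all four corners share the same indexing set $I$. The paper's approach, on the other hand, packages the comparison as a short exact sequence of Gysin complexes, which is reusable (indeed the same device drives the proof of Theorem~\ref{blowup2}). Both ultimately rest on the same stratum-level input.
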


For the definition of the simple filtered complex of a diagram of filtered complexes, see \cite[p.~125]{weight}.

Recall that the weight filtration $\mathcal W_*$ \eqref{weightfiltration} is the Deligne shift of the filtration $\widehat F_*$, where $\widehat F_{-p} = F^p$, and $F^*$ is the corner filtration \eqref{cornercomplex} \eqref{cornerss} \eqref{weightfiltration}. Thus to prove the theorem it suffices to show that the spectral sequence of the simple filtered complex associated to the diagram
\begin{equation}
\label{cornersquare}
\renewcommand{\arraystretch}{1.5} 
\begin{array}[c]{ccc}
(C_*(\w Z'),F^*)&\longrightarrow& (C_*(\w X'), F^*)\\ 
\downarrow&&\hphantom{b'_*}\downarrow{b'_*} \\ 
(C_*(Z'), F^*)&\stackrel{\displaystyle a_*'}\longrightarrow&(C_*(X'),F^*)
\end{array}
\end{equation}
has trivial $E^2$ term. This in turn is equivalent to the statement that the simple complex $\simple G(W_\bullet, X_\bullet)$ associated to the diagram of  Gysin complexes
\begin{equation}
\label{gysinsquare}
\renewcommand{\arraystretch}{1.5}
\begin{array}[c]{ccc}
G(\w Y,\w Z)&\longrightarrow&G(\w W,\w X)\\ 
\downarrow&&\hphantom{b_*}\downarrow{b_*} \\ 
G(Y, Z)&\stackrel{\displaystyle a_*}\longrightarrow&G(W,X)
\end{array}
\end{equation}
is acyclic. By Corollary \ref{pushforward} the arrows in \eqref {gysinsquare} are homology pushforward. We will prove that the complex $\simple G(W_\bullet, X_\bullet)$ is acyclic by induction on the \emph{complexity} of the divisor $D = W\setminus X$, which is defined as follows.

Let $D$ be a divisor of the compact nonsingular variety $W$, and suppose that $D$ has simple normal crossings \eqref{ncd}. A \emph{nonsingular decomposition} of $D$ is a set $\mathcal D = \{D_i\}_{i\in I}$ of nonsingular divisors of $W$ such that $D = \bigcup_{i\in I} D_i$. The \emph{complexity} $c(D)$ of the divisor $D$ is the minimum cardinality of a nonsingular decomposition of $D$.

If the divisor $D$ has simple normal crossings in $W$, there is a one-to-one correspondence between nonsingular decompositions of $D$ and partitions of the set $\mathcal C(D)$ of irreducible components of $D$ such that if $C_i$ and $C_j$ belong to the same member of the partition then $C_i\cap C_j = \emptyset$. The nonsingular decomposition $\mathcal D = \{D_i\}_{i\in I}$ corresponds to the partition $\{\mathcal D_i\}_{i\in I}$ of $\mathcal C(D)$, where $\mathcal D_i = \{C_j\ |\ C_j\subset D_i\}$.

\begin{rem}
If $D$ is a simple normal crossing divisor of $W$, let $\Gamma(D)$ be the corresponding graph. The vertices of $\Gamma(D)$ are the irreducible components of $D$, and there is an edge of $\Gamma(D)$ between $C_i$ and $C_j$ if and only if $C_i\cap C_j\neq \emptyset$.
Thus nonsingular decompositions of $D$ are in one-to-one correspondence with \emph{graph partitions} of $\Gamma(D)$, and the complexity $c(D)$ is the \emph{chromatic number} of $\Gamma(D)$.
\end{rem}

Now the inductive proof of Theorem \ref{blowup1} proceeds as follows. In the base case $c(D)= 0$ the divisor $D$ is empty, and the diagram \eqref{gysinsquare} reduces to
\begin{equation*}
\renewcommand{\arraystretch}{1.5}
\begin{array}[c]{ccc}
H_*(\w Y)&\longrightarrow&H_*(\w W)\\ 
\downarrow&&\downarrow \\ 
H_*(Y)&\longrightarrow&H_*(W)
\end{array}
\end{equation*}
which is acyclic by Proposition \ref{shortexactsequence}. 

Now suppose that $c(D) > 0$. Let $\mathcal D = \{D_i\}_{i\in I}$ be a nonsingular decomposition of $D$ with $|\mathcal D| = c(D)$.  Let $D = D''\cup V$ and $D' = D''\cap V$, where $V= D_0\in \mathcal D$. The cubical diagram $(D_J)_{J\subset I} \to W$ ($J\neq\emptyset$) is equal to the diagram
\begin{equation*}
\renewcommand{\arraystretch}{1.5}
\begin{array}[c]{ccc}
(D'_J)_{0\notin J}&\longrightarrow&V\\ 
\downarrow&&\downarrow \\ 
(D''_J)_{0\notin J}&\longrightarrow&W
\end{array}
\end{equation*}
where the vertical maps are inclusions. It follows from the definition of the homological Gysin complex that this diagram yields a short exact sequence of chain complexes,
$$
0\to G(V,V\setminus D')[1]\to G(W,W\setminus D)\to G(W,W\setminus D'')\to 0.
$$
Blowing up along $Y$ transverse to $D$ we obtain a short exact sequence of chain complexes
$$
0\to \simple G(V_\bullet,(V\setminus D')_\bullet)[1]\to \simple G(W_\bullet,(W\setminus D)_\bullet)\to \simple G(W_\bullet,(W\setminus D'')_\bullet)\to 0.
$$
Now $\mathcal D''= \mathcal D\setminus \{V\}$ is a nonsingular decomposition of $D''$ with $|\mathcal D''|= |\mathcal D| - 1 = c(D) - 1$, so $c(D'') < c(D)$. Also $\mathcal D'=\{D_i\cap V\ |\ i\neq 0\}$ is a nonsingular decomposition of $D'$ with $|\mathcal D'|= |\mathcal D| - 1$, so $c(D') < c(D)$. Thus by induction on $c(D)$ the complexes $\simple G(V_\bullet,(V\setminus D')_\bullet)$  and $\simple G(W_\bullet,(W\setminus D'')_\bullet)$
are acyclic. It follows that 
$\simple G(W_\bullet,(W\setminus D)_\bullet)$ is acyclic, as desired. This completes the proof of Theorem \ref{blowup1}.


\section{Blowup with center contained in the divisor at infinity}\label{Blowupcontainedsection}

To see that the weight filtration of a smooth variety $X$ does not depend on the choice of a good compactification $\Xb$ we show that the weight filtration is invariant, up to quasi-isomorphism, under a blowup of $\Xb$ with center contained in the divisor $D$ at infinity. This follows from the fact that the corresponding homomorphism of Gysin complexes induces an isomorphism in homology.

Again let $W =\Xb$ be a good compactification of the smooth variety $X$, and let $D= W\setminus X$. Let $Y$ be an irreducible smooth $m$-dimensional subvariety of $W$ such that $Y\subset D$, and suppose that $Y$ has normal crossings with $D$. Thus for every $x\in W$ there is a good coordinate system $(U, (u_1,\dots, u_n))$ about $x$, with $Y\cap U$  given by $u_{m+1} = \cdots = u_n = 0$, such that for each $i\in J(U)$ there is an index $k(i)\in \{1,\dots,n\}$ with $D_i\cap U$  the coordinate hyperplane $u_{k(i)}=0$,  and there exists $i\in I$ such that $k(i)> m$. Thus $Y$  intersects the divisor $D$ \emph{cleanly} \cite[III.3]{akbulutking}.

From this data we obtain the square
\begin{equation}
\label{square2}
\renewcommand{\arraystretch}{1.5}
\begin{array}[c]{ccc}
(\w Y,\emptyset)&\longrightarrow&(\w W,\w X)\\ 
\downarrow&&\hphantom{b}\downarrow{b} \\ 
(Y, \emptyset)&\stackrel{\displaystyle a}\longrightarrow&(W,X)
\end{array}
\end{equation}
where $a$ is the inclusion, $b$ is the blowup of $W$ along $Y$ (so $b$ maps $\w X$ isomorphically onto $X$), and $\w Y= b\inv(Y)$. Since $Y$ has normal crossings with $D$, it follows that $\w W$ is a good compactification of $\w X$. 

\begin{thm}\label{blowup2} \emph{Blowup with center contained in the divisor at infinity, clean intersection.} 
Given a blowup square of pairs \eqref{square2}, the homomorphism 
$$
b_*': \mathcal WC_*(\w X') \to \mathcal WC_*(X')
$$
is a quasi-isomorphism of filtered complexes.
\end{thm}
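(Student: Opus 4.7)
The plan is to reduce the theorem to a quasi-isomorphism of Gysin complexes and then to verify the latter via Leray-Hirsch. By Corollary~\ref{E1Gysin} the $\widehat E^1$ page of the corner spectral sequence of a good compactification is canonically identified with its Gysin complex, and Theorem~\ref{functoriality} ensures that $b':\w X'\to X'$ preserves the corner filtration. Since the weight filtration is the Deligne shift of the corner filtration, a standard convergence argument (as announced in the introduction) shows that $b'_*$ is a filtered quasi-isomorphism of weight complexes if and only if $b_*$ induces an isomorphism on $\widehat E^2$ of the corner spectral sequence, equivalently $b_*:G(\w W,\w X)\to G(W,X)$ is a quasi-isomorphism of Gysin complexes. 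It therefore suffices to prove the latter.

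To describe $b_*$ explicitly, set $\w I=I\cup\{0\}$ with $0$ indexing the exceptional divisor $\w Y=b\inv(Y)$, let $\w D_i\subset\w W$ denote the strict transform of $D_i$, and set $I_Y=\{i\in I:Y\subset D_i\}$, which is nonempty since $Y\subset D$. In good local coordinates with $Y=\{u_{m+1}=\cdots=u_n=0\}$, each $D_i=\{u_{k(i)}=0\}$, and $k(i)>m$ precisely for $i\in I_Y$, a chart-by-chart computation gives $b^*D_i=\w D_i+\w Y$ for $i\in I_Y$ and $b^*D_i=\w D_i$ otherwise. Hence every exponent $a_{ij}$ of \eqref{quasimonomialdef} lies in $\{0,1\}$, and Corollary~\ref{pushforward} together with Proposition~\ref{functorialitydyvisors} describes $b_*$ as a sum of ordinary homology pushforwards $(b|\w D_{\w J})_*$ with coefficients $a_{\w J J}\in\{0,1\}$ given by determinants of $0/1$-submatrices.

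Next I would filter both Gysin complexes by the number of \emph{exceptional} indices. Let $\w F^p\subset G(\w W,\w X)$ be generated by $H_*(\w D_{\w J})$ with $|\w J\cap(I_Y\cup\{0\})|\geq p$, and let $F^p\subset G(W,X)$ be generated by $H_*(D_J)$ with $|J\cap I_Y|\geq p$. Both are subcomplexes (Gysin differentials only enlarge the index set), and the determinant calculation shows $b_*(\w F^p)\subset F^p$. On the associated graded pieces the map $b_*$ splits over the ``transverse part'' $J_\perp\subset I\setminus I_Y$, reducing the problem, for each fixed $J_\perp$, to a Gysin-type comparison indexed by subsets of $I_Y\cup\{0\}$ on the source and of $I_Y$ on the target. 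The key geometric input is that for $J_Y\subset I_Y$ the stratum $\w D_{J_\perp\sqcup J_Y\sqcup\{0\}}$ is the projectivisation of the normal bundle $\mathcal N_{Y,D_{J_Y}}$ restricted over $Y\cap D_{J_\perp}$; the Leray-Hirsch theorem (Theorem~\ref{lerayhirsch}) decomposes its homology, Lemma~\ref{projection} extracts the top coefficient under $b_*$, and Theorem~\ref{makingGysinsquare} matches it with the Gysin differential of the target.

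The main obstacle is organising these Leray-Hirsch matchings coherently across all $J_Y\subset I_Y$ while tracking the induced Gysin differentials between them. I expect the cleanest execution is by induction on $|I_Y|$: the base case $|I_Y|=1$ amounts to a single application of Theorem~\ref{makingGysinsquare} to the blowup of $W$ along $Y\subset D_{i_0}$; the inductive step is built from the short exact sequence of Gysin complexes associated with a decomposition $D=D''\cup D_{i_0}$ for some $i_0\in I_Y$, in analogy with the short exact sequence used in the proof of Theorem~\ref{blowup1}, reducing each stage to a situation with strictly smaller $|I_Y|$.
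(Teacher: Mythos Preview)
Your reduction to Gysin complexes and the inductive step via the short exact sequence associated to a decomposition $D=D''\cup D_{i_0}$ with $i_0\in I_Y$ are both correct and are exactly what the paper does (this is the paper's induction on $r(D)=|I_Y|$). The description of the exponents $a_{ij}$ and the appeal to Corollary~\ref{pushforward} are also fine.

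The gap is in the base case. Your filtration by $|\w J\cap(I_Y\cup\{0\})|$ does not give quasi-isomorphisms on the associated graded pieces: already at the top level $p=|I_Y|+1$ the target graded piece is zero (since $|J\cap I_Y|\le|I_Y|$), while the source graded piece contains the nontrivial groups $H_*(\w D_{I_Y\cup\{0\}\cup J_\perp})$, which do not form an acyclic complex. So the filtration cannot carry the argument, and the burden falls entirely on the induction. But when $|I_Y|=1$ your inductive splitting (removing the unique $D_{i_0}\supset Y$) leaves $Y$ \emph{outside} the remaining divisor, so you exit the hypotheses of the theorem; and if instead you stop there, the claim that the base case is ``a single application of Theorem~\ref{makingGysinsquare}'' is only correct when $D$ itself is smooth. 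In general there are further components $D_i$ transverse to $Y$, and the Gysin complexes on both sides involve all of their intersections.

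The paper closes this gap with a second induction, on the complexity $c(D)$: when $r(D)=1$ and $c(D)>1$ one splits off a component $V$ with $Y\not\subset V$, obtaining the same short exact sequence as in the transverse case and reducing $c$ while keeping $r=1$. The genuine base case $(r,c)=(1,1)$ is Proposition~\ref{keyquasiisomorphism}, which is more than a single application of Theorem~\ref{makingGysinsquare}: it uses that theorem \emph{twice} (once for $\w W\to W$ with center $Y$, once for $\widehat D\to D$ with center $Y$) and then checks, via a two-case analysis on the Leray--Hirsch decomposition of $H_*(\w Y)$, that the two resulting acyclic squares fit together into the commutative acyclic diagram~\eqref{keyacyclic1}. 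You should supply both the second induction and this two-square argument.
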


By definition of the weight filtration, to prove the theorem it suffices to show that the homomorphism of corner complexes
$$
b_*':(C_*(\w X'), F^*)\to (C_*(X'),F^*)
$$
induces an isomorphism on the $E^2$ term of the corner spectral sequence \eqref{cornerss}. This is equivalent to the statement that the corresponding homomorphism of Gysin complexes 
$$
b_*:G(\w W, \w X) \to G(W, X)
$$
induces an isomorphism in homology. 

First we prove the special case when the divisor $D$ is nonsingular. This is the most involved part of the proof.
 
Let $W = \Xb$ be a good compactification of the smooth variety $X$, and suppose that $D= W\setminus X$ 
is a non-singular divisor in $W$. Let $Y$ be an irreducible smooth $m$-dimensional subvariety of $W$ such that $Y\subset D$.  We assume that the codimension 
of $Y$ in $W$ is bigger than $1$.  
From this data we obtain a blowup square of pairs \eqref{square2}, where the divisor 
$\w D=\w{W} \setminus \w X$ is the union of the proper transform $\widehat D$ of the divisor $D$  and the divisor $\w Y$; \emph{i.e.} $\w D = \widehat D \cup \w Y$.  We let $ \widehat E=\widehat D \cap \w Y$.

\begin{prop}\label{keyquasiisomorphism}
The following diagram is commutative and acyclic,
\begin{equation}\label{keyacyclic1}
\renewcommand{\arraystretch}{1.5}
\begin{array}[c]{ccccccc}
 H_{k} (\w W) &\stackrel{\displaystyle (s^*, \w a^*)}\longrightarrow  & H_{k-1}(\w Y) \oplus H_{k-1}(\widehat D) &\longrightarrow& H_{k-2}(\widehat E)  \\
\hphantom{q_*}\downarrow{p_*} &&p_{1*}\downarrow{p_{2*}} & &\hphantom{p'_*}\downarrow{}  \\
 H_{k} (W ) &\stackrel{\displaystyle a^*}\longrightarrow  & H_{k-1}(D)  & \longrightarrow & 0 
\end{array}
\end{equation}
where the horizontal arrows are Gysin morphisms and the vertical arrows are pushforward maps induced by $p$.   \end{prop}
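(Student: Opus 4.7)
The plan is first to verify commutativity of the diagram and then to prove that its associated simple complex is acyclic. Commutativity of the right square is automatic since its bottom-right vertex is zero. For the left square the content is the identity
\[a^*\circ p_*\;=\;(i_Y)_*\circ p_{1*}\circ s^*\;+\;p_{2*}\circ \w a^*.\]
I would verify this geometrically: represent $\alpha\in H_k(\w W)$ by a semialgebraic $k$-chain $c$ transverse to both $\w Y$ and $\widehat D$, so that $s^*\alpha=[c\cap\w Y]$, $\w a^*\alpha=[c\cap\widehat D]$, and $a^*p_*\alpha=[p(c)\cap D]$. In local good coordinates around a point of $Y$ the scheme-theoretic preimage $p^*D$ equals $\w Y+\widehat D$, each component with multiplicity one, so $p(c)\cap D=p(c\cap\w Y)+p(c\cap\widehat D)$ as $\Z_2$-chains. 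Alternatively this identity follows from Corollary \ref{pushforward} once the exponents $a_{ij}$ are read off from local monomial coordinates for $\fb=p$.

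For acyclicity the plan is to combine two acyclic Gysin squares produced by Theorem \ref{makingGysinsquare}. Applied to $p:\w W\to W$ with smooth center $Y$ of codimension $r=n-m$, the theorem gives a commutative acyclic square
\[\begin{array}[c]{ccc}
H_{k-1}(\w Y)&\stackrel{s^*}{\longleftarrow}&H_k(\w W)\\
\downarrow\w q_* & & \downarrow p_*\\
H_{k-r}(Y)&\stackrel{r^*}{\longleftarrow}&H_k(W),
\end{array}\]
and the Gysin map $r^*$ factors as $H_k(W)\xrightarrow{a^*}H_{k-1}(D)\xrightarrow{i_{Y/D}^*}H_{k-r}(Y)$ because $Y\subset D\subset W$. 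Applied to the blowup $p_2:\widehat D\to D$ of the smooth variety $D$ along the same center $Y$ (of codimension $r-1$ in $D$, with exceptional divisor $\widehat E\subset\widehat D$), the theorem produces a second acyclic square whose lower arrow is precisely $i_{Y/D}^*:H_{k-1}(D)\to H_{k-r}(Y)$. The two squares share this edge and can be spliced together over the common corner $H_{k-r}(Y)$.

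Using the blowup short exact sequences of Proposition \ref{shortexactsequence} applied to both $p$ and $p_2$, I would split $H_k(\w W)=p^*H_k(W)\oplus\ker p_*$ and $H_{k-1}(\widehat D)=p_2^*H_{k-1}(D)\oplus\ker p_{2*}$, with the exceptional pieces described by the isomorphisms $s_*:\ker p_{1*}\stackrel{\cong}{\to}\ker p_*$ and $\widehat s_*:\ker q'_*\stackrel{\cong}{\to}\ker p_{2*}$, where $q':\widehat E\to Y$. Under these splittings the total complex of our rectangle decomposes as the direct sum of a ``base'' part, in which the top row maps isomorphically to the bottom row via $p^*$ and thereby contributes nothing to homology, and an ``exceptional'' part supported on the four kernels together with a piece of $H_{k-2}(\widehat E)$; the exceptional part is handled by the spliced pair of Gysin squares, the second Gysin differential $(\beta,\gamma)\mapsto i_{\w Y}^*\beta+i_{\widehat D}^*\gamma$ providing the final surjection onto $H_{k-2}(\widehat E)$.

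The main obstacle will be the Leray--Hirsch bookkeeping in the exceptional part: tracking how $i_{\w Y}^*$ and $i_{\widehat D}^*$ act on the decompositions of $H_*(\w Y)=H_*(\mathbb P(\mathcal N_Y))$ and $H_*(\widehat E)=H_*(\mathbb P(\mathcal N_Y^D))$ supplied by Theorem \ref{lerayhirsch} and Lemma \ref{projection}. The crucial algebraic input is the Euler-class relation coming from the short exact sequence $0\to\mathcal N_Y^D\to\mathcal N_Y\to\mathcal N_D|_Y\to 0$: it identifies $e(\mathcal N_{\w Y/\w W})|_{\widehat E}$ with $e(\mathcal N_{\widehat E/\widehat D})$ up to the factor $q'^*e(\mathcal N_D|_Y)$, and this matching is what makes the top Leray--Hirsch components of the two exceptional contributions cancel exactly in $H_{k-2}(\widehat E)$, completing the acyclicity check.
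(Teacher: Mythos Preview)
Your strategy is the same as the paper's: use the two acyclic Gysin squares of Theorem \ref{makingGysinsquare}, one for the blowup $p:\w W\to W$ along $Y$ and one for the blowup $p_2:\widehat D\to D$ along $Y$, and splice them along the common corner $H_{k-r}(Y)$. For commutativity the paper simply invokes Corollary \ref{pushforward}, as you suggest in your alternative.

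Where you diverge from the paper is in the execution of the splicing. You propose to split $H_k(\w W)=p^*H_k(W)\oplus\ker p_*$ and $H_{k-1}(\widehat D)=p_2^*H_{k-1}(D)\oplus\ker p_{2*}$, separate the total complex into a base part and an exceptional part, and then control the exceptional part by Leray--Hirsch computations in $H_*(\w Y)$ and $H_*(\widehat E)$ together with the Euler-class relation coming from $0\to\mathcal N_Y^D\to\mathcal N_Y\to\mathcal N_D|_Y\to 0$. This can be made to work, but it is exactly the bookkeeping you flag as the ``main obstacle,'' and it is unnecessary. The paper instead \emph{augments} the diagram \eqref{keyacyclic1} by inserting $H_{k-r}(Y)$ into both the middle of the bottom row and the bottom-right corner, with the maps $\w q_*:H_{k-1}(\w Y)\to H_{k-r}(Y)$ and $\w q'_*:H_{k-2}(\widehat E)\to H_{k-r}(Y)$ supplied by Theorem \ref{makingGysinsquare}. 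The augmented diagram now literally contains the two acyclic squares of type \eqref{Gysinsquare}, so its simple complex is acyclic for free; the only nontrivial check is commutativity of the new right-hand square, which the paper verifies by the two-case argument on the decomposition $H_{k-1}(\w Y)=\im q^*\oplus s^*(\ker p_*)$ of Lemma \ref{directsum}. Since the augmented diagram differs from \eqref{keyacyclic1} by the acyclic piece $H_{k-r}(Y)\xrightarrow{\id}H_{k-r}(Y)$, acyclicity of \eqref{keyacyclic1} follows. This bypasses all the Leray--Hirsch tracking you were anticipating: the $\w q_*$ maps already package the excess-bundle information, and the augmentation makes the cancellation structural rather than computational.
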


\begin{proof}
The bottom row of diagram \eqref{keyacyclic1} is the Gysin complex of $(W,W\setminus D)$ and the top row is the Gysin complex of $(\w W,\w W \setminus \w D)$.  Thus the commutativity of \eqref{keyacyclic1} follows from Corollary \ref{pushforward}.

To show the acyclicity of \eqref{keyacyclic1} we consider the following augmented version of \eqref{keyacyclic1}, 
\begin{equation}\label{keyacyclic2}
\renewcommand{\arraystretch}{1.5}
\begin{array}[c]{ccccccc}
 H_{k} (\w W) &\longrightarrow  & H_{k-1}(\w Y) &\oplus & H_{k-1}(\widehat D) &\longrightarrow& H_{k-2}(\widehat E)  \\
\hphantom{q_*}\downarrow{p_* } &&\downarrow{\w q_*} &\searrow & \downarrow{} & &\hphantom{p'_*}\downarrow{{\w q'_*}}  \\
 H_{k} (W ) &\longrightarrow  & H_{k-m}(Y) &\oplus &  H_{k-1}(D)  &\stackrel{ } \longrightarrow &  H_{k-m}(Y)
\end{array}
\end{equation}
where the horizontal arrows are Gysin morphisms. (In particular $H_{k-m}(Y)  \to H_{k-m}(Y)$ is the identity.)  The morphism $\w q_*$, resp. $\w q'_*$, is 
given by Theorem \ref{makingGysinsquare} for the blowup $p:  \w W \to W$, resp. $p': \widehat D \to D$.   Note that the augmented diagram \eqref{keyacyclic2} includes
two acyclic squares of type \eqref{Gysinsquare}.  Taking into account the commutativity of \eqref{keyacyclic1}, in order to  establish the commutativity of \eqref{keyacyclic2} it suffices to show that the square   
\begin{equation}\label{commutative}
\renewcommand{\arraystretch}{1.5}
\begin{array}[c]{ccccc}
   H_{k-1}(\w Y)  &\stackrel{i^*_{\widehat E,\w Y} } \longrightarrow& H_{k-2}(\widehat E)  \\
{\w q_*}  \downarrow{p_{1*}} & &\hphantom{p'_*}\downarrow{{\w q'_*}}  \\
 H_{k-m}(Y) \oplus   H_{k-1}(D)  &\stackrel{\id + i^*_{Y,D}  } \longrightarrow &  H_{k-m}(Y)
\end{array}
\end{equation}
is commutative, which we prove by considering two cases.  

\smallskip	
\noindent \emph {Case 1.} Let $\beta = q^* \alpha \in  H_{k-1} (\w Y)$,  $\alpha \in H_{k-m} (Y )$.   Then $\w q_* \beta = \w q_* q^* \alpha = \alpha$,  
$p_{1*} \beta= (i_{Y,D})_* q_* q^* \alpha =0$, and $\w q'_* i^*_{\widehat E,\w Y} \beta= \w q'_* (q')^* \alpha = \alpha$.  

\smallskip	
\noindent \emph {Case 2.} Let $\beta = s^* (\alpha) \in H_{k-1} (\w Y)$, $\alpha \in H_{k} (\w W )$.
By the commutativity of the left hand  subdiagram of \eqref{keyacyclic2} of type \eqref{Gysinsquare}, $\w q_* \beta=
i^*_{Y,W } p_* \alpha$. Note that both the top and the bottom rows of \eqref{keyacyclic2} are complexes (\emph{i.e.}~the composition of two consecutive morphisms is zero). Indeed, they are the
simple complexes of the Gysin diagrams associated to commutative squares. Hence $i^*_{\widehat E,\w Y} \beta +
i^*_{\widehat E,\widehat D} \w a ^* (\alpha)=0$ and $\w q_* \beta + i^*_{Y,D}
(p_{1*} \beta + p_{2*} \w a ^* (\alpha))=
i^*_{Y,W } p_* \alpha + i^* _{Y,D} i^*_{D,W } p_* \alpha =0$. Therefore we have
$i^*_{\widehat E,\w Y} \beta = i^*_{\widehat E,\widehat D} \w a ^* (\alpha)$ and $\w q_* \beta + i^*_{Y,D} p_{1*} \beta = i^*_{Y,D} p_{2*} \w a ^* (\alpha) $. Now $\w q'_* i^*_{\widehat E,\widehat D} \w a ^* (\alpha)
= i^* _{Y,D} p_{2*} \w a ^* (\alpha) $ by the commutativity of the right hand  subdiagram of \eqref{keyacyclic2} of type \eqref{Gysinsquare}.

By (b) of Lemma \ref{directsum},  $H_{k-1} (\w Y)$ is generated by  $\im q^* $ and $\im s ^* $.  
Thus the commutativity of \eqref{commutative} follows from cases 1 and 2.  

Recall that to say that the diagram \eqref{keyacyclic2} is \emph{acyclic} means that the associated simple complex is acyclic.  
Thus the diagram \eqref{keyacyclic2} is acyclic since it consists of two acyclic squares of type \eqref{Gysinsquare}.  More precisely, the simple complex of \eqref{keyacyclic2}  is acyclic since it equals 
the simple complex of the following diagram with acyclic rows, 
\begin{equation*}
\renewcommand{\arraystretch}{1.5}
\begin{array}[c]{ccccccc}
 H_{k} (\w W) &\longrightarrow  & H_{k-1}(\w Y) &\oplus & H_{k} (W )&\longrightarrow&  H_{k-m}(Y) \\
\hphantom{q_*}\downarrow{ } &&\downarrow{} &\searrow & \downarrow{} & &\hphantom{p'_*}\downarrow{{}}  \\
  H_{k-1}(\widehat D) &\longrightarrow  & H_{k-2}(\widehat E)  &\oplus &  H_{k-1}(D)  &\stackrel{ } \longrightarrow &  H_{k-m}(Y)
\end{array}
\end{equation*}
It follows that \eqref{keyacyclic1}  is acyclic, since the diagrams \eqref{keyacyclic1}  and \eqref{keyacyclic2}  differ by the acyclic diagram 
$ H_{k-m}(Y) \stackrel{\id } \longrightarrow  H_{k-m}(Y)$.  
\end{proof}

Now we prove Theorem \ref{blowup2} by induction on $(r,c)$, where $r= r(D)$ is the number of smooth components $D_i$ of the divisor $D$ such that $Y\subset D_i$ and $c= c(D)$ is the complexity of $D$. (Since $Y$ is irreducible, $r(D)$ equals the number of irreducible components of $D$ that contain $Y$, so $r(D)$ is independent of the nonsingular decomposition $D = \bigcup_i D_i$.) Proposition \ref{keyquasiisomorphism} is the base case $(r,c)=(1,1)$.

Suppose $r(D)=1$ and $c(D)>1$. Let $\mathcal D$ be a nonsingular decomposition of $D$ with $|\mathcal D| = c(D)$. Let $D = D''\cup V$, where $V=D_0\in \mathcal D$ and $Y\not\subset V$, and let $D'=D''\cap V$. We have a diagram with exact rows which is commutative by Corollary \ref{pushforward}:
\begin{equation}\label{exactsequences}
\renewcommand{\arraystretch}{1.5}
\begin{array}[c]{ccccccccc}
 0&\longrightarrow  & G(\w V, \w V\setminus \w D')[1] &\longrightarrow & G(\w W, \w W\setminus \w D)\longrightarrow&  G(\w W, \w W\setminus \w D'')&\longrightarrow & 0 \\
&&\downarrow{b'_*}&&\downarrow{b_*}&\downarrow{b''_*}&&& \\
  0&\longrightarrow  & G( V,  V\setminus  D')[1] &\longrightarrow & G( W,  W\setminus  D)\longrightarrow&  G( W,  W\setminus D'')&\longrightarrow & 0\end{array}
\end{equation}
Now $c(D')< c(D)$ and $c(D'')<c(D)$, so by induction on $c(D)$ the maps $b'_*$ and $b''_*$ are isomorphisms. Therefore $b_*$ is an isomorphism.

Now suppose $r(D)>1$. Let $D = D''\cup V$, where $V=D_0\in \mathcal D$ and $Y\subset V$, and let $D'= D''\cap V$. Then $r(D')< r(D)$ and $r(D'')<r(D)$, so by induction on $r(D)$ the diagram \eqref{exactsequences} shows that $b_*$ is an isomorphism. This completes the proof of Theorem \ref {blowup2}.


\section{Extension of the weight filtration to singular varieties}\label{weightsingularsection}

Following Guill\'en and Navarro Aznar \cite{navarro}, let $\Sch$ be
the category of reduced separated schemes of finite type over $\R$.  In this paper we are interested in the topology of the set of real points of $X\in \Sch$.  
The set $X(\R) $ of real points of $X$ with its sheaf of regular functions  is 
a real algebraic variety in the sense of Bochnak-Coste-Roy \cite{BCR}.

By an \emph{acyclic square} in $\Sch$ we mean a cartesian diagram \begin{equation}\label{acyclic}
\renewcommand{\arraystretch}{1.5}
\begin{array}[c]{ccc}
\w Y&\longrightarrow&\w X\\ 
\downarrow&&\hphantom{p}\downarrow{p} \\ 
Y &\stackrel{\displaystyle i}\longrightarrow&X
\end{array}
\end{equation}
such that $i$ is a closed immersion, $p$ is proper, and $p$ induces an isomorphism $\w X\setminus \w Y \to X\setminus Y$ \cite[(2.1.1)]{navarro}.

Let $\mathbf V(\R)$ be the category of nonsingular projective schemes over $\R$, and let $\mathbf V^2(\R)$ be the category of pairs $(W,X)$ such that $W\in \mathbf V(\R)$, $X$ is an open subscheme of $W$, and $D=W\setminus X$ is a divisor with normal crossings in $W$.

An \emph{elementary acyclic square} in $\mathbf V^2(\R)$ \cite[(2.3.1)]{navarro} is a diagram
\begin{equation}\label{elementaryacyclic}
\renewcommand{\arraystretch}{1.5}
\begin{array}[c]{ccc}
(\w Y,\w Y\cap \w X)&\longrightarrow&(\w W,\w X)\\ 
\downarrow&&\hphantom{p}\downarrow{p} \\ 
(Y,Y\cap X) &\stackrel{\displaystyle i}\longrightarrow&(W,X)
\end{array}
\end{equation}
such that $p:\w W\to W$ is the blowup of $W$ with smooth center $Y$ that has normal crossings with the divisor $D= W\setminus X$, where $\w X = p\inv X$ and $\w Y = p\inv Y$. (The condition that $Y$ has normal crossings with $D$ includes both $Y\not\subset D$ and $Y\subset D$.)

Let $\mathcal C$ be the category of bounded complexes of $\Z_2$ vector spaces with increasing bounded filtration. Following \cite{navarro} we denote by $Ho\, \mathcal C$ the category $\mathcal C$ localized with respect to filtered quasi-isomorphisms. By Proposition $(1.7.5)^{op}$ of \cite{navarro}, the category $\mathcal C$ with this notion of quasi-isomorphism and the simple complex operation for cubical diagrams is a category of homological descent \cite[\S 1A]{weight}.

A $\mathbf \Phi$-\emph{rectification} of a functor $G$ with values in a derived category $\HCat$ is an extension of $G$ to a functor of finite orderable diagrams, with values in the derived category of diagrams, satisfying certain naturality properties \cite[(1.6.5)]{navarro}, \cite[p.~125]{weight}. A factorization of $G$ through the category $\mathcal C$ determines a canonical rectification of $G$.

We define a functor 
$$
\mathbf F:\mathbf V^2(\R)\to Ho\, \mathcal C
$$
as follows. If $(W,X)\in \mathbf V^2(\R)$, then the real algebraic variety $W(\R)$ is a good compactification of $X(\R)$. Let $X'$ be the associated corner compactification of $X(\R)$, and set
$$
\mathbf F(W,X) = \mathcal WC_*(X'),
$$
the weight complex of this good compactification \eqref{weightcomplex}. We have that $\mathbf F$ is a functor by the functoriality of the semialgebraic chain complex $C_*(X')$ \cite[Appendix]{weight} and Theorem \ref{functoriality}.

\begin{thm} \label{navarro}
There is a covariant $\mathbf \Phi$-rectified functor 
$$
\mathcal WC_*:\Sch \to \HCat
$$ 
such that 

(1) if $(W,X)\in \mathbf V^2(\R)$ there is a natural isomorphism $\mathcal WC_*(X) \cong \mathbf F(W,X)$,

(2) $\mathcal WC_*$ satisfies the following acyclicity property:
For an acyclic square (\ref{acyclic})
the simple filtered complex of the diagram
\begin{equation*}
\renewcommand{\arraystretch}{1.5}
\begin{array}[c]{ccc}
\mathcal WC_*(\w Y)&\longrightarrow&\mathcal WC_*(\w X)\\ 
\downarrow&&\downarrow \\ 
\mathcal WC_*(Y) &\longrightarrow&\mathcal WC_*(X)
\end{array}
\end{equation*}
is acyclic (quasi-isomorphic 
to the zero complex).  

Such a functor $\mathcal WC_*$ is unique up to a unique quasi-isomorphism. 
\end{thm}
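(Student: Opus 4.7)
The plan is to apply the extension theorem (2.3.6) of Guill\'en and Navarro Aznar \cite{navarro}, which is tailored exactly for this kind of extension problem. That theorem takes a covariant functor $G:\mathbf{V}^2(\R)\to\HCat$ satisfying (F1) $G(\emptyset)=0$ together with additivity on disjoint unions, and (F2) sending elementary acyclic squares in $\mathbf{V}^2(\R)$ to acyclic squares in $\HCat$, and produces a unique (up to filtered quasi-isomorphism) $\Phi$-rectified extension $G':\Sch\to\HCat$ sending every acyclic square to an acyclic square. Applied to $\mathbf F$, this will furnish the functor $\mathcal{WC}_*$ together with the naturality isomorphism in (1), the acyclicity property in (2), and the uniqueness clause. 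Because $\mathbf F$ factors through the strict category $\Cat$ of bounded filtered complexes and not merely its localization, the canonical $\Phi$-rectification of $\mathbf F$ is provided automatically, as noted in the paragraph preceding the theorem statement.

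The first task is to verify the hypotheses of GNA for $\mathbf F$. Functoriality of $\mathbf F$ on $\mathbf{V}^2(\R)$ follows from Theorem \ref{cornermap} and Theorem \ref{functoriality}: a morphism $(W_1,X_1)\to(W_2,X_2)$ comes from a regular map $\fb:W_1\to W_2$ carrying $X_1$ into $X_2$, and extends uniquely to a semialgebraic map $f':X_1'\to X_2'$ whose induced semialgebraic chain map preserves the corner filtration and therefore its Deligne shift, giving a well-defined morphism in $\HCat$; composition is preserved by the uniqueness in Theorem \ref{cornermap}. Property (F1) is clear: the empty scheme has empty corner compactification, and the corner compactification of a disjoint union is visibly the disjoint union of corner compactifications, so $\mathbf F$ is additive already at the level of filtered semialgebraic chain complexes.

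The substantive hypothesis is (F2). An elementary acyclic square \eqref{elementaryacyclic} in $\mathbf{V}^2(\R)$ arises from the blowup of $W$ along a smooth subvariety $Y$ with normal crossings with $D=W\setminus X$, and splits into two cases according to whether $Y\not\subset D$ or $Y\subset D$. In the first (transverse) case, the required acyclicity of the simple filtered weight complex of the square is exactly Theorem \ref{blowup1}. In the second (contained) case, $Y\cap X$ and $\w Y\cap\w X$ are empty, so $\mathbf F(Y,Y\cap X)=\mathbf F(\w Y,\w Y\cap \w X)=0$, and acyclicity of the simple complex of the resulting square of weight complexes reduces to the assertion that $\mathbf F(\w W,\w X)\to\mathbf F(W,X)$ is a filtered quasi-isomorphism, which is Theorem \ref{blowup2}. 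With (F1) and (F2) verified, GNA (2.3.6) produces the functor $\mathcal{WC}_*$ with properties (1) and (2), and its uniqueness clause yields the final sentence of the theorem. The main obstacle, the elementary acyclicity (F2), has already been overcome in Sections \ref{BlowupSquaresection}--\ref{Blowupcontainedsection}; once it is in hand the application of the extension theorem is essentially formal.
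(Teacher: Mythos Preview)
Your proposal is correct and follows essentially the same approach as the paper: both apply Guill\'en--Navarro Aznar's extension theorem $(2.3.6)$ to $\mathbf F$, noting that the $\mathbf\Phi$-rectification is automatic since $\mathbf F$ factors through $\mathcal C$, that (F1) is clear, and that (F2) follows from Theorems \ref{blowup1} and \ref{blowup2}. Your treatment is slightly more explicit in spelling out why the contained case reduces to a single filtered quasi-isomorphism, but the argument is the same.
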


\begin{proof} 
This theorem follows from applying \cite[Theorem $(2.3.6)^{op}$]{navarro} to the functor $\mathbf F$. 
Since $\mathbf F$ factors through $\mathcal C$, it is automatically $\mathbf\Phi$-rectified \cite[(1.6.5), (1.1.2)]{navarro}. 
Clearly $\mathbf F$   is additive for disjoint unions (condition (2.1.5) (F1) of \cite{navarro}). It remains  to check  condition (2.1.5) (F2) of \cite{navarro}: Given an elementary acyclic square \eqref{elementaryacyclic}, the simple
filtered complex  associated to the  square
\begin{equation*}
\renewcommand{\arraystretch}{1.5}
\begin{array}[c]{ccc}
\mathbf F(\w Y,\w Y\cap \w X)&\longrightarrow&\mathbf F(\w W,\w X)\\ 
\downarrow&&\downarrow \\ 
\mathbf F(Y,Y\cap X) &\longrightarrow&\mathbf F(W,X)
\end{array}
\end{equation*}
is acyclic.  This follows from our blowup results Theorem \ref{blowup1} and Theorem \ref{blowup2}. \end{proof}

\begin{rem}
This theorem shows not only that the weight complex functor extends to singular varieties, but also that, up to quasi-isomorphism, the weight complex  \eqref{weightcomplex} of a smooth real algebraic variety $X$ does not depend on the choice of a good compactification.
\end{rem}

\begin{prop}\label{homology}
For all $X\in\Sch$ the homology of the weight complex $\mathcal WC_*(X)$ is the classical compactly supported homology of $X$ with $\Z_2$ coefficients,
$$
H_*(\mathcal WC_*(X)) = H_*(X).
$$
If $X$ has dimension $n$, for each $k\geq 0$ the filtration of $H_k(X)$ given by this identification satisfies
\begin{equation}\label{singularweight}
0 = \mathcal W_{-n-1}H_k(X)\subset \mathcal W_{-n}H_k(X)\subset\cdots\subset \mathcal W_{0}H_k(X) = H_k(X).
\end{equation}
\end{prop}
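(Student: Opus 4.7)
The plan is first to identify the homology of $\mathcal{WC}_*(X)$ with classical $\Z_2$-homology in a way that is natural in $X$, and then to read off the filtration bounds directly from the construction.

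For the homology identification in the smooth case, if $X$ is smooth with good compactification $\bar X$ then the underlying (unfiltered) chain complex of $\mathcal{WC}_*(X)$ is $C_*(X')$, where $X'$ is the corner compactification. Since the inclusion $\lambda:X\hookrightarrow X'$ is a semialgebraic homotopy equivalence (established after Proposition \ref{localcoordinates}), $\lambda_*$ provides a natural isomorphism $H_*(X)\cong H_*(\mathcal{WC}_*(X))$. Theorem \ref{cornermap} guarantees naturality under regular maps extending to good compactifications, and for a smooth projective $W$ the divisor at infinity is empty, so $X'=\bar X=W$ and the isomorphism is literally the identity.

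To extend to singular $X$ I would use descent. Both $X\mapsto H_*(X)$ and $X\mapsto H_*(\mathcal{WC}_*(X))$ are additive for disjoint unions, and send every acyclic square \eqref{acyclic} to a Mayer--Vietoris long exact sequence
$$
\cdots\to H_k(\w Y)\to H_k(Y)\oplus H_k(\w X)\to H_k(X)\to H_{k-1}(\w Y)\to\cdots.
$$
On the weight side this is Theorem \ref{navarro}(2), since the simple filtered complex of an acyclic square is acyclic. On the classical side, it follows from excision once one observes that $p$ is proper and a homeomorphism on $\w X\setminus\w Y$, so $X$ is the semialgebraic pushout $\w X\sqcup_{\w Y}Y$. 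Since by resolution of singularities any $X\in\Sch$ fits in an acyclic square with $\w X$ smooth of dimension $\dim X$ and with $Y$ and $\w Y$ of strictly smaller dimension, induction on $\dim X$ together with the five-lemma upgrades the natural isomorphism from the smooth case to all of $\Sch$.

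The filtration bounds are then immediate from the construction. The upper bound $\mathcal{W}_0 H_k(X)=H_k(X)$ is essentially tautological: the Deligne shift formula \eqref{weightfiltration} combined with $F^0C_*(X')=C_*(X')$ yields $\mathcal{W}_0C_*(X')=C_*(X')$ at the chain level in the smooth case, and this property is preserved by the simple-complex construction used in the extension. The lower bound $\mathcal{W}_{-n-1}H_k(X)=0$ in dimension $n$ holds for smooth $X$ because $F^{n-k+1}C_k(X')=0$ (noted after \eqref{cornercomplex}) forces $\mathcal{W}_{-n-1}C_k(X')=0$ via the Deligne shift. For singular $X$ of dimension $n$, one inducts on $n$: the smooth piece $\w X$ satisfies the bound by the smooth case, the pieces $Y,\w Y$ of strictly smaller dimension $m$ satisfy the stronger chain-level bound $\mathcal{W}_{-m-1}=0$ by induction (which implies $\mathcal{W}_{-n-1}=0$ since the filtration is increasing), and the simple complex of the acyclic square inherits $\mathcal{W}_{-n-1}=0$, so the bound passes to $H_k(X)$.

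The main obstacle I anticipate is establishing the Mayer--Vietoris sequence for classical homology of an acyclic square, since classical homology lacks the strong additivity used for Borel--Moore homology in \cite{weight}. The argument must identify $X$ with the semialgebraic pushout along the proper map $p$ and then invoke excision carefully; this is essentially standard but is the one point where real care is required.
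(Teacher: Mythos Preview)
Your approach is sound in outline but differs from the paper's, and has one genuine gap worth naming.

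For the first assertion, the paper does not induct over acyclic squares at all. Instead it applies the forgetful functor $\mathcal C\to\mathcal D$ (bounded $\Z_2$-complexes, no filtration) and observes that both $X\mapsto C_*(X)$ and $X\mapsto(\text{forget})\mathcal{WC}_*(X)$ are functors $\Sch\to Ho\,\mathcal D$ that agree on $\mathbf V^2(\R)$ and satisfy conditions (F1), (F2) of \cite{navarro}; the \emph{uniqueness} clause of the extension theorem then gives a natural isomorphism on all of $\Sch$ in one stroke. Your Mayer--Vietoris argument for the semialgebraic pushout is exactly the verification of (F2) for $C_*$, so you have the right ingredient, but your inductive packaging has a gap: to invoke the five-lemma you need a map $H_*(X)\to H_*(\mathcal{WC}_*(X))$ for the \emph{singular} $X$ that fits into the ladder of long exact sequences, and your induction only furnishes isomorphisms for the three other corners. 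Defining that map is precisely what the uniqueness theorem does for you; without it, the five-lemma tells you nothing. The cleanest fix is simply to adopt the paper's route and invoke uniqueness directly once (F2) is checked.

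For the filtration bounds, the paper again avoids induction: it computes $\mathcal{WC}_*(X)$ as the simple filtered complex of a cubical hyperresolution $X_\bullet$, with each $X^{(q)}$ smooth of dimension $\leq n-q$, and reads off $\mathcal W_{-n-1}=0$ and $\mathcal W_0=\text{everything}$ at the chain level directly from \eqref{weightcomplex} via the explicit formula \eqref{cubical}. Your inductive version is also correct, but note that $\mathcal{WC}_*(X)$ lives in $Ho\,\mathcal C$, so ``chain-level vanishing $\mathcal W_{-n-1}=0$'' only makes sense after choosing a representative; the cubical hyperresolution furnishes exactly such a representative, which is why the paper prefers it. Your single-acyclic-square induction implicitly constructs the same thing one blowup at a time.
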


\begin{proof} 
The proof of the first assertion is parallel to the proof of  of \cite[Proposition 1.5]{weight}. (One considers the forgetful functor from the category $\mathcal C$ to the category $\mathcal D$ of bounded complexes of $\Z_2$ vector spaces.)  

The second assertion follows from the fact that the weight complex $\mathcal WC_*(X)$ can be computed as the simple filtered complex associated to the diagram of filtered complexes given by a \emph{cubical hyperresolution} of $X$. This is the basic construction of Guill\' en and Navarro Aznar \cite{navarro}. If $\dim X = n$  there is an $n$-cubical diagram $X_\bullet$  in $\Sch$, \emph{i.e.} a contravariant functor from the set of subsets of $\{0,\dots,n\}$ to $\Sch$, with $X= X_\bullet(\emptyset)$, and $X_\bullet(S)$ smooth for $S\neq\emptyset$. For $q\geq 0$, if $X^{(q)}$ is the disjoint union of the smooth schemes $X_\bullet(S)$ for $|S| = q+1$, then $\dim X^{(q)}\leq n-q$, and we have\begin{equation}\label{cubical}
\begin{aligned}
\mathcal W_iC_k(X) &= \bigoplus_{l+q =k} \mathcal W_iC_l(X^{(q)}) ,\\
\partial: \mathcal W_iC_l(X^{(q)})&\to \mathcal W_iC_{l-1}(X^{(q)})\oplus \mathcal W_iC_l(X^{(q-1)}) ,
\end{aligned}
\end{equation}
where $\partial c = \partial'c + \partial''c$, with $\partial'$ the boundary map of the chain complex $C_l(X^{(q)})$ and $\partial''$ the chain homomorphism induced by the map $X^{(q)}\to X^{(q-1)}$ given by the cubical diagram.

By \eqref{weightcomplex} we have $\mathcal W_iC_l(X^{(q)})=0$ for $i<-\dim X^{(q)}= -n+q$ and $\mathcal W_iC_l(X^{(q)})= C_l(X^{(q)})$ for $i\geq -l$. Since $q\geq 0$ and $l\geq 0$, we have $\mathcal W_iC_k(X)= 0$ for $i< -n$ and $\mathcal W_iC_k(X)=C_k(X)$ for $i\geq 0$.
\end{proof}

The filtration \eqref{singularweight} is the \emph{weight filtration} of the homology of $X$. It is an interesting problem to describe the relation of this filtration to Deligne's weight filtration  \cite{deligne} for the complex points of $X$.

If $X\in \Sch$ let $\mathcal WC_*^{BM}(X)$ denote the  weight complex of Borel-Moore chains  (semialgebraic chains with closed supports) of $X(\R)$ defined in \cite[Theorem 1.1]{weight}. 

\begin{prop}\label{oldweight}
There is a natural transformation of functors $\theta: \mathcal WC_*\to \mathcal WC_*^{BM}$.  If $X\in \Sch$ the canonical homomorphism $\varphi_X: H_*(X) \to H_*^{BM}(X) $ is induced by the morphism $\theta_X: \mathcal WC_*(X)\to \mathcal WC_*^{BM}(X)$, and so $\varphi_X$ is compatible with the weight filtrations.  If $X$ is compact (\emph{i.e.} proper over $\R$) the morphism $\theta_X$ is a quasi-isomorphism.
\end{prop}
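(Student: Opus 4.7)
The plan is to construct $\theta$ first on the category $\mathbf V^2(\R)$ of smooth pairs and then to extend it via the uniqueness part of the Guillén--Navarro extension theorem. For $(W,X)\in\mathbf V^2(\R)$, let $\pi : X'\to W$ be the corner compactification, set $D = W\setminus X$, and consider the chain map
$$
\theta_{(W,X)} : C_*(X')\xrightarrow{\pi_*} C_*(W) \longrightarrow C_*(W)/C_*(D) = C_*^{BM}(X),
$$
with the second arrow the natural quotient. Postcomposition with $\lambda_*$ for $\lambda: X\hookrightarrow X'$ agrees with the canonical inclusion of compactly supported chains into Borel-Moore chains, because $\pi\circ\lambda$ equals the inclusion $X\hookrightarrow W$; in particular $\theta_{(W,X)}$ induces $\varphi_X$ on homology in the smooth case.

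The main technical obstacle is to verify that $\theta_{(W,X)}$ is a morphism of filtered complexes in $Ho\,\mathcal C$. The key observation is that $\pi_*$ annihilates $F^1 C_*(X')$: over $\Z_2$, a generator $[G(J)\Gamma]$ of $F^J C_*(X')$ with $|J|\geq 1$ pushes forward to $2^{|J|}[\pi(\Gamma)]=0$. By Proposition~\ref{shortexact}, $\pi_*$ therefore factors through an isomorphism $C_*(X')/F^1 C_*(X')\xrightarrow{\sim} C_*(W)$. Combining the filtered quasi-isomorphism from the corner complex to the cohomology \v Cech complex (Theorem~\ref{cech-corner}), the quasi-isomorphism between $C_*^{BM}(X)$ with its weight filtration and the homology \v Cech complex with the Deligne shift of the standard filtration \cite[Proposition 1.9]{weight}, and the functoriality of the Deligne shift, this factorization yields a filtered morphism $\mathcal WC_*(X)\to\mathcal WC_*^{BM}(X)$ in $Ho\,\mathcal C$.

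Having defined $\theta$ on $\mathbf V^2(\R)$, I extend it to a natural transformation on $\Sch$ by the uniqueness clause of \cite[Theorem (2.3.6)]{navarro} (applied for instance to the mapping cone of $\theta$, together with the fact that both $\mathcal WC_*$ and $\mathcal WC_*^{BM}$ are Guillén--Navarro extensions of their restrictions to $\mathbf V^2(\R)$). The identification of the induced map on homology with $\varphi_X$ for singular $X$ then follows by naturality from the smooth case: choose a cubical hyperresolution $X_\bullet$ and compare the two simple filtered complexes via $\theta_{X_\bullet}$, all of whose components realize $\varphi$.

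For the compact case: when $X$ is smooth and compact, $D=\emptyset$ and $X' = X$, so both $\mathcal WC_*(X)$ and $\mathcal WC_*^{BM}(X)$ coincide with $C_*(X)$ endowed with the Deligne shift of the trivial filtration $F^0=C_*,\ F^1=0$, and $\theta_X$ is the identity. For a general compact $X$, choose a cubical hyperresolution in which each $X^{(q)}$ is smooth projective; then both weight complexes of $X$ are the simple filtered complex of the same diagram of smooth compact pieces, on each of which $\theta$ is the identity. Hence $\theta_X$ is a filtered quasi-isomorphism on every compact $X$.
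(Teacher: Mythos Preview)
Your construction of the chain map $\theta_{(W,X)} : C_*(X')\to C_*(W)/C_*(D) = C_*^{BM}(X)$ is correct, and it does induce the canonical $\varphi_X$ on homology. The gap is in your verification that $\theta_{(W,X)}$ is a morphism of \emph{filtered} complexes. You observe that $\pi_*$ annihilates $F^1C_*(X')$, which shows the map factors through $C_*(X')/F^1 \cong C_*(W)$; but this says nothing about compatibility with the weight filtration on the target $C_*^{BM}(X)$, which in \cite{weight} is not the quotient of the trivial filtration on $C_*(W)$. Your appeal to Theorem~\ref{cech-corner} and \cite[Prop.~1.9]{weight} does not close the gap: the former identifies the corner complex with the \emph{cohomology} \v Cech complex, the latter identifies $\mathcal WC_*^{BM}(X)$ with the \emph{homology} \v Cech complex, and you supply no filtered map between these two (dual, not directly comparable) objects, nor any argument that such a map would represent your concrete $\theta_{(W,X)}$ in $Ho\,\mathcal C$. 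Without this, the subsequent extension step (via mapping cones or uniqueness) has no input to extend.

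The paper sidesteps this difficulty by a different route. Rather than constructing a filtered chain map on $\mathbf V^2(\R)$ and extending, it uses the intermediate extension $\mathbf F'$ of $\mathbf F$ to the category $\mathbf{Sch^2_{Comp}}(\R)$ of compactifiable pairs (the first step in the proof of \cite[Thm.~2.3.6]{navarro}) and writes $\theta_X$ as the composite
\[
\mathcal WC_*(X)\cong \mathbf F'(\Xb,X) \longrightarrow \mathbf F'(\Xb,\Xb) \cong \mathcal WC_*(\Xb) \cong \mathcal WC^{BM}_*(\Xb) \longrightarrow \mathcal WC^{BM}_*(X),
\]
where the second arrow is functoriality of $\mathbf F'$ for the morphism of pairs $(\Xb,X)\to(\Xb,\Xb)$, the isomorphism $\mathcal WC_*(\Xb)\cong\mathcal WC_*^{BM}(\Xb)$ comes from \cite[Cor.~2.3.7]{navarro} (both functors extend the same functor on smooth projective varieties, hence agree on all compact ones), and the last arrow is the restriction map built into the additivity of $\mathcal WC_*^{BM}$. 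Every arrow is filtered by already-established functoriality, so no ad~hoc check is needed, and the compact case is immediate by taking $\Xb=X$. Your hyperresolution argument for the compact case is fine in isolation, but it presupposes that $\theta$ exists as a natural transformation in $Ho\,\mathcal C$, which is precisely the unproved step.
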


\begin{proof}
Let $\mathbf {Sch^2_{Comp}}(\R)$ be the category of pairs $(W,X)$, where $W$ is compact and $X$ is an open subscheme of $W$. Theorem (2.3.6) of \cite{navarro} is proved in two steps. The first step is \cite[Theorem (2.3.3)]{navarro}, the extension property for the inclusion $\mathbf V^2(\R)\to \mathbf {Sch^2_{Comp}}(\R)$. By this theorem, our functor $\mathbf F$ on $\mathbf V^2(\R)$ extends to a functor  $\mathbf F'$ on $\mathbf {Sch^2_{Comp}}(\R)$ satisfying the conditions of \cite[Theorem (2.1.5)]{navarro}.  For the second step, the proof of \cite[Theorem (2.3.6)]{navarro} shows that restriction of $\mathbf F'$ to the second factor gives a well-defined functor $\mathcal WC_*$ on $\Sch$ satisfying \cite[(2.1.5)]{navarro}.

Thus we have  a sequence of natural morphisms 
$$
\mathcal WC_*(X)\cong \mathbf F'(\Xb,X) \to \mathbf F'(\Xb,\Xb) \cong \mathcal WC_*(\Xb) \cong \mathcal WC^{BM}_*(\Xb) \to \mathcal WC^{BM}_*(X),
$$
where $\Xb$ is any compactification of $X$, the first and second quasi-isomorphisms are given by the extension results described above, and the third quasi-isomorphism is given by \cite[Corollary (2.3.7)]{navarro}.  If $X$ is compact then we can take $\Xb = X$, in which case the second and fifth morphisms above are identities. 
\end{proof}

Consider the weight filtration \eqref{singularweight} of a variety $X$.
If $X$ is nonsingular and quasi-projective then, by \eqref{weightcomplex}, $\mathcal W_{-k}H_k(X) = H_k(X)$.  If $Y$ is compact then, by Proposition \ref{oldweight} and \cite[p.~129]{weight}, $\mathcal W_{-k-1}H_k(Y)=0$.  Thus if $f: X\to Y$ is a regular morphism from 
a nonsingular quasi-projective variety to a compact variety, then $\im  [f_k :H_k(X) \to H_k(Y)] \subset \mathcal W_{-k}H_k(Y)$ and $\mathcal W_{-k-1}H_k(X) \subset \ker [f_k :H_k(X) \to H_k(Y)]$. 
   Thus if $[c] \in \mathcal W_{-k-1}H_k(X)$ then 
$c$ is a boundary in any algebraic compactification of $X$.   

In the special case of a good compactification this result is sharp:  

\begin{prop}
Let $X$ be a nonsingular quasi-projective variety and let $i : X\to \Xb$ be the inclusion in a good compactification of $X$.  Then for all $k\geq 0$,
$\ker [i_k: H_k(X)\to H_k(\Xb)] = \mathcal W_{-k-1}H_k(X) $.  In particular, $\ker i_k$ does not depend on the choice of a good compactification.  
\end{prop}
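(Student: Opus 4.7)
The plan is to reduce the computation of $\ker i_k$ to the short exact sequence of Proposition \ref{shortexact} and then to unwind the definition of the Deligne shift at the top of the weight filtration. First, I would factor the inclusion as $i = \pi \circ \lambda$, where $\lambda: X \to X'$ is the homotopy equivalence into the corner compactification and $\pi:X' \to \Xb$ is the projection. Since $\lambda_*$ is an isomorphism, it suffices to compute $\ker [\pi_k: H_k(X') \to H_k(\Xb)]$.

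Next, I would invoke Proposition \ref{shortexact} to obtain the short exact sequence of chain complexes
$$0 \to F^1 C_*(X') \to C_*(X') \xrightarrow{\pi_*} C_*(\Xb) \to 0.$$
From the associated long exact sequence in homology, $\ker \pi_k$ equals the image of $H_k(F^1 C_*(X'))$ in $H_k(X')$; that is, $\ker \pi_k$ consists exactly of the classes representable by a cycle lying in $F^1 C_k(X')$.

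The crucial remaining step is to identify this image with $\mathcal W_{-k-1} H_k(X)$. By the Deligne shift formula \eqref{weightfiltration},
$$\mathcal W_{-k-1} C_k(X') = \{c \in F^1 C_k(X') : \partial c \in F^2 C_{k-1}(X')\},$$
so for any $k$-cycle $c$ the condition $\partial c \in F^2$ is automatic, and the $k$-cycles of $\mathcal W_{-k-1} C_*(X')$ coincide with the $k$-cycles of $F^1 C_*(X')$. Therefore the image of $H_k(\mathcal W_{-k-1} C_*(X'))$ in $H_k(X')$ equals the image of $H_k(F^1 C_*(X'))$. By definition of the weight filtration on homology, the former image is precisely $\mathcal W_{-k-1} H_k(X')$, which transports via $\lambda_*^{-1}$ to $\mathcal W_{-k-1} H_k(X)$.

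I expect the main obstacle to be little more than the bookkeeping with the Deligne shift in the previous paragraph; the deeper structural input (the semialgebraic cover $\pi:X'\to \Xb$ and the local trivializations used in Proposition \ref{shortexact}) is already available. The independence of $\ker i_k$ from the choice of good compactification then follows immediately from Theorem \ref{navarro}, since $\mathcal W_{-k-1} H_k(X)$ is by construction an algebraic invariant of $X$.
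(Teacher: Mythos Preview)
Your proposal is correct and follows the same approach as the paper, which simply cites Proposition \ref{shortexact}; you have written out in full the Deligne-shift bookkeeping that the paper leaves implicit. In particular, your observation that the $k$-cycles of $\mathcal W_{-k-1}C_*(X')$ coincide with the $k$-cycles of $F^1C_*(X')$ is exactly the point needed to pass from the short exact sequence to the identification of $\ker\pi_k$ with $\mathcal W_{-k-1}H_k(X')$.
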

\begin{proof}
This follows from Proposition \ref{shortexact}.  
\end{proof}

\begin{example}

Let $X\subset \R^2$ be given by $xy\ne 0$.  The embedding $X\subset  \proj ^2(\R)$ is 
a good compactification of $X$ and  $H_0(X) = (\Z_2)^4$,  $\mathcal W_{-1} H_0(X) = (\Z_2)^3$, and 
$\mathcal W_{-2} H_0(X) = \Z_2$.
Let $Y\subset \R^2$ be given by $x(x-1)(x+1)\ne 0$.  The embedding $Y\subset  \proj ^2(\R)$ is not 
a good compactification since $\proj ^2(\R) \setminus Y$ is the union of four lines intersecting at one point.  
By blowing up this point we obtain a good compactification of $Y$.  Then $H_0(Y) = (\Z_2)^4$, $\mathcal W_{-1} H_0(Y) = (\Z_2)^3$, and
$\mathcal W_{-2} H_0(Y) = 0$.
In particular $X$ and $Y$ are not isomorphic.  
\end{example}

\begin{example}
Let $X =  \proj ^1(\R)\times  \R$.  Then  $\proj ^1(\R)\times  \proj^1(\R )$ is a good compactification of $X$  and  
$\mathcal W_{-2} H_1(X) = 0$.  (The generator of $H_1(X)$ is not a boundary in   $\proj ^1(\R)\times  \proj^1(\R )$.)
Let $Y =  \R^2\setminus 0$.  To obtain a good compactification of $Y$ we embed $Y$ in  $\proj ^2(\R)$  and blow up the origin.   Then $\mathcal W_{-2} H_1(X) = \Z_2$. (The generator of $H_1(Y)$ is already a boundary in   $\proj ^2(\R)$.)
In particular $X$ and $Y$ are not isomorphic.  
\end{example}

\begin{example} 
(1) The inclusion $  \R^* \subset \proj ^1(\R)$ is a good compactification of $ \R^*$.  Thus $\mathcal W_{0} H_0(\R^*) = H_0(\R^*)  =(\Z_2)^2$ and $\mathcal W_{-1} H_0(\R^*)  =\Z_2$.

(2)
Let $X $ be the Bernoulli lemniscate $\{(x^2+y^2) ^2  = x^2-y^2\} \subset \R^2$.  The resolution of $X$ is given by blowing up the origin 
$\pi : \w X \to X$, and then $\w X$ is  diffeomorphic to $S^1$.  The exceptional divisor $E=\pi\inv (0)$ is the union of two points.  Thus  $X_{01}= \{0\}$, $X_{10}= \w X$, $X_{11}= E$ is a cubical hyperresolution of $X$.  
Hence $\mathcal W_{-1} H_1(X) = \Z_2 \subset  H_1(X)=(\Z_2)^2$.   This also follows from \cite[3.3]{weight}.  Indeed, $\mathcal W_{-k} H_k(W) $  is the lowest filtration that can be non-zero for $W$ compact, and  the homology classes 
of $\mathcal W_{-k} H_k(W)$ are precisely those that are represented by the arc-symmetric sets.  
In our case,  the generator of $\mathcal W_{-1} H_1(X)$ is the fundamental class of $X$.  The elements of 
$ H_1(X)\setminus \mathcal W_{-1} H_1(X)$ are represented by the cycles that are halves of the lemniscate; they are 
not arc-symmetric.  

(3)
Let $Y = X\times  \R^* $ where $X$ is  the Bernoulli lemniscate.  Then  $Y_{01} = X_{01}\times  \R^* $, 
 $Y_{10} = X_{10}\times  \R^* $,  $Y_{11} = X_{11}\times  \R^* $ is a cubical hyperresolution of $Y$.  
Hence, using \eqref{cubical}, we obtain that $\Z_2 = \mathcal W_{-2} H_1(Y) \subset \mathcal W_{-1} H_1(Y) =  (\Z_2)^3$, 
$H_1(Y) =(\Z_2)^4$.    The generator of $\mathcal W_{-2} H_1(Y)$ is given by the product 
 of the fundamental class of $X$ with the generator of  $\mathcal W_{-1} H_0(\R^*)$.  
 The generators of  $\mathcal W_{-1} H_1(Y)$ are of two types.  The first type is the  product 
 of the fundamental class of $X$ with a generator of $ H_0(\R^*)$.  The second type is the  product of 
 a generator of $ H_1(X)$ (a  half of the lemniscate) with the generator of $\mathcal W_{-1} H_0(\R^*) $.  
 The sum of these four elements is zero and any three of them generate $\mathcal W_{-1} H_1(Y)$ as a $\Z_2$ vector space.
 Note that the generators of the second type cannot be represented by arc-symmetric cycles.  
\end{example}


\section{Appendix: Discrete torus groups}\label{appendix}

The following discussion is adapted from \cite{BFMH}. Let $(G,\cdot)$ be the group of functions $g: \{1,\dots,n\}\to \{1, -1\}$ with product $(g\cdot h)(k) = g(k)h(k)$. Thus $G =\{1,-1\}^n$,  the set of elements of order 2 of  the torus $(S^1)^n\subset (\C^*)^n$. We refer to $G$ as a \emph{discrete torus} of rank $n$.

Let $(V,+)$ be the additive group corresponding to $(G,\cdot)$. If $g\in G$, let $g'$ denote the corresponding element of $V$, so that $g'+h' = (g\cdot h)'$ and $1' = 0$. Since $g\cdot g= 1$ for all $g\in G$, we have $g' + g' = 0$ for all $g'\in V$, and so $V$ is a vector space over $Z_2$, with $\dim_{\Z_2}V = n$. If $H$ is a subgroup of $G$, we say that $H$ has \emph{rank} $p$ if the corresponding subgroup $H'$ of $V$ has dimension $p$ over $\Z_2$.

Let $A = \Z_2[G]$ be the $\Z_2$ group algebra of $G$. The algebra $A$ is the set of finite formal sums $\sum_i a_i [g_i]$, where $a_i\in \Z_2$ and $g_i\in G$, with addition and multiplication defined by 
$$
\begin{aligned}
\textstyle\sum_i a_i [g_i] + \textstyle\sum_i b_i [g_i] &= \textstyle\sum_i (a_i + b_i)[g_i] ,\\
(\textstyle\sum_i a_i [g_i])(\textstyle\sum_j b_j [g_j]) &= \textstyle\sum_k \sum_{g_ig_j=g_k} (a_ib_j) [g_k].
\end{aligned}
$$
As a vector space over $\Z_2$, the algebra $A$ has dimension $|G| = 2^n$. If  $S$ is a subset of $G$, let $[S] = \sum_{h\in S}[h]\in A$.

Let $\epsilon: A\to \Z_2$ be the \emph{augmentation map},
\begin{equation}\label{augmentation}
\epsilon(\textstyle\sum_i a_i [g_i]) = \textstyle\sum_i a_i,
\end{equation}
and let $\I = \Ker\epsilon$ be the \emph{augmentation ideal}. Consider the filtration of the algebra $A$ by the ideals $\I^p$ for $p\geq 1$,
\begin{equation}\label{Ifiltration}
A\supset \I^1\supset\I^2\supset\I^3\supset\cdots.
\end{equation}

\begin{lem}\label{rank} For each $p\geq 1$ the ideal $\I^p$ is spanned as a vector space by the elements $[H]$ such that $H$ is a subgroup of $G$ and $\operatorname{rank} H = p$. 
\end{lem}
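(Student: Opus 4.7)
The plan is to reduce everything to the identity
\begin{equation*}
\prod_{i=1}^p (1+[g_i]) = \sum_{S\subseteq\{1,\dots,p\}}\Bigl[\prod_{i\in S}g_i\Bigr]
\end{equation*}
holding in the commutative $\Z_2$-algebra $A$, and then to analyze the right-hand side according to whether $g_1,\dots,g_p$ are $\Z_2$-linearly independent in $V$.

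First I would prove the key computation: for any $g_1,\dots,g_p\in G$, the above product equals $[H]$ if $g_1,\dots,g_p$ are $\Z_2$-linearly independent in $V$ (so $H=\langle g_1,\dots,g_p\rangle$ has rank $p$), and equals $0$ otherwise. Expanding the product gives a sum over subsets $S$ of $[g_S]$ with $g_S = \prod_{i\in S}g_i$, so the coefficient of $[h]$ in the result is the cardinality (mod $2$) of the fiber of the map $\Phi\colon 2^{[p]}\to H$, $S\mapsto g_S$. Identifying $2^{[p]}$ with $\Z_2^p$ via indicator vectors, $\Phi$ becomes the affine image of the linear map $\Z_2^p\to V$ sending $e_i$ to $g_i'$, composed with exponentiation. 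All nonempty fibers of $\Phi$ thus have the same cardinality $2^k$, where $k=p-\operatorname{rank}\langle g_1,\dots,g_p\rangle$. In the independent case $k=0$, every element of $H$ is hit exactly once, and the sum is $[H]$; in the dependent case $k\geq 1$ and every fiber has even size, so the sum vanishes in $\Z_2$.

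Next I would deduce the lemma. The augmentation ideal $\I$ is spanned as a $\Z_2$-vector space by the elements $1+[g]$ for $g\in G$ (the element $g=1$ gives $0$, and for $g\neq 1$ the elements $1+[g]$ form a basis of $\I$ by dimension count). Since $\I^p$ is the $p$-fold product ideal, it is spanned by products $\prod_{i=1}^p(1+[g_i])$ with $g_i\in G$. By the key computation, each such product is either $[H]$ for a rank-$p$ subgroup $H$ or is zero. This gives the nontrivial inclusion: $\I^p$ is contained in the span of the $[H]$ with $\operatorname{rank}H=p$. For the reverse inclusion, given such an $H$ choose any $\Z_2$-basis $h_1,\dots,h_p$; by the same computation $[H]=\prod_{i=1}^p(1+[h_i])\in\I^p$ since each factor lies in $\I$.

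The only substantive step is the fiber-counting, and once one sees that $\Phi$ has uniform fiber size $2^k$, both the independent and dependent cases follow at once. A minor sanity check is the edge case $p=1$: the rank-one subgroups of $G$ are precisely $\{1,g\}$ for $g\neq 1$, and the corresponding indicators $[\{1,g\}]=1+[g]$ do indeed span $\I$, consistent with the general statement.
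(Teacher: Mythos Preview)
Your proof is correct and follows essentially the same approach as the paper: both show that $\I^p$ is spanned by products $\prod_{i=1}^p(1+[g_i])$ and that such a product equals $[H]$ for the rank-$p$ subgroup $H=\langle g_1,\dots,g_p\rangle$ when the $g_i$ are independent, and vanishes otherwise. The paper establishes this by a short induction on $p$ (multiplying $[H]$ by $1+[g]$ gives $[\langle H,g\rangle]$ if $g\notin H$ and $0$ if $g\in H$), whereas you expand the full product at once and count fibers; the substance is identical.
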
 

\begin{proof} 
We proceed by induction on $p$. For $p=1$, we have $\alpha \in \I$ if and only if $\alpha = \sum_{g \in S} [g]$, where $|S|$ is even. Then $\alpha = \sum_{1\neq g\in S} ([1] + [g])$, and $[1]+[g] = [\{1,g\}]$, with $\operatorname{rank}\{1, g\} = 1$.

Now suppose $\I^p$ is spanned by the elements $[H]$ with $\operatorname{rank} H = p$. Then $\I^{p+1}$ is spanned by elements of the form $([1] + [g])[H]$. If $g\in H$ then $([1] + [g])[H] = [H]+[H] = 0$. If $g\notin H$ then $([1]+[g])[H] = [K]$, where $K$ is the subgroup of  rank $p+1$ generated by $H$ and $g$.
\end{proof}

\begin{prop}\label{gr} There is a canonical  isomorphism $\Phi:\Lambda^* V\stackrel{\displaystyle \approx}\longrightarrow \operatorname{Gr}_\I A $ of graded algebras which induces vector space isomorphisms $\Lambda^p V \cong \I^p/\I^{p+1}  $ for each $p\geq 1$. Moreover, $\Phi$ is an isomorphism of functors; \emph{i.e.} $\Phi$ is functorial with respect to homomorphisms of the group $G$.
\end{prop}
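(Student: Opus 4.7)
\smallskip

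\noindent\textbf{Proof proposal.} The plan is to build $\Phi$ from a linear map $V \to \I/\I^2$, verify that it extends to the exterior algebra because every generator squares to zero in characteristic $2$, and then conclude by a dimension count using Lemma \ref{rank}.

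First I would define $\phi_1 : V \to \I/\I^2$ by $\phi_1(g') = (1+[g]) \bmod \I^2$. To see this is additive, note that for $g,h\in G$,
$$
(1+[gh]) - \bigl((1+[g])+(1+[h])\bigr) = [gh] + [g] + [h] + 1 = (1+[g])(1+[h]) \in \I^2,
$$
so $\phi_1((gh)') = \phi_1(g') + \phi_1(h')$ in $\I/\I^2$. Next, since we are in characteristic $2$ and $g^2 = 1$,
$$
(1+[g])^2 = 1 + [g^2] = 0 \quad\text{in } A,
$$
hence $\phi_1(v)^2 = 0$ in $\operatorname{Gr}_\I A$ for every $v\in V$. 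By the universal property of the exterior algebra, $\phi_1$ extends uniquely to a graded algebra homomorphism $\Phi : \Lambda^* V \to \operatorname{Gr}_\I A$.

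Next I would show $\Phi$ is a bijection degree by degree. Given $v_1,\dots,v_p\in V$ with $v_i = g_i'$, we have by construction
$$
\Phi(v_1\wedge\cdots\wedge v_p) \;=\; \prod_{i=1}^p (1+[g_i]) \;\pmod{\I^{p+1}}.
$$
If the $v_i$ are linearly independent, the subgroup $H = \langle g_1,\dots,g_p\rangle$ has rank $p$ and a direct expansion (in characteristic $2$) gives $\prod_{i=1}^p(1+[g_i]) = [H]$. Lemma \ref{rank} then says these elements $[H]$ span $\I^p/\I^{p+1}$, so $\Phi_p : \Lambda^p V \to \I^p/\I^{p+1}$ is surjective. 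For the reverse inequality, Lemma \ref{rank} also gives $\I^{n+1} = 0$ (no subgroup of $G$ has rank greater than $n$), hence
$$
\dim_{\Z_2} \operatorname{Gr}_\I A \;=\; \dim_{\Z_2} A \;=\; 2^n \;=\; \sum_{p=0}^n \binom{n}{p} \;=\; \dim_{\Z_2} \Lambda^* V .
$$
A surjection between finite-dimensional graded vector spaces of the same total dimension whose source has graded pieces of dimension at most those of the target (here, $\dim\Lambda^p V = \binom{n}{p}$ and $\dim \I^p/\I^{p+1} \le \binom{n}{p}$ by Lemma \ref{rank}) must be an isomorphism in each degree. Thus $\Phi$ is an isomorphism of graded algebras.

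Finally, functoriality is immediate: a group homomorphism $G \to G_1$ induces on the one hand a $\Z_2$-linear map $V \to V_1$ and hence $\Lambda^* V \to \Lambda^* V_1$, and on the other hand an augmentation-preserving algebra homomorphism $A \to A_1$ which carries $\I$ into $\I_1$ and hence induces $\operatorname{Gr}_\I A \to \operatorname{Gr}_{\I_1} A_1$. Both induced maps send $g' \mapsto (1+[g])$ on generators, so they agree under $\Phi$. The only non-routine step is verifying that $\phi_1$ is well-defined and that the squaring relation holds in $\operatorname{Gr}_\I A$ rather than just in $A$; everything else is dimension counting and Lemma \ref{rank}.
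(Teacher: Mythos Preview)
Your proof is correct and follows essentially the same route as the paper: define $\phi_1(g') = 1+[g]$, check additivity modulo $\I^2$ and that $(1+[g])^2=0$, extend to $\Lambda^* V$ by the universal property, observe that $\Phi$ hits $[H]$ for each rank-$p$ subgroup $H$ so it is surjective by Lemma \ref{rank}, and finish by the dimension count $\dim \Lambda^* V = 2^n = \dim A = \dim \operatorname{Gr}_\I A$ (using $\I^{n+1}=0$). One small slip: your parenthetical claim that Lemma \ref{rank} gives $\dim \I^p/\I^{p+1}\le\binom{n}{p}$ is not justified---there are in general more than $\binom{n}{p}$ subgroups of rank $p$---but it is also unnecessary, since a graded surjection between spaces of the same finite total dimension is automatically an isomorphism in each degree.
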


\begin{proof}
We claim that the function $\phi: V \to \I$ given by $\phi(g') = [1]+[g]$ induces a homomorphism of graded algebras
$$
\Phi: \Lambda^*V \to \operatorname{Gr}_\I A,
$$
with $\Phi(\Lambda^pV) = \I^p/\I^{p+1}$.
We have
$$
\begin{aligned}
\phi(g'+h') &= \phi((gh)') = [1]+ [gh], \\
\phi(g') + \phi(h') &= ([1]+[g])+([1]+[h]) = [g] + [h].
\end{aligned}
$$
Now 
$$
([1] + [gh]) + ([g]+[h]) = ([1]+[g])([1]+[h]) \in \I^2,
$$
so $\phi$ defines an additive homomorphism $V\to \I/\I^2$. Thus the function
$$
\begin{aligned}
\phi_p:\otimes^pV &\to \I^p/\I^{p+1},\\
\phi_p(g_1\otimes\cdots\otimes g_p) &= ([1] + [g_1])\cdots([1] + [g_p]),
\end{aligned}
$$
is multilinear.
Since $([1]+[g])^2 = [1] + 2[g] + [g^2] = 0$ for all $g\in G$, the maps $\phi_p$ define an algebra homomorphism $\Phi: \Lambda^*V \to \operatorname{Gr}_\I A$. If $(g_1',\dots,g_p')$ is a basis for the subspace $H'\subset V$ corresponding to  the subgroup $H\subset G$, then  $([1] + [g_1])\cdots([1]+[g_p]) = [H]$. Thus $\Phi$ is surjective by Lemma \ref{rank}. Since $\dim_{\Z_2} \Lambda^*V = 2^n = \dim_{\Z_2}   \operatorname{Gr}_\I A$, we conclude that $\Phi$ is an isomorphism. 

If $\gamma: G\to H$ is a homomorphism of discrete torus groups, the commutativity of the diagram
\begin{equation*}
\renewcommand{\arraystretch}{1.5} 
\begin{array}[c]{ccc}
V_G&\stackrel{\displaystyle \phi_G}\longrightarrow& \I_G\\ 
\hphantom{\gamma_*}\downarrow{\gamma_*}&&\hphantom{\gamma_*}\downarrow{\gamma_*} \\ 
V_H&\stackrel{\displaystyle \phi_H}\longrightarrow&\I_H
\end{array}
\end{equation*}
implies that $\Phi$ is functorial.
\end{proof}

For $J\subset \{1,\dots,n\}$ let $G(J)=\{g\in G\ |\ g(i) = 1,\ i\notin J\}$. 

\begin{cor}\label{nchoosep} 
For each $p\geq 1$ we have $\dim_{\Z_2} \I^p / \I^{p+1} = \binom {n}{p}$. In particular $\I^{n+1} = 0$.
The subset $\{[G(J)]\ |\ |J| = p\}$ of $\I^p$ maps to a basis of\, $\I^p / \I^{p+1}$.
\end{cor}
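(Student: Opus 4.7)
The plan is to deduce all three statements directly from Proposition \ref{gr}, which gives a functorial graded algebra isomorphism $\Phi: \Lambda^* V \stackrel{\approx}{\to} \operatorname{Gr}_\I A$ with $\Phi(\Lambda^p V) = \I^p/\I^{p+1}$ for $p\geq 1$.

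First I would record the dimension count. Since $V$ is a $\Z_2$-vector space of dimension $n$, the exterior power $\Lambda^p V$ has dimension $\binom{n}{p}$, so $\dim_{\Z_2}\I^p/\I^{p+1}=\binom{n}{p}$ by Proposition \ref{gr}.

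Next I would handle the vanishing $\I^{n+1}=0$. From $\Lambda^{n+1}V=0$ and Proposition \ref{gr} we obtain $\I^{p}=\I^{p+1}$ for every $p\geq n+1$. To turn this stabilization into actual vanishing, I would argue that $\I$ is generated as an ideal by the $n$ elements $\alpha_i = [1]+[e_i]$, where $e_1,\dots,e_n\in G$ are the standard generators (each $e_i$ has value $-1$ only in coordinate $i$). Indeed every generator $[1]+[g]$ of $\I$ can be expanded as a product of $\alpha_i$'s using the formula $[1]+[gh]=([1]+[g])+([1]+[h])+([1]+[g])([1]+[h])$, iterated on a decomposition of $g$ as a product of $e_i$'s. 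Since $\alpha_i^2=[1]+2[e_i]+[e_i^2]=0$, any product of $n+1$ of these generators must repeat some factor and therefore vanishes, so $\I^{n+1}=0$.

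For the basis statement, I would compute $\Phi^{-1}[G(J)]$ explicitly. Let $J=\{j_1,\dots,j_p\}$ and consider the subgroup $G(J)\leq G$, which has rank $p$ with $\Z_2$-basis $e_{j_1}',\dots,e_{j_p}'$ in the associated vector space $V$. Running the computation in the proof of Proposition \ref{gr}, we have
\[
([1]+[e_{j_1}])\cdots([1]+[e_{j_p}])=[G(J)]
\]
in $A$, and by construction of $\Phi$ this element of $\I^p$ represents $\Phi(e_{j_1}'\wedge\cdots\wedge e_{j_p}')$ modulo $\I^{p+1}$. As $J$ ranges over all $p$-subsets of $\{1,\dots,n\}$, the wedges $e_{j_1}'\wedge\cdots\wedge e_{j_p}'$ form the standard basis of $\Lambda^p V$, hence $\{[G(J)]\ |\ |J|=p\}$ maps to a basis of $\I^p/\I^{p+1}$.

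The only point where I expect to pause is checking $\I^{n+1}=0$: the isomorphism in Proposition \ref{gr} only yields $\I^{n+1}=\I^{n+2}=\cdots$, so one has to supplement it with the generation argument above (or invoke that $A=\Z_2[G]$ is a local Artinian ring with maximal ideal $\I$, and apply Nakayama). The nilpotency argument via $\alpha_i^2=0$ is the cleanest and does not require any external machinery.
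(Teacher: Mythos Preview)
Your proof is correct and follows the paper's intended route: the corollary is stated without proof as an immediate consequence of Proposition \ref{gr}, and your deductions of the dimension formula and the basis statement via the standard basis of $\Lambda^p V$ are exactly what is implicit there.

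Your detour for $\I^{n+1}=0$ is sound but unnecessary. Two quicker arguments are available. First, Lemma \ref{rank} already says $\I^{n+1}$ is spanned by $[H]$ with $\operatorname{rank} H = n+1$, and no such subgroup exists in a rank-$n$ group. Second, the dimension count you established gives $\sum_{p=0}^{n}\dim_{\Z_2}\I^p/\I^{p+1}=\sum_{p=0}^{n}\binom{n}{p}=2^n=\dim_{\Z_2}A$, and since this sum equals $\dim_{\Z_2}A/\I^{n+1}$ we get $\I^{n+1}=0$ directly. So the stabilization you worried about is already vanishing, without invoking the nilpotency of the $\alpha_i$ or Nakayama.
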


For $n,m\in \N$, let  $(G_n,\cdot)$ be the group of functions $g: \{1,\dots,n\}\to \{1, -1\}$, and let $ (G_m,\cdot)$ be the group of functions $g: \{1,\dots,m\}\to \{1, -1\}$.  Let  $\gamma:  G_n \to G_m$ be a group homomorphism.  Then $\gamma$ is given by an $m\times n$ matrix $(a_{ij})$ with coefficients in $\Z_2$,
 \begin{equation}\label{matrix}
 \gamma (g_1, \ldots , g_n) = \left (\prod g_i^{a_{i1}}, \ldots , \prod g_i^{a_{im}}\right ).
 \end{equation}
 Let $A(G_n)$ and $A(G_m)$ be the group algebras of $G_n$ and $G_m$, respectively, and let $\I^p (G_n)$ and  $\I^p (G_m)$ be the associated filtrations \eqref{Ifiltration}. The group homomorphism $\gamma$ induces an algebra homomorphism $\gamma_*: A(G_n)\to A(G_m)$ with $\gamma_* (\I(G_n)) \subset \I (G_m)$, and so for all $p \ge 1$ we have $\gamma_* (\I^p(G_n)) \subset \I^p (G_m)$. 

 \begin{cor}\label{morphism} If $\gamma:G_n\to G_m$ is a group homomorphism, the induced linear map 
 $\gamma_*:\I^p (G_n)/ \I^{p+1}(G_n) \to \I^p (G_m)/ \I^{p+1}(G_m) $ is given by a matrix $(a_{IJ})$ with respect to the bases of Corollary \ref{nchoosep}, where 
 for $J\subset \{1, \ldots, n\}$, $I\subset \{1,\ldots , m\}$, $|I|=|J|=p$, 
 $$
a_{I J} = \det (a_{ij}) _{i\in I, j\in J}\ .
$$
 \end{cor}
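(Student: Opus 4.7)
The plan is to invoke functoriality of the isomorphism $\Phi$ of Proposition \ref{gr} to reduce the claim to the standard determinantal formula for the exterior power of a linear map. Write $V_n$ and $V_m$ for the $\Z_2$-vector spaces underlying $G_n$ and $G_m$, with standard bases $e_1,\dots,e_n$ and $f_1,\dots,f_m$ corresponding to the coordinate directions of the discrete tori. Let $\gamma_*:V_n\to V_m$ denote the $\Z_2$-linear map induced by $\gamma$. By the naturality assertion of Proposition \ref{gr}, the map $\gamma_*:\I^p(G_n)/\I^{p+1}(G_n)\to \I^p(G_m)/\I^{p+1}(G_m)$ fits into a commutative square with $\Lambda^p\gamma_*:\Lambda^p V_n\to\Lambda^p V_m$ whose vertical arrows are the graded isomorphisms $\Phi$. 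Hence it suffices to compute $\Lambda^p\gamma_*$ in the appropriate bases.

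First I would read off $\gamma_*$ in coordinates from \eqref{matrix}. Let $g_i\in G_n$ denote the generator with $g_i(i)=-1$ and $g_i(k)=1$ for $k\ne i$, so that $e_i$ corresponds to $g_i$. From \eqref{matrix}, the $j$-th coordinate of $\gamma(g_i)$ is $(-1)^{a_{ij}}$, so in additive notation $\gamma_*(e_i)=\sum_{j=1}^m a_{ij}\,f_j$. Second, I would identify the bases: the proof of Proposition \ref{gr} shows that $\Phi(e_{j_1}\wedge\cdots\wedge e_{j_p})=[G_n(J)]$ modulo $\I^{p+1}(G_n)$ for $J=\{j_1<\cdots<j_p\}$, because $\{e_j\mid j\in J\}$ is a $\Z_2$-basis of the subspace of $V_n$ corresponding to $G_n(J)$; analogously on the target side.

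Finally I would invoke the standard determinantal formula for exterior powers: for any linear map $L:V\to W$ between finite-dimensional $\Z_2$-vector spaces with $L(e_i)=\sum_k\ell_{ik}f_k$, the coefficient of $f_{i_1}\wedge\cdots\wedge f_{i_p}$ in $\Lambda^p L(e_{j_1}\wedge\cdots\wedge e_{j_p})$ is $\det(\ell_{jk})_{j\in J,\,k\in I}$, where $I=\{i_1<\cdots<i_p\}$. Specializing to $\ell_{ij}=a_{ij}$, the $(I,J)$-entry of the matrix of $\Lambda^p\gamma_*$ in these bases is $\det(a_{jk})_{j\in J,\,k\in I}$. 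Since the determinant is invariant under transpose over any commutative ring, this equals $\det(a_{ij})_{i\in I,\,j\in J}=a_{IJ}$, establishing the corollary.

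There is no serious obstacle: the argument is essentially bookkeeping. The only mildly delicate step is reconciling the two index conventions in the determinantal expression, handled by the identity $\det M=\det M^T$.
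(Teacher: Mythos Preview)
Your proof is correct and is precisely the argument the paper intends: the corollary is stated without proof, as an immediate consequence of the functoriality of $\Phi$ in Proposition \ref{gr}, and you have simply spelled out the standard identification of $\Lambda^p\gamma_*$ with the minor matrix. There is nothing to add.
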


\begin{prop}\label{generating}  
For all $p\geq 1$ the vector subspace $\I^p\subset A$ is spanned by the set of translates $\{ [gG(J)]\ |\ g\in G, \ |J| = p\}$.     
\end{prop}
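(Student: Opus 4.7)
The plan is to establish the spanning claim by downward induction on $p$, exploiting the fact that $\I^{n+1}=0$ (Corollary \ref{nchoosep}) as the trivial starting case. Before starting the induction I would verify the easy containment: each element $[gG(J)]$ with $|J|=p$ already lies in $\I^p$, because $[G(J)]\in\I^p$ by Lemma \ref{rank} and $\I^p$ is an ideal of the group algebra $A$, so $[g]\cdot[G(J)]=[gG(J)]\in\I^p$. Hence only the spanning direction requires work.

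For the inductive step, assuming the statement for $p+1$, I would take an arbitrary $\alpha\in\I^p$ and apply Corollary \ref{nchoosep}, which says that the classes $[G(J)]$ with $|J|=p$ form a basis of $\I^p/\I^{p+1}$. Thus $\alpha=\sum_{|J|=p} a_J[G(J)]+\beta$ for some $\beta\in\I^{p+1}$, and by the inductive hypothesis $\beta$ is a $\Z_2$-linear combination of translates $[hG(J')]$ with $|J'|=p+1$.

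The crucial bridge between the two filtration levels is a coset decomposition: for any $J'$ with $|J'|=p+1$ I would pick an $i_0\in J'$, set $J=J'\setminus\{i_0\}$, and let $g_{i_0}\in G$ be the element with $g_{i_0}(i_0)=-1$ and $g_{i_0}(i)=1$ for $i\neq i_0$. A direct check of the defining conditions yields $G(J')=G(J)\sqcup g_{i_0}G(J)$, so
$$
[hG(J')]=[hG(J)]+[hg_{i_0}G(J)]
$$
exhibits every rank-$(p+1)$ coset as a sum of two rank-$p$ cosets of the correct form. Substituting into the expression for $\beta$ writes $\alpha$ as a linear combination of the desired translates $[h G(J)]$ with $|J|=p$, completing the induction.

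I do not anticipate a serious obstacle: once Corollary \ref{nchoosep} provides the mod-$\I^{p+1}$ description and the coset identity $G(J')=G(J)\sqcup g_{i_0}G(J)$ is in hand, the induction runs mechanically and terminates because $\I^{n+1}=0$. The only points requiring a little care are the direction of the induction (downward on $p$, not upward) and the observation that multiplication by a single group element $[g]$, although not itself an element of $\I$, still preserves each power $\I^p$ because $\I^p$ is a two-sided ideal.
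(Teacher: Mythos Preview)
Your proof is correct and follows essentially the same approach as the paper. Both arguments rest on the same coset identity $[G(J')]=[G(J)]+[g_{i_0}G(J)]$ (for $J'=J\cup\{i_0\}$) together with Corollary~\ref{nchoosep}; the paper packages this as showing $F^{p+1}\subset F^p$ and then decomposing $\alpha\in\I^p$ as $\alpha_p+\cdots+\alpha_n$ with $\alpha_l\in E^l\subset F^p$, whereas you organize the same content as a downward induction on $p$---a purely cosmetic difference.
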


\begin{proof}
Let $F^p$ be the subspace of $A$ spanned by $\{[gG(J)]\ |\ g\in G,\ |J|=p\}$. Since $\I^p$ is an ideal of $A$ we have $F^p\subset \I^p$.

Let $E^p\subset F^p$ be the subspace spanned by $\{[G(J)]\ |\ |J|=p\}$. If $|J|=p$ and $|J'|=p+1$, with $J'= J\cup\{i\}$, then $[G(J')] = [G(J)] + [g_iG(J)]$. Thus $E^{p+1}\subset F^p$, and so $F^{p+1}\subset F^p$. Therefore $F^l\subset F^p$ for all $l\geq p$.

It follows that $\I^p\subset F^p$. For if $\alpha\in \I^p$ then by Corollary \ref{nchoosep} we have 
$$
\alpha = \alpha_p + \alpha_{p+1} +\cdots + \alpha_n,\ \ \alpha_l\in E^l\ .
$$ 
Thus for all $l$ we have $\alpha_l\in F^p$, and so $\alpha\in F^p$.
\end{proof}

\begin{prop}\label{translation}
If $g\in G$ let $\psi_g: A\to A$ be the linear isomorphism of $A$ given by translation by $g$, $\psi_g(\alpha) = [g]\cdot\alpha$. For all $p\geq 1$, $\psi_g(\I^p)= \I^p$, and $\psi_g$ induces the identity map on $\I^p/\I^{p+1}$.
\end{prop}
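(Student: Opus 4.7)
The plan is to exploit the multiplicative structure of the group algebra $A$ together with the fact that every element of $G$ has order $2$. The key observation is that $[g]\cdot[g] = [g^2] = [1]$, so $[g]$ is its own inverse in $A$ and hence $\psi_g$ is a linear automorphism of $A$ with inverse $\psi_g$.

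First I would verify the invariance $\psi_g(\I^p) = \I^p$. Since $\I$ is the (two-sided) augmentation ideal, $\I^p$ is an ideal of $A$, so left multiplication by any element preserves $\I^p$; in particular $\psi_g(\I^p) \subset \I^p$. Applying the same inclusion to $\psi_g^{-1} = \psi_g$ gives the reverse inclusion, hence equality.

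For the second assertion, I would use the decomposition $[g] = [1] + ([1] + [g])$, which is valid over $\Z_2$. The element $[1] + [g]$ has augmentation zero, so $[1]+[g] \in \I$ (in fact this is the element $\phi(g')$ appearing in the proof of Proposition \ref{gr}). Consequently, for any $\alpha \in \I^p$,
$$\psi_g(\alpha) \;=\; [g]\cdot \alpha \;=\; \alpha \;+\; ([1]+[g])\cdot\alpha,$$
and the second summand lies in $\I\cdot\I^p = \I^{p+1}$. Therefore $\psi_g(\alpha) \equiv \alpha \pmod{\I^{p+1}}$, so $\psi_g$ induces the identity on $\I^p/\I^{p+1}$.

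There is no serious obstacle here; both claims reduce to the two elementary facts that $\I^p$ is a (two-sided) ideal and that $[g] - [1] = [g]+[1] \in \I$ for every $g\in G$. The only minor point worth flagging is the use of characteristic $2$ to rewrite $[g]$ as $[1] + ([1]+[g])$, which is the reason this proof is cleaner here than for a general group algebra.
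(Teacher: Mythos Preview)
Your proof is correct. The argument for the second assertion is essentially the same computation as the paper's: the paper also uses that $([1]-[g])\cdot\alpha \in \I^{p+1}$, though it checks this only on the basis elements $[G(J)]$ from Corollary~\ref{nchoosep} rather than for general $\alpha\in\I^p$ as you do. Your formulation is slightly more direct since it avoids invoking the basis.

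For the first assertion your route is genuinely more elementary. The paper appeals to Proposition~\ref{generating} (that $\I^p$ is spanned by translates $[gG(J)]$) to get $\psi_g(\I^p)=\I^p$, whereas you use only that $\I^p$ is an ideal and that $[g]$ is a unit in $A$. This sidesteps Proposition~\ref{generating} entirely for the purpose of this proposition, and would work for any group algebra over any coefficient ring, not just the present $\Z_2$ setting.
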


\begin{proof}
We have $\psi_g(\I^p)= \I^p$ by Proposition \ref{generating}.
Let $|J]=p$. If $g\in G(J)$ then $\psi_g[G(J)]  = [G(J)]$. If $g\notin G(J)$ then 
$$
[G(J)]- \psi_g[G(J)]  = ([1]-[g])[G(J)]\in \I^{p+1}.
$$
Thus $\psi_g$ induces the identity map on $\I^p/\I^{p+1}$ by Corollary \ref{nchoosep}.
\end{proof}



\begin{thebibliography}{99}

\bibitem{akbulutking} S. Akbulut, H. King, \emph{Topology of
Real Algebraic Sets}, MSRI Publ. \textbf{25}, Springer-Verlag,
New York 1992.

\bibitem {BFMH} F. Bihan, M. Franz, C. McCrory, J. van Hamel,
\emph{Is every toric variety an M-variety?}, Manuscripta Math. 
\textbf{120} (2006), 217--232.

\bibitem {BCR}  J. Bochnak, M. Coste, M.-F. Roy,
\emph{Real Algebraic Geometry}, Springer-Verlag, Berlin Heidelberg
1998.





\bibitem{deligne} P. Deligne, \emph{Poids dans la cohomologie des
vari\'et\'es alg\'ebriques}, Proc. Int. Cong. Math. Vancouver (1974),
79--85.

\bibitem{deligne2} P. Deligne, \emph{Th\'eorie de Hodge II}, IHES
Publ. Math. \textbf{40} (1971), 5--58.







\bibitem{navarro} F. Guill\'en, V. Navarro Aznar, \emph{Un crit\`ere
    d'extension des foncteurs d\'efinis sur les sch\'emas lisses}, 
 IHES Publ. Math. \textbf{95} (2002), 1--83.
 
\bibitem{maclane} S. MacLane, \emph{Homology}, Springer-Verlag, Berlin 1963.


\bibitem{mccleary} J. McCleary, \emph{A User's Guide to Spectral Sequences} (second edition), Cambridge University Press 2001.




\bibitem{weight}  C. McCrory, A. Parusi\'nski,
\emph{The weight filtration for real algebraic varieties},
in \emph{Topology of Stratified Spaces},  G. Friedman \emph{et al.} eds., MSRI Publ. 58, Cambridge University Press, New York 2011, 121-160.



\bibitem{munkres} J. Munkres, \emph{Elements of Algebraic Topology}, Addison-Wesley,
Menlo Park 1984.

\bibitem{parusinski} A. Parusi\' nski, \emph{Blow-analytic retraction onto the central fiber}, in \emph{Real Analytic and Algebraic Singularities}, T. Fukui \emph{et al.} eds., CRC Res. Notes in Math. Series 381, Chapman and Hall 1998, 43-61. 

\bibitem{peterssteenbrink} C. Peters, J. Steenbrink,
\emph{Mixed Hodge Structures}, Springer-Verlag, Berlin Heidelberg 2008.







\bibitem{totaro} B. Totaro, \emph{Topology of singular algebraic
varieties}, Proc. Int. Cong. Math. Beijing (2002), 533-541.

\end{thebibliography}
\end{document}